\documentclass[11pt,reqno]{amsart}

\usepackage[T1]{fontenc}
\usepackage{lmodern}
\usepackage{microtype}
\usepackage{amsmath,amssymb,mathtools}
\usepackage{mathrsfs}
\usepackage{enumitem}
\usepackage{booktabs}
\usepackage{tikz-cd}
\usepackage[hidelinks]{hyperref}

\numberwithin{equation}{section}

\newtheorem{theorem}{Theorem}[section]
\newtheorem{proposition}[theorem]{Proposition}
\newtheorem{lemma}[theorem]{Lemma}
\newtheorem{corollary}[theorem]{Corollary}
\theoremstyle{definition}

\newtheorem{remark}[theorem]{Remark}

\DeclareMathOperator{\Aff}{Aff}
\DeclareMathOperator{\GL}{GL}
\DeclareMathOperator{\Area}{Area}
\DeclareMathOperator{\Aut}{Aut}
\DeclareMathOperator{\Mod}{Mod}
\DeclareMathOperator{\SO}{SO}
\DeclareMathOperator{\SL}{SL}
\DeclareMathOperator{\PSL}{PSL}
\newcommand{\C}{\mathbb C}
\newcommand{\Hh}{\mathbb H}
\newcommand{\R}{\mathbb R}
\newcommand{\Z}{\mathbb Z}
\newcommand{\cH}{\mathcal H}
\newcommand{\cQ}{\mathcal Q}
\newcommand{\DeltaTri}{\Delta}
\newcommand{\id}{\mathrm{id}}
\newcommand{\PentagonOrbit}{\mathcal O_{\mathrm{pent}}}

\title[Minimal area in genus two]
  {Minimal Hyperbolic Area of Teichm\"uller Curves in Genus Two}

\author{Xiaoyu Su}
\address{Xiaoyu Su: School of Mathematical Sciences, Beijing University of Posts and Telecommunications, Beijing 100876, P. R. China;
	Key Laboratory of Mathematics and Information Networks (Beijing University of Posts and Telecommunications), Ministry of Education, China.}
\email{xiaoyusu@bupt.edu.cn}

\author{Yumin Zhong}
\address{Yumin Zhong: School of Mathematical Sciences, Beijing University of Posts and Telecommunications, Beijing 100876, P. R. China;
	Key Laboratory of Mathematics and Information Networks (Beijing University of Posts and Telecommunications), Ministry of Education, China.}
\email{ymzhong@bupt.edu.cn}

\subjclass[2020]{Primary 30F60; Secondary 30F35, 32G15, 20H10}
\keywords{Teichm\"uller curve, Veech group, quadratic differential,
  translation surface, hyperbolic area, triangle group, genus two}

\begin{document}

\begin{abstract}
We determine the minimum hyperbolic area of Teichm\"uller curves
arising from holomorphic quadratic differentials on closed
Riemann surfaces of genus two.  The minimum is \(3\pi/5\), and it is attained precisely by
quadratic differentials \(q=\omega^2\) for which the translation surface
\((X,\omega)\) lies in the \(\GL_2^+(\R)\)-orbit of the
double-pentagon translation surface. Equivalently, the extremal
projective Veech group is the triangle group
\(\DeltaTri(2,5,\infty)\).  The proof combines a small-area
classification of noncompact hyperbolic orbifolds with a
derivative-preserving affine descent construction for nonsquare
quadratic differentials.  The three possible nonsquare zero patterns
are then excluded by arithmetic, marked-point, and covering
obstructions.
\end{abstract}

\maketitle

\section{Introduction}

Let \(X\) be a closed Riemann surface of genus \(g\), and let
\(q\in H^0(X,\Omega_X^{\otimes2})\) be a nonzero holomorphic quadratic
differential.  Away from its zero set \(\Sigma(q)\), every point has a
\emph{natural coordinate} \(z\) in which \(q=\mathrm{d}z^2\); two such
coordinates differ by \(z\mapsto\pm z+c\).  Thus \(q\) defines a
half-translation atlas and the singular flat metric \(\lvert q\rvert\).
The points of \(\Sigma(q)\) are the singularities of this metric.  A zero
of order \(m\) has cone angle \((m+2)\pi\), and the sum of the zero
orders is \(4g-4\).  The resulting singular flat surface is denoted by
\((X,q)\) and is called a \emph{half-translation surface}.

Two half-translation surfaces \((X,q)\) and \((X',q')\) are isomorphic
if there is a biholomorphism \(\phi:X\to X'\) with \(\phi^*q'=q\).
For positive integers \(m_1,\ldots,m_n\) with
\(m_1+\cdots+m_n=4g-4\), the stratum
\(\cQ_g(m_1,\ldots,m_n)\) consists of the isomorphism classes for which
the zero orders are \(m_1,\ldots,m_n\); their order in the notation is
irrelevant.  A quadratic differential \(q\) is called a \emph{square} if
\(q=\omega^2\) for some holomorphic one-form \(\omega\) on \(X\),
and a \emph{nonsquare} otherwise.  When a stratum contains both
squares and nonsquares,
\(\cQ_g^{\mathrm{ns}}(m_1,\ldots,m_n)\) denotes its nonsquare locus. 

If \(q=\omega^2\) for a nonzero holomorphic one-form \(\omega\), the
natural coordinates are obtained by integrating \(\omega\), and their
transition maps are translations.  The pair \((X,\omega)\) is then a
\emph{translation surface}.  If the zeros of \(\omega\) have orders
\(k_1,\ldots,k_s\), then \(k_1+\cdots+k_s=2g-2\); the corresponding
stratum is denoted by \(\cH_g(k_1,\ldots,k_s)\).  A zero of order \(k\)
has cone angle \(2(k+1)\pi\).

An orientation-preserving homeomorphism \(f:X\to X\) is an
\emph{affine automorphism} of \((X,q)\) if it preserves \(\Sigma(q)\)
and, in natural coordinates, has the form
\[
  w\circ f\circ z^{-1}(\zeta)=A\zeta+b,
\]
where \(A\in\GL_2^+(\R)\), \(b\in\R^2\), and \(\GL_2^+(\R)\) is the
two-by-two orientation-preserving real general linear group.  The group
of such maps is denoted by \(\Aff^+(X,q)\).  Since an affine
automorphism preserves the total flat area, the determinant of \(A\) is \(1\), so \(A\)
lies in the two-by-two real special linear group \(\SL_2(\R)\).  A
change of natural coordinates may replace \(A\) by \(-A\), so its
class is well-defined in the projective special linear group
\(\PSL_2(\R)\).  This gives the projective derivative
homomorphism
\[
  \overline D:\Aff^+(X,q)\longrightarrow\PSL_2(\R).
\]
Its image \(\Gamma(X,q)\) is the \emph{projective Veech group} of
\((X,q)\).

For a translation surface there is no sign ambiguity.  The actual
derivative homomorphism is
\(D:\Aff^+(X,\omega)\to\SL_2(\R)\); its image
\(\SL(X,\omega)\) is the actual Veech group, and its projectivization
is denoted by \(\Gamma(X,\omega)\).

The group \(\GL_2^+(\R)\) acts on translation and half-translation
surfaces by postcomposing their flat charts with real-linear maps.  A
general element changes both the conformal structure and the
differential.  The subgroup \(\SL_2(\R)\) preserves the flat area, while positive
scalar matrices account for the overall rescaling of the
differential.
These actions preserve the zero pattern and hence
act on the strata introduced above.

Let \([X,q]\) denote the isomorphism class of \((X,q)\).  The
stabilizer of this class under the projective action is precisely
\[
  \operatorname{Stab}_{\PSL_2(\R)}[X,q]=\Gamma(X,q).
\]
After choosing a marking of \(X\) and forgetting the differential, the
\(\SL_2(\R)\)-orbit determines a map to the Teichm\"uller space
\(\mathcal T_g\).  The rotation group \(\SO(2)\) does not change the underlying
conformal structure, and the quotient of \(\SL_2(\R)\) by
\(\SO(2)\) is naturally identified with the upper half-plane
\(\Hh\).  The resulting map gives the Teichm\"uller disk
\(\mathcal D_q:\Hh\to\mathcal T_g\) generated by \((X,q)\)
\cite{EarleGardiner1997,Veech1989}.

By \cite{Veech1989,EarleGardiner1997}, the projective Veech group
\(\Gamma(X,q)\) is a discrete subgroup of \(\PSL_2(\R)\), hence a
Fuchsian group acting on the hyperbolic upper half-plane \(\Hh\).
A Fuchsian group \(\Gamma\) is a \emph{lattice} if the hyperbolic
orbifold area \(\Area(\Gamma\backslash\Hh)\) is finite.
We call \((X,q)\) a \emph{Veech surface} when \(\Gamma(X,q)\) is a
lattice; in genus at least two this lattice is necessarily nonuniform
\cite{Veech1989,EarleGardiner1997}. A Veech
surface is \emph{arithmetic} if its projective Veech group is
commensurable, up to conjugacy, with the modular group
\(\PSL_2(\Z)\), and is \emph{nonarithmetic} otherwise. When \(\Gamma(X,q)\) is a lattice, the Teichm\"uller disk descends
to an isometric orbifold immersion of
\(\Gamma(X,q)\backslash\Hh\) into \(\mathcal M_g\).
We call this immersed orbifold curve the \emph{Teichm\"uller curve}
generated by \((X,q)\), and define its hyperbolic area to be
\(\Area(\Gamma(X,q)\backslash\Hh)\). 

Earle and Gardiner considered the genus-two Riemann surface
\(X_5:y^2=x^5-1\) and the quadratic differentials
\[
  q_1=\left(\frac{\mathrm{d}x}{y}\right)^2,\qquad
  q_2=\left(\frac{x\,\mathrm{d}x}{y}\right)^2.
\]
They showed that the associated projective Veech groups are conjugate
in \(\PSL_2(\R)\) to the triangle groups
\(\DeltaTri(2,5,\infty)\) and \(\DeltaTri(5,\infty,\infty)\),
respectively; see \cite[Sections~10--11]{EarleGardiner1997}.  The
precise convention for \(\DeltaTri(p,q,r)\) is given in
Section~\ref{sec:prelim}.  The corresponding translation surfaces are the
double-pentagon and regular-decagon surfaces.  The orbifold
Gauss--Bonnet formula gives
\[
  \Area\bigl(\Gamma(X_5,q_1)\backslash\Hh\bigr)=\frac{3\pi}{5},
  \qquad
  \Area\bigl(\Gamma(X_5,q_2)\backslash\Hh\bigr)=\frac{8\pi}{5}.
\]

Motivated by these examples, Earle and Gardiner asked whether the
double-pentagon Teichm\"uller curve has the smallest possible
hyperbolic area among those arising from closed Riemann surfaces of
genus two.  More generally, they asked for the minimum in every genus
\cite[p.~166]{EarleGardiner1997}.  Equivalently, let \(m_g\) be the
infimum of \(\Area(\Gamma(X,q)\backslash\Hh)\) over all nonzero
holomorphic quadratic differentials on closed genus-\(g\) Riemann
surfaces whose projective Veech groups are lattices.  The problem is to
determine \(m_g\) and classify the equality cases.

The present paper settles the genus-two case.

\begin{theorem}\label{thm:main}
  Let \((X,q)\) be a half-translation surface of genus two. Suppose that the Veech group
  \(\Gamma(X,q)\) is a lattice.  Then
  \[
    \Area\bigl(\Gamma(X,q)\backslash\Hh\bigr)\geq\frac{3\pi}{5}.
  \]
  Equality holds if and only if \(q=\omega^2\) for a holomorphic
  one-form \(\omega\) and the translation surface \((X,\omega)\) lies
  in the \(\GL_2^+(\R)\)-orbit of the double-pentagon translation
  surface.  In this case, \(\Gamma(X,q)\) is conjugate in
  \(\PSL_2(\R)\) to \(\DeltaTri(2,5,\infty)\).
\end{theorem}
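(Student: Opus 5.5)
The proof begins with the abstract orbifold classification.  A nonuniform lattice \(\Gamma\subset\PSL_2(\R)\) has signature \((h;m_1,\dots,m_k;c)\) with \(c\ge1\) cusps.  Gauss--Bonnet gives
\[
  \Area(\Gamma\backslash\Hh)=2\pi\Bigl(2h-2+c+\sum_i\bigl(1-\tfrac1{m_i}\bigr)\Bigr).
\]
I would list every signature with area \(<3\pi/5\) or \(=3\pi/5\).  With \(h=0\) and \(c=1\) one needs at least two cone points.  The case \((2,3,\infty)\) has area \(\pi/3\).  The cases \((2,m,\infty)\) have area \(\pi(1-2/m)\), which is \(\le 3\pi/5\) exactly for \(m\le5\): these are \(m=3,4,5\), with \(m=5\) giving equality.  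The case \((3,3,\infty)\) has area \(2\pi/3>3\pi/5\), and any three cone points are already too big.  With \(c\ge2\) the area is at least \(\pi\).  So a minimizer or counterexample must have \(\Gamma(X,q)\) conjugate to \(\DeltaTri(2,3,\infty)\), \(\DeltaTri(2,4,\infty)\), or \(\DeltaTri(2,5,\infty)\).

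\textbf{Square case.}  When \(q=\omega^2\), \(\Gamma(X,\omega)\) is the projectivized Veech group of a genus-two translation surface, so it lies in \(\cH(2)\) or \(\cH(1,1)\).  I would use McMullen's classification of genus-two Teichm\"uller curves.
\begin{itemize}[label=\(\cdot\)]
\item Arithmetic (square-tiled) curves: here \(\Gamma\) is commensurable with \(\PSL_2(\Z)\), so it could only be \(\DeltaTri(2,3,\infty)=\PSL_2(\Z)\), since \(\DeltaTri(2,4,\infty)\) and \(\DeltaTri(2,5,\infty)\) are not both arithmetic and of the right index.  A one-cusp Veech group equal to \(\PSL_2(\Z)\) would force the surface to be the torus, or \(n=1\) tiled.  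In genus two the minimal square-tiled surfaces have three squares and Veech group of index \(\ge3\) in \(\PSL_2(\Z)\).  Hence the area is at least \(\pi\).
\item Nonarithmetic curves: these are the Weierstrass curves \(W_D\) in \(\cH(2)\) together with the decagon in \(\cH(1,1)\).  Among them, the only one with a triangle group of the listed types is the golden-table curve \(W_5\), whose Veech group is \(\DeltaTri(2,5,\infty)\).  \(\DeltaTri(2,4,\infty)\) would require the trace field \(\mathbb Q(\sqrt2)\), i.e.\ the octagon, which has genus two only via the \(\cH(2)\) regular octagon with group \(\DeltaTri(4,\infty,\infty)\) and area \(3\pi/2\); this has too many cusps.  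Indeed, one-cusp curves in genus two occur only for \(D=5\) by Mukamel/McMullen cusp counts.
\end{itemize}
This gives equality exactly on the \(\GL_2^+(\R)\)-orbit of the double pentagon.

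\textbf{Nonsquare case.}  Genus-two nonsquare strata are \(\cQ(4)\), \(\cQ(2,2)^{\mathrm{ns}}\), \(\cQ(2,1,1)\), and \(\cQ(1,1,1,1)\).  The first two are empty or not relevant in the nonsquare sense via Masur--Smillie: \(\cQ(4)\) and \(\cQ(3,1)\) are empty, which leaves \(\cQ(2,1,1)\), \(\cQ(1,1,1,1)\), and \(\cQ(2,2)^{\mathrm{ns}}\).  Passing to the canonical double cover \(\hat X\to X\) gives \(\hat\omega^2\) pulling back \(q\).  Affine maps lift, up to the deck involution, with the same projective derivative, so \(\Gamma(X,q)\) is commensurable to \(\Gamma(\hat X,\hat\omega)\).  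I would instead use an affine descent: the hyperelliptic involution commutes with everything, and \(q\) descends to \(\mathbb{P}^1\) with prescribed poles and zeros.  The Veech group equals that of the genus-zero quadratic differential with marked Weierstrass points.  Then I rule out each zero pattern separately:
\begin{itemize}[label=\(\cdot\)]
\item \(\DeltaTri(2,3,\infty)\) forces arithmeticity and a square-tiled structure, where a parity/marked-point count fails.
\item The elliptic points of order \(4\) or \(5\) must come from affine automorphisms of finite order, which must permute the singularities with the matching rotation.  Such a rotation fixes a cone point or a center, and the cone angles \(4\pi,3\pi,3\pi\) or \(3\pi\times4\) are incompatible with rotation by \(2\pi/5\) or \(\pi/2\) fixing points, except in configurations that produce a cover with too many cusps.
\end{itemize}

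The main obstacle is this nonsquare exclusion, particularly \(\DeltaTri(2,5,\infty)\) in \(\cQ(1,1,1,1)\) and \(\cQ(2,2)^{\mathrm{ns}}\).  There I would compare the double cover's Teichm\"uller curve in genus three or higher via a covering-degree area inequality: its area would be a multiple of \(3\pi/5\) bounded by the covering index.  I would then check that this is incompatible with the known small-area classification.
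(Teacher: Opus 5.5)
Your orbifold classification is correct, and the nonarithmetic square branch can be made rigorous by citing McMullen's classification with the cusp or Euler-characteristic counts for \(W_D\); the paper's own remark points to this route. There are, however, two genuine gaps.

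The first gap is in the arithmetic square branch. Everything there reduces to showing that no genus-two square-tiled surface has projective Veech group exactly \(\PSL_2(\Z)\). (Note that \(\DeltaTri(2,4,\infty)\) is in fact arithmetic. It is excluded there because its covolume \(\pi/2\) is not an integer multiple of \(\pi/3\), so it is not a finite-index subgroup of a conjugate of \(\PSL_2(\Z)\).) Your justification does not prove this. The claim that a Veech group equal to \(\PSL_2(\Z)\) forces the torus is false in general: the genus-three Eierlegende Wollmilchsau has Veech group \(\SL_2(\Z)\). The index bound for three-square surfaces also says nothing about surfaces with more squares, which is where a counterexample would have to live. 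This is exactly the square half of Theorem~\ref{thm:modular}, and it needs a genuinely genus-two argument. In \(\cH_2(2)\) the paper uses Hubert--Leli\`evre's obstruction to projective elliptic elements of order three. In \(\cH_2(1,1)\) it takes \(F\) with \(DF\) the rotation by \(\pi/2\) and observes that \(F^2\circ h_X\) has derivative \(I\), so it must fix both simple zeros. (A nontrivial automorphism with derivative \(I\) exchanging them would act freely, which Riemann--Hurwitz forbids in genus two.) But \(F^2\) fixes the zeros and \(h_X\) exchanges them, a contradiction.

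The second gap is that the nonsquare locus is not actually argued. Your descent to \(\mathbb P^1\) with marked points is legitimate but is never used, and ``a parity/marked-point count fails'' excludes \(\PSL_2(\Z)\) in no stratum. The rotation heuristic fails as stated. An order-four projective elliptic acts as a flat rotation by \(\pi/4\), not \(\pi/2\). For order five there is no local obstruction at all: at a fixed point where \(q\) has order \(m\in\{0,1,2\}\), the conditions \(f(u)=\lambda u\), \(\lambda^{m+2}=e^{2i\theta}\), \(\lambda^5=1\) are always solvable, since \(\gcd(m+2,5)=1\).

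Your closing comparison is backwards. Since \(\Gamma(X,q)\subseteq\Gamma(Y,\alpha)\), the cover's area is \(3\pi/5\) divided by the index. Once corrected, it only forces \(\Gamma(Y,\alpha)=\DeltaTri(2,5,\infty)\) on a translation surface of genus \(3\), \(4\), or \(5\), where no small-area classification exists.

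What is missing are the paper's actual tools:
\begin{enumerate}
\item For \(\DeltaTri(2,4,\infty)\): the corrected covolume argument on the orientation cover, combined with Hubert--Schmidt. A strict inclusion gives area at most \(\pi/4<\pi/3\).
\item For \(\cQ_2^{\mathrm{ns}}(2,2)\) and \(\cQ_2(2,1,1)\): the Vasilyev quotient, to a torus and to \(\cH_2(2)\) with two marked non-Weierstrass branch values, respectively. This gives arithmeticity via Gutkin--Judge and M\"oller's periodic-point theorem, which excludes the nonarithmetic \(\DeltaTri(2,5,\infty)\). It also transfers an order-three elliptic to \(\cH_2(2)\).
\item For \(\cQ_2^{\mathrm{ns}}(2,2)\) and \(\cQ_2(1,1,1,1)\): finite-order lifts to \((Y,\alpha)\) commuting with \(\iota\) and \(\tau\). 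These must fix every zero, and a contradiction then follows either from the local action at a fixed zero or from Riemann--Hurwitz.
\end{enumerate}
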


The following obstruction is a key step in the proof.

\begin{theorem}\label{thm:modular}
  Let \((X,q)\) be a half-translation surface of genus two.  Then
 the Veech group \(\Gamma(X,q)\) is not conjugate in \(\PSL_2(\R)\) to the modular
  group \(\PSL_2(\Z)\).
\end{theorem}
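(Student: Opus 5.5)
We argue by contradiction. Suppose \(\Gamma(X,q)\) is conjugate to \(\PSL_2(\Z)\). After applying an element of \(\SL_2(\R)\), which changes \((X,q)\) only within its orbit and conjugates the Veech group, we may assume \(\Gamma(X,q)=\PSL_2(\Z)\) exactly. We then split into the square case \(q=\omega^2\) and the nonsquare case.

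\emph{Square case.} Here \((X,\omega)\) lies in \(\cH_2(2)\) or \(\cH_2(1,1)\). Its Veech group is arithmetic, so by Gutkin--Judge the surface is square-tiled after normalization. We may take the periods of \(\omega\) to generate a lattice \(\Lambda\subset\R^2\) that \(\SL(X,\omega)\) preserves. Since \(\SL(X,\omega)\) projects onto \(\PSL_2(\Z)\), this forces \(\Lambda\) to be homothetic to \(\Z^2\). Then \((X,\omega)\) is an \(n\)-square-tiled surface, that is, a branched cover \(p\colon X\to T=\R^2/\Z^2\) branched over the origin only. Its Veech group is a finite-index subgroup of \(\SL_2(\Z)\), and it equals all of \(\SL_2(\Z)\) up to \(\pm\) precisely when the \(\SL_2(\Z)\)-orbit of the square-tiled surface (equivalently, of its monodromy pair \((h,v)\) in \(S_n\) up to simultaneous conjugation) is a single point.

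We rule this out. If \(n=1\), then \(X=T\) has genus one, which is impossible. For \(n\ge 2\), recall that in genus two the orbits of primitive \(n\)-square-tiled surfaces are classified by McMullen and Hubert--Lelièvre. In \(\cH(2)\) there are one or two orbits, of sizes growing roughly like \(n^2\); in particular no orbit is a singleton, since for \(n\ge3\) the orbit contains surfaces with both one and two horizontal cylinders. We can also avoid the classification and give a direct argument. The element \(T=\begin{pmatrix}1&1\\0&1\end{pmatrix}\) must fix the surface, and so must the rotation \(S\) by \(\pi/2\). Fixing \(S\) means \((X,\omega)\) has an affine automorphism with derivative of order four. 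Combined with the order-three element, \(X\) would carry automorphisms acting as \(i\) and \(e^{2\pi i/3}\) on \(\omega\). This gives a cyclic subgroup of order \(12\) acting faithfully on the line \(\C\omega\), fixing the zero set. In genus two, the automorphisms acting on a holomorphic form by a primitive \(12\)th root of unity are impossible: the maximal such order is \(10\), realised by \(y^2=x^5-1\), and by Riemann--Hurwitz no genus-two curve has an automorphism of order \(12\). Even without passing to elements of order \(12\), the elements of orders \(4\) and \(6\) in \(\SL(X,\omega)\) would lift to automorphisms of \(X\) generating a group containing \(\SL_2(\Z)\)-torsion of both types at one point of \(\Hh\). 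But \(i\) and \(e^{\pi i/3}\) are distinct orbifold points, so this needs care. The cleaner route is the orbit-size argument: a \(\PSL_2(\Z)\)-invariant square-tiled surface has a single cylinder decomposition type in every rational direction. One checks with small monodromy computations, using \(n\) minimal with \(h,v\) generating a transitive group, that the one-cylinder and multi-cylinder diagrams are exchanged by \(S\) or \(T\). We also handle the case where \(\Lambda\) is a proper sublattice, that is, non-primitive square-tiled surfaces, by reducing to the primitive case.

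\emph{Nonsquare case.} Here \(q\) lies in \(\cQ_2(4)\), \(\cQ_2^{\mathrm{ns}}(2,2)\), \(\cQ_2(1,1,2)\), or \(\cQ_2(1,1,1,1)\). We pass to the canonical double cover \(\pi\colon(\hat X,\hat\omega)\to(X,q)\). Affine automorphisms lift, with derivative well-defined in \(\SL_2(\R)\) up to the deck involution, so \(\Gamma(\hat X,\hat\omega)\) is \(\PSL_2(\Z)\) or a subgroup of index at most \(2\). Arithmeticity again makes \((X,q)\) a pillowcase cover, that is, a cover of \(\mathbb{P}^1\) with the quadratic differential of the pillowcase, branched over the four corners. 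Its Veech group contains \(\PSL_2(\Z)\) only if the combinatorial datum is invariant under the \(\SL_2(\Z)\) action on pillowcase covers. We then compare the corner ramification forced by the zero orders with the permutation action of \(\PSL_2(\Z)\) on the four corners, which is the action of \(S_3\) on the three nonzero half-periods. Invariance under the full \(\PSL_2(\Z)\) forces the ramification profiles over the three non-origin corners to coincide. Counting cone angles, the zeros of \(q\) sit only over corners, and poles of \(\mathbb{P}^1\) lifting to regular points force even ramification; for each of the four strata this yields a numerical contradiction with total zero order \(4\).

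The hardest step is the square case in \(\cH(2)\) and \(\cH(1,1)\), namely showing that no square-tiled genus-two surface is invariant under both \(S\) and \(T\). My first attempt would be the automorphism-order argument: a Veech group containing \(\pm\) the order-four and order-six elliptic classes yields automorphisms of \(X\) acting on \(\omega\) by \(i\) and by \(e^{\pi i/3}\). From these I would try to extract a contradiction with the known list of genus-two automorphism groups, checking whether a common fixed point can be assumed.
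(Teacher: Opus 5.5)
\textbf{The nonsquare case fails.} The corner-profile count can never produce a contradiction. Take a degree-$d$ pillowcase cover $p\colon X\to\mathbb P^1$ branched only over the corners. Every corner preimage has ramification $e\ge 2$ and carries a zero of order $e-2$. Riemann--Hurwitz with $g(X)=2$ forces the number $N$ of corner preimages to be $2d-2$, and then $\sum(e-2)=4d-2N=4$ holds automatically. So nothing is left to contradict.

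Concretely, in $\cQ_2(1,1,1,1)$ take $d$ odd and give each corner one preimage with $e=3$ and all others $e=2$; for $d=3$ this is the cyclic triple cover totally ramified over the four corners. All four profiles agree, so no permutation of the corners can detect anything, whether through $\SL_2(\Z)\to\SL_2(\mathbb Z/2)\cong S_3$ or including half-period translations. Likewise, all-$2$ profiles over the three nonzero half-periods, with $(4,4,2,\dots,2)$ or $(4,3,3,2,\dots,2)$ over the origin, give $S_3$-invariant data with zero patterns $(2,2)$ and $(2,1,1)$.

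You need information finer than ramification profiles, and the paper gets it on the orientation cover $(Y,\alpha)$:
\begin{enumerate}
\item For $\cQ_2(2,1,1)$ it passes to the Vasilyev quotient in $\cH_2(2)$ and applies the order-three obstruction.
\item For $\cQ_2^{\mathrm{ns}}(2,2)$, an order-three affine lift $F$ commuting with the free involution $\iota$ must fix all four zeros of $\alpha$. Riemann--Hurwitz on the genus-three $Y$ then forces exactly $5$ fixed points, yet $\iota$ acts freely on $\operatorname{Fix}(F)$, so that number must be even.
\item For $\cQ_2(1,1,1,1)$, a Sylow $3$-subgroup of $D^{-1}(\langle A\rangle)$ centralizes the permutation $(12)(34)$ induced by $\tau$, hence fixes the four double zeros. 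A local computation at a zero plus Riemann--Hurwitz on the genus-five $Y$ finishes.
\end{enumerate}
Two minor points: Lemma~\ref{lem:lift} gives $\Gamma(X,q)\subseteq\Gamma(\hat X,\hat\omega)$, the reverse of what you wrote; and $\cQ_2(4)$ contains no nonsquare differentials.

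\textbf{The square case is also unproved,} as you acknowledge.

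For $\cH_2(2)$, citing the Hubert--Leli\`evre/McMullen orbit sizes would do. But an order-three lift alone already suffices. Conjugated into $\SO(2)$, it gives a conformal $F$ fixing the double zero with $F^*\omega=\zeta_3\omega$. In a coordinate where $\omega=u^2\,\mathrm{d}u$ this means $F(u)=\lambda u+\cdots$ with $\lambda^3=\zeta_3$. So $F$ is an automorphism of order $9$, which does not exist in genus two. This is the Hubert--Leli\`evre obstruction the paper cites.

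For $\cH_2(1,1)$ you have no argument.
\begin{enumerate}
\item Combining the order-four and order-six elements fails for exactly the reason you flag. They become rotations at the distinct points $i$ and $e^{\pi i/3}$ of the Teichm\"uller disk, i.e.\ on different Riemann surfaces, so there is no cyclic group of order $12$ acting on one $X$ to exclude.
\item ``Small monodromy computations'' cannot cover all $n$.
\end{enumerate}

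The missing idea is to pair the order-four element with the hyperelliptic involution on the same surface. Choose $F$ with $DF=\left(\begin{smallmatrix}0&-1\\1&0\end{smallmatrix}\right)$.
\begin{enumerate}
\item $F^2$ fixes each of the two simple zeros, since $F$ permutes a two-element set.
\item $h_X$ swaps them, because a zero fixed by $h_X$ with $h_X^*\omega=-\omega$ has even order.
\item $D(F^2h_X)=I$.
\end{enumerate}
A nontrivial automorphism with derivative $I$ that swaps the zeros has no fixed points. A free finite-order automorphism of a genus-two surface is ruled out by Riemann--Hurwitz, since $2=n(2g'-2)$ has no solution with $n\ge 2$.
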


\begin{remark}
Several ingredients relevant to genus two are already known.  Veech's
regular-polygon construction gives the double-pentagon and
regular-decagon examples \cite{Veech1989,Veech1992}.  McMullen
classified the primitive Teichm\"uller curves generated by holomorphic
one-forms in genus two, and Bainbridge computed their Euler
characteristics \cite{McMullen2005Spin,McMullen2005,Bainbridge2007}.
M\"oller classified periodic points on primitive Veech surfaces in
genus two \cite{Moller2006}, and a broader classification of
equivariant point markings was obtained recently by Apisa
\cite{Apisa2025}.  For nonsquare quadratic differentials, the quotient
construction used below originates in work of Vasilyev
\cite{Vasilyev2005}.  We give a self-contained genus-two proof of the
precise ramification and equivariance statements required here.  We
also use Hubert--Leli\`evre's obstruction to projective elliptic
elements of order three in \(\cH_2(2)\)
\cite{HubertLelievre2006}.
\end{remark}

The proof separates the hyperbolic and geometric parts of the problem.
Orbifold Gauss--Bonnet first classifies the noncompact finite-area
hyperbolic orbifolds of area at most \(3\pi/5\).  Below the proposed
bound, only the modular group and the Hecke triangle group
\(\DeltaTri(2,4,\infty)\) can occur.  The latter is excluded on the
translation locus by a result of Hubert and Schmidt, and on the
nonsquare locus by passing to the canonical orientation double cover.
This reduces the lower bound to Theorem~\ref{thm:modular}.

The square locus consists of \(\cH_2(2)\) and \(\cH_2(1,1)\).  The
first is excluded by Hubert--Leli\`evre's order-three obstruction; the
second is excluded by comparing a square root of \(-I\) with the action
of the hyperelliptic involution on the two simple zeros.  In the
nonsquare case, the possible zero patterns are
\[
  (2,2),\qquad (2,1,1),\qquad (1,1,1,1).
\]
The Vasilyev quotient construction associates to these cases
translation surfaces of genus one, two, and three.  Every affine
automorphism descends with the same projective derivative while
preserving the relevant branch and involution data.  The stratum
\(\cQ_2(2,1,1)\) is excluded by the order-three obstruction on its
genus-two quotient.  In the other two nonsquare strata we work on the
canonical orientation cover: finite-order lifts of elliptic elements,
together with their action on the zeros and the Riemann--Hurwitz
formula, give the required contradictions.  The same method with an
order-five element excludes the remaining nonsquare equality case.  On
the square locus, an order-five elliptic element determines the
underlying Riemann surface and the relevant eigenform.

\medskip
\noindent\textbf{Organization of the paper.}
Section~\ref{sec:prelim} fixes the conventions for projective
derivative groups, triangle groups, and affine groups with regular
marked points.  Section~\ref{sec:covers} develops the canonical
orientation double cover and the Vasilyev quotient.
Section~\ref{sec:low-area} contains the low-area orbifold reduction.
Section~\ref{sec:quotients} records the quotient data for the three
nonsquare strata and proves the affine descent and arithmeticity
statements.  Section~\ref{sec:modular} proves
Theorem~\ref{thm:modular}, Section~\ref{sec:equality} treats the
equality case, and Appendix~\ref{app:quotients} proves the precise
algebraic quotient data used in the argument.

\section{Preliminaries}
\label{sec:prelim}

\subsection{Projective groups and triangle groups}

All hyperbolic area calculations in this paper use projective
derivative groups in \(\PSL_2(\R)\).  When an actual derivative group
in \(\SL_2(\R)\) is required, this will be stated explicitly.

For \(p,q,r\in\{2,3,\ldots,\infty\}\) satisfying
\(1/p+1/q+1/r<1\), with \(1/\infty=0\), let
\(T(p,q,r)\subset\Hh\) be a hyperbolic triangle with angles
\(\pi/p,\pi/q,\pi/r\).  Reflections in the sides of
\(T(p,q,r)\) generate a discrete subgroup of the full isometry group
of \(\Hh\).  Throughout this paper, \(\DeltaTri(p,q,r)\) denotes its
orientation-preserving subgroup of index two, equivalently the subgroup
consisting of even words in the side reflections.  Thus
\[
  \DeltaTri(p,q,r)<\PSL_2(\R),
\]
and a fundamental domain for \(\DeltaTri(p,q,r)\) is the union of two
adjacent copies of \(T(p,q,r)\).  An entry \(\infty\) denotes an ideal
vertex and hence a cusp in the quotient.  We call \((p,q,r)\) the
\emph{triangle type}, reserving the term \emph{orbifold signature} for
notation such as \((0;p,q;1)\).  In particular,
\[
  \DeltaTri(2,3,\infty)\cong\PSL_2(\Z).
\]

\subsection{Affine groups with regular marked points}

We shall also consider translation and half-translation surfaces
equipped with finitely many regular marked points.  Let \((S,\xi)\)
be such a surface, and let \(\Sigma(\xi)\) denote its singular set.
A \emph{marked translation surface} or \emph{marked half-translation
	surface} is a triple
\((S,\xi;P)\), where \(P\) is a finite subset of
\(S\setminus\Sigma(\xi)\).

An orientation-preserving affine automorphism of \((S,\xi;P)\) is an
affine automorphism of the underlying flat surface that preserves the
marking \(P\) setwise.  Thus
\[
\Aff^+(S,\xi;P)
:=
\{F\in\Aff^+(S,\xi):F(P)=P\},
\]
and
\[
\Gamma(S,\xi;P)
:=
\overline D\bigl(\Aff^+(S,\xi;P)\bigr).
\]
Unless explicitly stated otherwise, finite markings are always
preserved setwise rather than pointwise.  The group
\(\Gamma(S,\xi;P)\) will be called the projective Veech group of the
marked surface.

We occasionally retain an additional affine involution \(j\) as part
of the marked data.  In that case, we restrict to affine
automorphisms commuting with \(j\) and write
\[
\Aff^+(S,\xi;P,j)
:=
\{F\in\Aff^+(S,\xi;P):F\circ j=j\circ F\},
\]
and 
\[\Gamma(S,\xi;P,j)
:=
\overline D\bigl(\Aff^+(S,\xi;P,j)\bigr).
\]
When \(P=\varnothing\), we abbreviate these groups to
\(\Aff^+(S,\xi;j)\) and \(\Gamma(S,\xi;j)\), respectively.

\section{Canonical double covers and the Vasilyev quotient}
\label{sec:covers}

This section recalls the canonical orientation double cover and
establishes the functoriality under affine automorphisms that will be
used throughout the paper.  In genus two, the hyperelliptic
involution supplies a second commuting involution.  The quotient by
its distinguished lift gives the Vasilyev translation quotient.  The
stratum-by-stratum numerical data are stated in
Section~\ref{sec:quotients} and proved in
Appendix~\ref{app:quotients}.

\subsection{The canonical orientation double cover}

Let \((X,q)\) be a closed half-translation surface.  The
\emph{canonical orientation double cover} of \((X,q)\) is obtained by
normalizing the curve in the total space of the holomorphic cotangent
bundle \(\Omega_X\) defined by the equation
\(\lambda^{\otimes2}=q\); see
\cite[p.~175]{DouadyHubbard1975}.  It consists of a closed Riemann
surface \(Y\), a holomorphic double cover
\(\pi_{\mathrm{or}}:Y\to X\), and a holomorphic one-form
\(\alpha\) on \(Y\) satisfying
\[
\pi_{\mathrm{or}}^*q=\alpha^2.
\]
The cover is ramified precisely over the odd-order zeros of \(q\), and
it is connected precisely when \(q\) is not a global square.  The
nontrivial deck involution \(\iota\) exchanges the two local square
roots of \(q\), and hence
\[
\iota^*\alpha=-\alpha.
\]

We shall need the following functoriality statement.  Although the
construction of the orientation cover is standard, we include the
argument because the precise behavior of affine maps and their
projective derivatives will be used repeatedly.

\begin{lemma}
	\label{lem:lift}
	Let \((X,q)\) be a half-translation surface for which \(q\) is not a
	global square, and let
	\[
	\pi_{\mathrm{or}}:(Y,\alpha)\longrightarrow(X,q)
	\]
	be its connected canonical orientation double cover, with deck
	involution \(\iota\).  Every affine automorphism of \((X,q)\) has two
	affine lifts to \((Y,\alpha)\).  These lifts differ by \(\iota\),
	commute with \(\iota\), and have the same projective derivative as the
	original automorphism.  Consequently,
	\[
	\Gamma(X,q)
	\subseteq
	\Gamma(Y,\alpha;\iota)
	\subseteq
	\Gamma(Y,\alpha).
	\]
\end{lemma}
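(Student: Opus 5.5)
The plan is to build the lifts directly from the intrinsic description of \(Y\) as the normalized square-root curve in \(\Omega_X\), and then to read off their derivatives in natural coordinates.

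\emph{Construction of the lifts.} Let \(f\in\Aff^+(X,q)\) with local derivative \(\pm A\), \(A\in\SL_2(\R)\). Work first over \(X^*=X\setminus\Sigma(q)\). There, a point of \(Y^*=\pi_{\mathrm{or}}^{-1}(X^*)\) is a pair \((x,\lambda)\) with \(\lambda\in T_x^*X\) and \(\lambda^2=q_x\). Equivalently, it is a natural coordinate germ \(z\) at \(x\) modulo translation, with \(\lambda=\mathrm dz\); replacing \(z\) by \(-z\) gives \(\iota\).

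Fix a natural coordinate germ \(z\) at \(x\) and a natural coordinate germ \(w\) at \(f(x)\). Then \(w\circ f\circ z^{-1}(\zeta)=B\zeta+b\) with \(B=\pm A\). Fix a choice of sign, say \(A\), once and for all at a base point, and define
\[
\tilde f(x,\mathrm dz)=(f(x),\mathrm dw),
\]
where \(w\) is the germ at \(f(x)\) for which \(w\circ f\circ z^{-1}\) has linear part exactly \(A\), not \(-A\).

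This rule is well defined. Indeed, replacing \(z\) by \(-z\) changes the linear part to \(-A\), which forces \(w\) to be replaced by \(-w\). Hence \(\tilde f\circ\iota=\iota\circ\tilde f\). The rule is also continuous, since it is locally given by analytic continuation of coordinate germs.

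The other choice of sign, \(-A\), produces exactly \(\iota\circ\tilde f\). These are the only continuous lifts of \(f|_{X^*}\), because \(Y^*\to X^*\) is a connected double cover: a lift is determined by its value at one point. By construction, \(\alpha\) is \(\mathrm dz\) in the chart \(z\), so in the charts \(z,w\) the lift \(\tilde f\) has derivative \(A\). Thus it is affine for \(\alpha\) with projective derivative \(\overline D(f)\).

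Connectedness of \(Y\) (since \(q\) is nonsquare) is what guarantees that \(\tilde f\) and \(\iota\tilde f\) are distinct. It also guarantees they exhaust all lifts, since \(\iota\) acts freely on \(Y^*\).

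\emph{Extension over singularities.} \(\tilde f\) is a homeomorphism of the punctured surface \(Y^*=Y\setminus\pi_{\mathrm{or}}^{-1}(\Sigma(q))\). Because \(f\) permutes \(\Sigma(q)\) and preserves zero orders (it preserves cone angles), a neighborhood of a puncture over \(p\in\Sigma(q)\) is mapped into a neighborhood of a puncture over \(f(p)\). The map is a lift of a homeomorphism between punctured discs, and \(\pi_{\mathrm{or}}\) is a finite branched cover near each puncture. So \(\tilde f\) extends continuously across the finitely many points of \(\pi_{\mathrm{or}}^{-1}(\Sigma(q))\). The same holds for \(\tilde f^{-1}\), the lift of \(f^{-1}\) chosen compatibly.

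The extension maps zeros of \(\alpha\) to zeros of \(\alpha\). For unramified points over even-order zeros, the preimages are still singular (cone angle \((m+2)\pi\) with \(m\ge2\)), or regular if \(m=0\), which does not occur here. Ramified points over odd-order zeros have cone angle \((m+2)2\pi\). Since cone angles are preserved, \(\tilde f\) preserves \(\Sigma(\alpha)\) and lies in \(\Aff^+(Y,\alpha)\).

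\emph{Conclusion and main obstacle.} Since \(\tilde f\) commutes with \(\iota\) and \(\overline D(\tilde f)=\overline D(f)\), every element of \(\Gamma(X,q)\) lies in \(\Gamma(Y,\alpha;\iota)\). The second inclusion, \(\Gamma(Y,\alpha;\iota)\subseteq\Gamma(Y,\alpha)\), is immediate from the definitions.

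I expect the delicate step to be the well-definedness and global continuity of the sign choice. One must check that the rule "linear part equals \(+A\), not \(-A\)" is consistent under analytic continuation around loops in \(X^*\). This follows because the actual linear part is locally constant along continuation, and the ambiguity is exactly the monodromy recorded by \(Y^*\).
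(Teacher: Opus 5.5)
Your proof is correct, and it reaches the lemma by a somewhat different route than the paper.

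\textbf{The paper's route.} The paper works with the sign-holonomy character \(\varepsilon_{q,x}:\pi_1(X^\circ,x)\to\{\pm I\}\) and identifies the covering subgroup with its kernel. From \(A\,\varepsilon(\gamma)=\varepsilon(\phi_*\gamma)A\) and the centrality of \(\pm I\), it shows that \(\phi_*\) preserves this kernel, and then invokes the lifting criterion. Commutation with \(\iota\) follows because \(\widehat\phi\iota\widehat\phi^{-1}\) must be the unique nontrivial deck transformation. The derivative is shown to be a single matrix \(\pm A\) by local constancy of the sign. The extension over \(\pi_{\mathrm{or}}^{-1}(\Sigma)\) comes from the lift being bi-Lipschitz and \(Y\) being the metric completion of \(Y^\circ\).

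\textbf{Your route.} You use the tautological description of \(Y^\circ\) as pairs \((x,\lambda)\) with \(\lambda^2=q_x\), and you write the lift down explicitly: \(\tilde f_A(x,\mathrm dz)=(f(x),\mathrm dw)\), with the target germ fixed by ``linear part exactly \(A\)''. This buys transparency in three ways.
\begin{enumerate}
\item Commutation with \(\iota\) becomes a one-line sign computation.
\item The derivative is exactly \(A\), not merely \(\pm A\).
\item The two lifts are naturally indexed by the two elements of \(\operatorname{pr}^{-1}(\overline D f)\). This is exactly the identity \(D(\Aff^+(Y,\alpha;\iota))=\operatorname{pr}^{-1}(\Gamma(X,q))\) that the paper needs in Section~\ref{sec:modular}.
\end{enumerate}
The paper's route is more conceptual. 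Its metric-completion step also avoids the local analysis at preimages of zeros, which you treat only briefly. The point to make explicit there is that a small punctured disc around a point over \(p\) is connected. Its image therefore lies in a single punctured-disc component over \(f(p)\), so \(\tilde f\) converges to one point there.

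\textbf{Small remarks.}
\begin{itemize}
\item The step you flag as delicate is automatic in your construction. The rule is defined pointwise on \(Y^\circ\) with a fixed global \(A\). Since the linear part of \(w\circ f\circ z^{-1}\) is locally constant, continuity is a purely local check. No loop condition remains: carrying the fiber datum \(\lambda=\mathrm dz\) along replaces the paper's holonomy identity.
\item You have two roles slightly swapped. That \(\tilde f\neq\iota\tilde f\) follows from \(\iota\) being fixed-point-free on \(Y^\circ\), with no connectedness needed. Exhaustiveness of the two lifts is what uses connectedness of \(Y^\circ\).
\item Preservation of \(\Sigma(\alpha)\) is simpler than the cone-angle discussion suggests. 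Every point of \(\pi_{\mathrm{or}}^{-1}(\Sigma(q))\) is a zero of \(\alpha\), so \(\Sigma(\alpha)=Y\setminus Y^\circ\). The extension preserves this set because \(\tilde f\) preserves \(Y^\circ\).
\end{itemize}
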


\begin{proof}
	Let \(\Sigma=\Sigma(q)\), and put
	\(X^\circ=X\setminus\Sigma\) and
	\(Y^\circ=Y\setminus\pi_{\mathrm{or}}^{-1}(\Sigma)\).
	The restriction
	\(\pi_{\mathrm{or}}^\circ:Y^\circ\to X^\circ\)
	is a connected unbranched double cover.
	
	For \(x\in X^\circ\), the sign holonomy of the natural-coordinate
	atlas defines a homomorphism
	\[
	\varepsilon_{q,x}:
	\pi_1(X^\circ,x)\longrightarrow\{\pm I\}.
	\]
	Analytic continuation of a local square root of \(q\) shows that, for
	\(y\in Y^\circ\) with \(\pi_{\mathrm{or}}^\circ(y)=x\),
	\[
	(\pi_{\mathrm{or}}^\circ)_*
	\pi_1(Y^\circ,y)
	=
	\ker\varepsilon_{q,x}.
	\]
	
	Let \(\phi\in\Aff^+(X,q)\), choose \(x_0\in X^\circ\), and set
	\(x_1=\phi(x_0)\).  If \(A\) represents the derivative of \(\phi\) in
	natural coordinates, then naturality of parallel transport gives
	\[
	A\,\varepsilon_{q,x_0}([\gamma])
	=
	\varepsilon_{q,x_1}(\phi_*[\gamma])\,A
	\]
	for every \([\gamma]\in\pi_1(X^\circ,x_0)\).  Since
	\(\{\pm I\}\) is central, it follows that
	\[
	\varepsilon_{q,x_1}(\phi_*[\gamma])
	=
	\varepsilon_{q,x_0}([\gamma]).
	\]
	Thus \(\phi_*\) preserves the covering subgroup, and the
	covering-space lifting criterion gives two lifts
	\(\widehat\phi^\circ:Y^\circ\to Y^\circ\), differing by \(\iota\).
	
	For either lift, the conjugate
	\(\widehat\phi^\circ\iota(\widehat\phi^\circ)^{-1}\)
	is the nontrivial deck transformation of the connected double cover.
	It therefore equals \(\iota\), so every lift commutes with \(\iota\).
	
	In the translation coordinates defined by \(\alpha\), the actual
	derivative of a lift is locally \(A\) or \(-A\).  The sign is locally
	constant and \(Y^\circ\) is connected, so the derivative is one fixed
	matrix \(B=\pm A\).  The lift and its inverse are therefore Lipschitz
	for the flat path metric.  Since \(Y\) is the metric completion of
	\(Y^\circ\), the lift extends uniquely to an affine automorphism of
	\((Y,\alpha)\).  Finally, \(B=\pm A\) implies that the lift and
	\(\phi\) have the same projective derivative.
\end{proof}

\subsection{The hyperelliptic lift and the Vasilyev quotient}
\label{sub:vas} In this subsection we describe Vasilyev's construction for nonsquare
half-translation surfaces of genus two.  Starting from the canonical
orientation double cover, the hyperelliptic involution has a
distinguished lift preserving the canonical one-form.  The quotient
by this lift is a translation surface, while the deck involution
descends to an involution of the quotient.  We first record the
genus-two fact needed in the construction.

\begin{lemma}
	\label{lem:mult}
	Let \(X\) be a closed Riemann surface of genus two, let
	\(\eta_1,\eta_2\) be a basis of the vector space of holomorphic
	one-forms on \(X\), and let \(h_X\) be the hyperelliptic involution.
	Then the following statements hold.
	\begin{enumerate}
		\renewcommand{\labelenumi}{\textup{(\roman{enumi})}}
		
		\item
		The quadratic differentials
		\(\eta_1^2,\eta_1\eta_2,\eta_2^2\) form a basis of the vector space
		of holomorphic quadratic differentials on \(X\).
		
		\item
		Every holomorphic quadratic differential \(q\) on \(X\) can be
		written as \(q=\omega_1\omega_2\) for some holomorphic one-forms
		\(\omega_1,\omega_2\) on \(X\).
		
		\item
		Every holomorphic quadratic differential \(q\) on \(X\) is invariant
		under \(h_X\).  In particular, \(h_X\) is an affine automorphism of
		the half-translation surface \((X,q)\).
	\end{enumerate}
\end{lemma}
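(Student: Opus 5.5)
We will work with the standard hyperelliptic model. Since \(X\) has genus two, it is hyperelliptic, so it can be written as \(y^2=f(x)\) with \(f\) a squarefree polynomial of degree six (or five). In this model the hyperelliptic involution is \(h_X(x,y)=(x,-y)\), and the holomorphic one-forms are spanned by \(\mathrm{d}x/y\) and \(x\,\mathrm{d}x/y\). Every statement in the lemma is independent of the choice of basis \(\eta_1,\eta_2\): part (i) is invariant under an invertible linear change of basis, since \(\eta_1^2,\eta_1\eta_2,\eta_2^2\) transform by the induced invertible map on \(\mathrm{Sym}^2\). We may therefore assume \(\eta_1=\mathrm{d}x/y\) and \(\eta_2=x\,\mathrm{d}x/y\).

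For (i), Riemann--Roch gives \(\dim H^0(X,\Omega_X^{\otimes2})=3g-3=3\). It therefore suffices to show that the three products \((\mathrm{d}x/y)^2\,\{1,x,x^2\}\) are linearly independent. This holds because \(1,x,x^2\) are linearly independent meromorphic functions, and multiplication by the nonzero differential \((\mathrm{d}x/y)^2\) is injective. Holomorphicity of the products is automatic, since they are products of holomorphic forms.

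For (ii), part (i) lets us write any \(q\) as \(q=p(x)\,(\mathrm{d}x/y)^2\) with \(p\) a polynomial of degree at most two. Over \(\C\), such a polynomial factors as \(p(x)=c\,(x-a)(x-b)\), or more degenerately as \(c(x-a)\) or as a constant \(c\). We then set \(\omega_1=c\,(x-a)\,\mathrm{d}x/y\) and \(\omega_2=(x-b)\,\mathrm{d}x/y\), replacing a missing linear factor by \(1\). Both forms lie in the span of \(\eta_1,\eta_2\), so they are holomorphic, and their product is \(q\). The case \(q=0\) is trivial, taking \(\omega_1=0\).

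For (iii), we have \(h_X^*(\mathrm{d}x/y)=-\mathrm{d}x/y\) and \(h_X^*(x\,\mathrm{d}x/y)=-x\,\mathrm{d}x/y\), so \(h_X^*\) acts by \(-1\) on every holomorphic one-form. Hence \(h_X^*(\omega_1\omega_2)=\omega_1\omega_2\), and so \(h_X^*q=q\) by (ii). A biholomorphism \(\phi\) with \(\phi^*q=q\) preserves \(\Sigma(q)\). In natural coordinates it is given by \(\zeta\mapsto\pm\zeta+c\), since pulling back \(\mathrm{d}w^2\) to \(\mathrm{d}z^2\) forces the local map to satisfy \(w'(z)^2=1\). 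It is therefore an affine automorphism with derivative \(\pm I\).

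The only delicate point is justifying the basis \(\mathrm{d}x/y,\ x\,\mathrm{d}x/y\) and the action of \(h_X\) on it, including the behaviour at the points over \(x=\infty\). This is standard, and as an alternative we can argue intrinsically. The quotient \(X/h_X\) has genus zero, so \(\mathbb P^1\) carries no nonzero holomorphic one-forms. Hence the \(+1\)-eigenspace of \(h_X^*\) on \(H^0(\Omega_X)\) is zero, and \(h_X^*=-1\) on one-forms. We would present this intrinsic argument for the action in (iii), and keep the explicit model only for (i) and (ii).
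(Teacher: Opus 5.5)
Your proof is correct and takes essentially the same route as the paper: you obtain (ii) by factoring a quadratic polynomial in the basis from (i), and (iii) from the fact that \(h_X^*\) acts as \(-1\) on holomorphic one-forms. The only differences are that you prove (i) yourself, via the Riemann--Roch count \(3g-3=3\) and the linear independence of \(1,x,x^2\), and that you justify \(h_X^*=-1\) through the genus-zero quotient, whereas the paper cites Farkas--Kra for (i).
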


\begin{proof}
	 Statement~\textup{(i)} is a standard fact about genus-two Riemann
	 surfaces; see
	 \cite[Chapter~III, Section~7.5, Corollaries~1--2]{FarkasKra}.
	 Statement~\textup{(iii)} follows from~\textup{(i)}, since the
	 hyperelliptic involution acts as \(-I\) on the space of holomorphic
	 one-forms.  We only prove~\textup{(ii)}.  By~\textup{(i)}, every holomorphic
	quadratic differential \(q\) can be written as
	\[
	q=a\eta_1^2+b\eta_1\eta_2+c\eta_2^2
	\]
	for some \(a,b,c\in\C\).  The corresponding homogeneous quadratic
	polynomial factors over \(\C\).  Hence there exist
	\(\alpha,\beta,\gamma,\delta\in\C\) such that
	\[
	q=
	(\alpha\eta_1+\beta\eta_2)
	(\gamma\eta_1+\delta\eta_2),
	\]
	which proves~\textup{(ii)}.
\end{proof}

Let \((X,q)\) be a half-translation surface of genus two, with \(q\)
not a global square, and let
\[
\pi_{\mathrm{or}}:(Y,\alpha)\longrightarrow(X,q)
\]
be its canonical orientation double cover.  Denote its deck
involution by \(\iota\).  Thus \(Y\) is connected,
\(\pi_{\mathrm{or}}^*q=\alpha^2\), and
\(\iota^*\alpha=-\alpha\).

Let \(h_X\) be the hyperelliptic involution of \(X\).  By
Lemma~\ref{lem:mult}\textup{(iii)}, it is an affine automorphism of
\((X,q)\).  Lemma~\ref{lem:lift} therefore gives two affine lifts of
\(h_X\) to \((Y,\alpha)\), differing by composition with \(\iota\). Let \(\widetilde h\) be either lift of \(h_X\).  Since
\(\pi_{\mathrm{or}}\circ\widetilde h
=h_X\circ\pi_{\mathrm{or}}\), we have
\[
(\widetilde h^*\alpha)^2
=
\widetilde h^*(\alpha^2)
=
\widetilde h^*\pi_{\mathrm{or}}^*q
=
\pi_{\mathrm{or}}^*h_X^*q
=
\alpha^2.
\]
Because \(Y\) is connected,
\(\widetilde h^*\alpha=\pm\alpha\).  The two lifts have opposite
signs, since they differ by \(\iota\) and
\(\iota^*\alpha=-\alpha\).  Hence exactly one lift preserves
\(\alpha\).  We denote it by \(\tau\).

The defining relation for \(\tau\) is represented by the commutative
diagram
\[
\begin{tikzcd}[column sep=large,row sep=large]
	Y \arrow[r,"\tau"] \arrow[d,"\pi_{\mathrm{or}}"']
	& Y \arrow[d,"\pi_{\mathrm{or}}"] \\
	X \arrow[r,"h_X"']
	& X
\end{tikzcd}.
\]
Thus
\(\pi_{\mathrm{or}}\circ\tau=h_X\circ\pi_{\mathrm{or}}\) and
\(\tau^*\alpha=\alpha\).

Since \(h_X^2=\operatorname{id}_X\), the map \(\tau^2\) is a deck
transformation of \(\pi_{\mathrm{or}}\).  It preserves \(\alpha\),
whereas the nontrivial deck transformation \(\iota\) sends
\(\alpha\) to \(-\alpha\).  Therefore
\(\tau^2=\operatorname{id}_Y\).  Moreover,
Lemma~\ref{lem:lift} implies that \(\tau\) commutes with \(\iota\).
We have therefore obtained two commuting holomorphic involutions on
\(Y\) satisfying
\[
\tau^*\alpha=\alpha,
\qquad
\iota^*\alpha=-\alpha.
\]


Since \(\tau\) preserves \(\alpha\), it is locally a translation in
the coordinates defined by \(\alpha\).  Let
\[
Z:=Y/\langle\tau\rangle,
\qquad
\pi_V:Y\longrightarrow Z
\]
be the quotient.  The one-form \(\alpha\) descends uniquely to a
holomorphic one-form \(\omega\) on \(Z\), characterized by
\(\pi_V^*\omega=\alpha\).

Because \(\iota\) commutes with \(\tau\), it preserves the
\(\tau\)-orbits and induces a holomorphic involution \(j\) on \(Z\).
The descent is expressed by the commutative diagram
\[
\begin{tikzcd}[column sep=large,row sep=large]
	Y \arrow[r,"\iota"] \arrow[d,"\pi_V"']
	& Y \arrow[d,"\pi_V"] \\
	Z \arrow[r,"j"']
	& Z
\end{tikzcd}.
\]
Equivalently, \(j\circ\pi_V=\pi_V\circ\iota\).  Pulling back
\(j^*\omega\) to \(Y\), we obtain
\[
\pi_V^*(j^*\omega)
=
\iota^*(\pi_V^*\omega)
=
\iota^*\alpha
=
-\alpha.
\]
The injectivity of pullback by \(\pi_V\) therefore gives
\(j^*\omega=-\omega\).

The two quotient maps from \(Y\) fit into a commutative diagram
\[
\begin{tikzcd}[column sep=large,row sep=large]
	Y \arrow[r,"\pi_{\mathrm{or}}"] \arrow[d,"\pi_V"']
	& X \arrow[d,"\rho_X"] \\
	Z \arrow[r,"\rho_Z"']
	& \mathbb P^1 ,
\end{tikzcd}
\]
where \(\mathbb P^1\) is the Riemann sphere, \(\rho_X\) is the hyperelliptic quotient and \(\rho_Z\) is the
quotient by \(j\).  Indeed, the commutativity of \(\iota\) and
\(\tau\) gives
\[
Z/\langle j\rangle
\cong
Y/\langle\iota,\tau\rangle
\cong
X/\langle h_X\rangle
\cong
\mathbb P^1.
\]

Let \(B_V\subset Z\) be the branch-value set of \(\pi_V\).  We call
\((Z,\omega)\) the \emph{Vasilyev translation quotient} of
\((X,q)\), and define the associated equivariant marked data by
\begin{equation}\label{eq:vas-data}
	\mathcal V(X,q):=(Z,\omega;B_V,j).
\end{equation}
Thus \(B_V\) is retained as a finite marked set and \(j\) as an
additional involution.

In the three nonsquare strata considered below,
Proposition~\ref{prop:quot-data} shows that every point of \(B_V\) is
regular for \(\omega\) whenever \(B_V\neq\varnothing\).  We set
\[
\Aff^+\bigl(\mathcal V(X,q)\bigr)
:=
\Aff^+(Z,\omega;B_V,j),
\qquad
\Gamma\bigl(\mathcal V(X,q)\bigr)
:=
\Gamma(Z,\omega;B_V,j).
\]
The genus of \(Z\), the ramification of \(\pi_V\), the zero divisor
of \(\omega\), and the marking \(B_V\) are determined in
Section~\ref{sec:quotients} and verified algebraically in
Appendix~\ref{app:quotients}.

\section{The low-area reduction}\label{sec:low-area}

The covering tools established in Section~\ref{sec:covers}
allow us to isolate the hyperbolic part of the argument.  We equip
\(\Hh\) with the hyperbolic metric of curvature \(-1\), so that
\[
  \Area\bigl(\PSL_2(\Z)\backslash\Hh\bigr)=\frac{\pi}{3}.
\]
We first compare covolumes under inclusions of Fuchsian groups, then
classify the possible small-area orbifolds, and finally eliminate the
order-four triangle group.

\subsection{Covolume and Fuchsian overgroups}

The first tool is elementary but will be used repeatedly.  It converts
an inclusion of Fuchsian groups into an exact comparison of quotient
areas.

\begin{lemma}\label{lem:index}
Let
\(\Gamma\subseteq\Lambda<\PSL_2(\R)\) be Fuchsian groups.  If
\(\Gamma\) has finite covolume, then
\([\Lambda:\Gamma]<\infty\), and
\begin{equation}\label{eq:index}
  \Area(\Gamma\backslash\Hh)
  =
  [\Lambda:\Gamma]\,
  \Area(\Lambda\backslash\Hh).
\end{equation}
\end{lemma}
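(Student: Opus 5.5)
The plan is to compare fundamental domains, or equivalently the hyperbolic measures of the two quotients. First I will show that \(\Lambda\) also has finite covolume and that the index is finite. Then I will prove the multiplicativity formula \eqref{eq:index} by tiling.

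The first step is to choose a measurable (Dirichlet) fundamental domain \(F_\Lambda\) for \(\Lambda\). Let \(\{\lambda_i\}_{i\in I}\) be a set of representatives of the cosets in \(\Gamma\backslash\Lambda\), so that \(\Lambda=\bigsqcup_i\Gamma\lambda_i\). Then \(F_\Gamma:=\bigcup_i\lambda_iF_\Lambda\) is a fundamental domain for \(\Gamma\) up to measure zero. Every \(z\in\Hh\) is \(\Lambda\)-equivalent to a point of \(F_\Lambda\), hence \(\Gamma\)-equivalent to a point of some \(\lambda_iF_\Lambda\). Conversely, if two points of \(F_\Gamma\) are \(\Gamma\)-equivalent, say \(\gamma\lambda_iz=\lambda_jw\) with \(z,w\in F_\Lambda\), then \(z=w\) up to a null set. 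This gives \(\lambda_j^{-1}\gamma\lambda_i\in\Stab\), which is trivial off a null set, so \(\Gamma\lambda_i=\Gamma\lambda_j\) and \(i=j\).

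Since hyperbolic area is invariant under isometries, this yields
\[
  \Area(\Gamma\backslash\Hh)=\sum_{i\in I}\Area(\lambda_iF_\Lambda)=|I|\,\Area(\Lambda\backslash\Hh).
\]
The only points to check are the measure-theoretic facts: the boundary of a Dirichlet domain has measure zero, and the set of points with nontrivial stabilizer in a discrete group is countable, hence null. One subtlety is that in \(\PSL_2(\R)\) the element \(-I\) is already identified with \(I\), so no factor of \(2\) arises.

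The main point is then finiteness. \(\Area(\Lambda\backslash\Hh)\) is strictly positive, because \(F_\Lambda\) contains an open set: a Dirichlet domain centered at a point with trivial stabilizer contains a small ball. The left-hand side is finite by hypothesis, so \(|I|\) must be finite and equal to the ratio of the areas. This shows both \([\Lambda:\Gamma]<\infty\) and \eqref{eq:index} at once, with \(\Lambda\) automatically of finite covolume. I expect no real obstacle; the only care needed is in the null-set bookkeeping when the \(\lambda_iF_\Lambda\) overlap along boundaries and at elliptic fixed points.
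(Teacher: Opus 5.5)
Your proposal is correct and follows the same route as the paper's proof: translate a fundamental domain for \(\Lambda\) by coset representatives to get a fundamental domain for \(\Gamma\), use the positive area of that domain to force finite index, and sum the areas to obtain the formula. You also supply the measure-zero bookkeeping (Dirichlet boundaries and elliptic fixed points) that the paper's three-line proof leaves implicit.
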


\begin{proof}
Let \(F\) be a measurable fundamental domain for \(\Lambda\).  The
translates of \(F\) indexed by the left cosets of \(\Gamma\) in
\(\Lambda\) give pairwise disjoint copies in
\(\Gamma\backslash\Hh\).  Since \(F\) has positive area, infinitely many cosets would force
infinite covolume.  In the finite-index case, summing their areas gives
\eqref{eq:index}.
\end{proof}

\subsection{Small noncompact orbifolds}

We next determine which orientable, noncompact hyperbolic orbifolds can
have area below, or equal to, the proposed bound \(3\pi/5\).
This is a purely hyperbolic statement.

\begin{lemma}\label{lem:small-orb}
Every orientable, noncompact, finite-area hyperbolic orbifold has area
at least \(\pi/3\), with equality only for signature
\((0;2,3;1)\).

If its area is strictly smaller than \(3\pi/5\), its signature is
either
\[
  (0;2,3;1)
  \qquad\text{or}\qquad
  (0;2,4;1).
\]
Moreover, if its area is equal to \(3\pi/5\), its signature is
\((0;2,5;1)\).
\end{lemma}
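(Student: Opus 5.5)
We use the orbifold Gauss--Bonnet formula. An orientable hyperbolic orbifold of finite area with signature \((g;m_1,\dots,m_r;s)\) has area
\[
  2\pi\Bigl(2g-2+s+\sum_{i=1}^r\bigl(1-\tfrac1{m_i}\bigr)\Bigr).
\]
Noncompactness means \(s\ge1\). We normalize by setting \(A/2\pi = 2g-2+s+\sum(1-1/m_i)\). The bound \(A<3\pi/5\) becomes \(A/2\pi<3/10\), and equality becomes \(A/2\pi = 3/10\). The proof is a finite case analysis on \((g,s,r)\).

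\textbf{Reducing to genus zero with one cusp.}
\begin{itemize}
  \item If \(g\ge1\), then \(2g-2+s\ge s\ge1>3/10\).
  \item If \(g=0\) and \(s\ge2\), then \(A/2\pi = s-2+\sum(1-1/m_i)\). For \(s\ge3\) this is at least \(1\). For \(s=2\) it is \(\sum(1-1/m_i)\). Hyperbolicity forces \(r\ge1\), and each term is at least \(1/2\), which exceeds \(3/10\).
\end{itemize}
So \(g=0\) and \(s=1\), and \(A/2\pi=-1+\sum_{i=1}^r(1-1/m_i)\).

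\textbf{Bounding the number of cone points.}
\begin{itemize}
  \item If \(r\ge4\), the sum is at least \(2\), so \(A/2\pi\ge1\).
  \item If \(r=3\), then \(A/2\pi = 2-\sum 1/m_i\ge 2-3/2=1/2>3/10\).
  \item If \(r\le1\), the quantity is nonpositive, so such an orbifold is not hyperbolic.
\end{itemize}
So \(r=2\), and \(A/2\pi = 1-1/m_1-1/m_2\). Positivity requires \(1/m_1+1/m_2<1\).

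\textbf{Two cone points.} Order the cone angles so that \(2\le m_1\le m_2\). We need \(1/m_1+1/m_2>7/10\) for strict inequality, or \(=7/10\) for equality.
\begin{itemize}
  \item If \(m_1\ge3\), then \(1/m_1+1/m_2\le2/3<7/10\).
  \item So \(m_1=2\), and the condition becomes \(1/m_2\ge1/5\) with \(1/2+1/m_2<1\). Hence \(m_2\in\{3,4,5\}\).
\end{itemize}
The three values give \(A/2\pi=1/6,\,1/4,\,3/10\), so \(A=\pi/3,\ \pi/2,\ 3\pi/5\).

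The minimum \(\pi/3\) is therefore attained only by \((0;2,3;1)\). Area strictly below \(3\pi/5\) allows only \((0;2,3;1)\) and \((0;2,4;1)\). Area equal to \(3\pi/5\) forces \((0;2,5;1)\).

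The only delicate point is the convention that cone orders satisfy \(m_i\ge2\), with cusps counted separately, together with the requirement of positive area. There is no real obstacle; the argument is bookkeeping.
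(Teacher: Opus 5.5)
Your proposal is correct and follows the paper's proof essentially step for step. Both use orbifold Gauss--Bonnet to force genus zero, then exactly one cusp, then exactly two cone points, then \(m_1=2\) and \(m_2\in\{3,4,5\}\), and read the three assertions off the resulting table of areas \(\pi/3,\ \pi/2,\ 3\pi/5\).
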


\begin{proof}
Let the signature of the orbifold be
\[
  (h;m_1,\ldots,m_r;c).
\]
Here \(h\) is the genus of the underlying closed Riemann surface of the orbifold.
The integers \(m_1,\ldots,m_r\geq2\) are the orders of the elliptic
points, and \(c\) is the number of cusps.  Since the orbifold is
noncompact and has
finite hyperbolic area, it has at least one cusp; hence
\(c\geq1\).
  
The orbifold Gauss--Bonnet formula gives
\begin{equation}\label{eq:gb}
  \frac{\Area}{2\pi}
  =
  2h-2+c+\sum_{i=1}^r\left(1-\frac1{m_i}\right).
\end{equation}
Set
\[
  a:=2h-2+c+\sum_{i=1}^r\left(1-\frac1{m_i}\right).
\]
Thus \(\Area=2\pi a\), and hyperbolicity implies \(a>0\).
We classify all possibilities satisfying
\[
  0<a\leq\frac3{10},
\]
which is equivalent to \(\Area\leq3\pi/5\).

\medskip
\noindent
\textit{Step 1: the underlying closed Riemann surface has genus zero.}

Suppose that \(h\geq1\).  Since \(c\geq1\) and every elliptic
contribution is nonnegative,
\[
  a=2h-2+c+\sum_{i=1}^r\left(1-\frac1{m_i}\right)\geq2h-2+c\geq1.
\]
This contradicts \(a\leq3/10\).  Therefore \(h=0\).

\medskip
\noindent
\textit{Step 2: the orbifold has exactly one cusp.}

With \(h=0\), formula \eqref{eq:gb} becomes
\[
  a=-2+c+\sum_{i=1}^r\left(1-\frac1{m_i}\right).
\]
If \(c\geq3\), then \(a\geq-2+c\geq1\), which is impossible.
If \(c=2\), then
\[
  a=\sum_{i=1}^r\left(1-\frac1{m_i}\right).
\]
The condition \(a>0\) forces \(r\geq1\).  Every elliptic point
contributes at least \(1-1/2=1/2\), so \(a\geq1/2\), again a
contradiction.  Since \(c\geq1\), the only remaining possibility is
\(c=1\).

\medskip
\noindent
\textit{Step 3: the orbifold has exactly two elliptic points.}

We now have \(h=0\) and \(c=1\), so
\begin{equation}\label{eq:one-cusp}
  a=-1+\sum_{i=1}^r\left(1-\frac1{m_i}\right)
   =r-1-\sum_{i=1}^r\frac1{m_i}.
\end{equation}
If \(r=0\), then \(a=-1\), while if \(r=1\), then
\(a=-1/m_1<0\).  Hence \(r\geq2\).

If \(r\geq3\), then \(1/m_i\leq1/2\) for every \(i\), and
\eqref{eq:one-cusp} gives
\[
  a\geq r-1-\frac r2=\frac r2-1\geq\frac12.
\]
This is impossible.  Therefore \(r=2\).

Thus the signature has the form \((0;m,n;1)\), where we may assume
\(2\leq m\leq n\), and
\begin{equation}\label{eq:two-ell}
  a=1-\frac1m-\frac1n.
\end{equation}

\medskip
\noindent
\textit{Step 4: determine the elliptic orders.}

If \(m\geq3\), then \(n\geq 3\) and therefore
\[
  \frac1m+\frac1n\leq\frac23.
\]
Equation \eqref{eq:two-ell} would give \(a\geq1/3>3/10\).
Hence \(m=2\).  We then have
\[
  a=\frac12-\frac1n.
\]
The inequality \(a>0\) gives \(n>2\), while \(a\leq3/10\) gives
\(1/n\geq1/5\), hence \(n\leq5\).  Consequently
\[
  n\in\{3,4,5\}.
\]
The corresponding areas are
\[
\begin{array}{c|c|c}
  \text{signature} & a & \Area\\ \hline
  (0;2,3;1) & 1/6 & \pi/3\\
  (0;2,4;1) & 1/4 & \pi/2\\
  (0;2,5;1) & 3/10 & 3\pi/5.
\end{array}
\]
The three assertions of the lemma now follow directly from this table.
\end{proof}

We now explain why the signatures in Lemma~\ref{lem:small-orb}
correspond to triangle groups.  By our convention, a fundamental
domain for \(\DeltaTri(p,q,\infty)\) is the union of two adjacent
copies of the hyperbolic triangle \(T(p,q,\infty)\).  The two finite
vertices give elliptic points of orders \(p\) and \(q\), respectively,
while the ideal vertex gives a cusp.  Hence
\(\DeltaTri(p,q,\infty)\backslash\Hh\) has orbifold signature
\((0;p,q;1)\).

Conversely, an orbifold with signature \((0;p,q;1)\) has compactified
underlying Riemann surface \(\mathbb P^1\), with two cone points of
orders \(p\) and \(q\) and one cusp.  A M\"obius transformation sends
these three distinguished points to \(0\), \(1\), and \(\infty\), so
the orbifold is unique up to conformal isomorphism.  The uniqueness of
hyperbolic orbifold uniformization then implies that every Fuchsian
group with this signature is conjugate in \(\PSL_2(\R)\) to
\(\DeltaTri(p,q,\infty)\).  This identifies the signatures in
Lemma~\ref{lem:small-orb} with the triangle groups used below.

\subsection{Excluding the order-four triangle group}

The orbifold calculation leaves two possible groups below the target
area.  We now eliminate \(\DeltaTri(2,4,\infty)\), using
Lemma~\ref{lem:lift} in the nonsquare case.

\begin{proposition}\label{prop:hecke4}
The group \(\DeltaTri(2,4,\infty)\) is not the projective Veech group
of a closed half-translation surface.
\end{proposition}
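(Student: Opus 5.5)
The proof splits according to whether \(q\) is a global square, and in the nonsquare case reduces to the square case via Lemma~\ref{lem:lift}.

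\emph{Translation case.} Suppose \(q=\omega^2\), so \(\Gamma(X,q)=\Gamma(X,\omega)\). Here we invoke the theorem of Hubert and Schmidt: the Hecke group \(\DeltaTri(2,4,\infty)\) is not the Veech group of any translation surface. Their obstruction is about the trace field. The generator \(\begin{pmatrix}1&2\cos(\pi/4)\\0&1\end{pmatrix}\) has translation length \(\sqrt2\). Moreover \(\DeltaTri(2,4,\infty)\) contains elliptic elements of order four, whose lifts to \(\SL_2(\R)\) have trace \(\pm\sqrt2\). The trace field is therefore \(\mathbb Q(\sqrt2)\). The Veech group would be a lattice, hence contain hyperbolics, so the trace field must equal the field generated by the translation surface's periods. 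However, the affine group contains a rotation by \(\pi/4\) of the flat structure, i.e.\ an order-\(8\) element of \(\SL(X,\omega)\) or its projectivization of order \(4\). If the actual derivative group has no \(-I\), the element of order four in \(\PSL\) lifts to an element of order \(8\) in \(\SL\). That forces an affine automorphism of order \(8\) acting as rotation by \(\pi/4\), which has to permute the zeros and cylinder decompositions compatibly. In Hubert--Schmidt this is ruled out by showing that every parabolic direction would be mapped to one whose cylinder moduli are incommensurable. I will simply cite their result rather than reprove it.

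\emph{Nonsquare case.} Let \((Y,\alpha)\) be the connected orientation cover. By Lemma~\ref{lem:lift}, \(\Gamma(X,q)\subseteq\Gamma(Y,\alpha)\). Both groups are Fuchsian, and \(\Gamma(X,q)\) has finite covolume. Lemma~\ref{lem:index} therefore makes \(\Gamma(Y,\alpha)\) a lattice containing \(\DeltaTri(2,4,\infty)\) with finite index. The key step, and the main obstacle, is to identify the Fuchsian overgroups of \(\DeltaTri(2,4,\infty)\). By Lemma~\ref{lem:small-orb}, any proper finite-index overgroup would have area at most \(\tfrac12\cdot\tfrac{\pi}{2}=\pi/4<\pi/3\). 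No noncompact orbifold has such area, and the overgroup is a lattice with a cusp, so no proper overgroup exists. Hence \(\Gamma(Y,\alpha)=\DeltaTri(2,4,\infty)\), which contradicts the translation case applied to \((Y,\alpha)\).

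One subtlety is that the Hubert--Schmidt result must hold for translation surfaces of arbitrary genus, since \(Y\) may have higher genus. I expect their statement to be genus-independent, as it rests only on the trace field and the order-four elliptic. That is the point to verify when citing them.
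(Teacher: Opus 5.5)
Your proof is correct and is the paper's proof. You cite Hubert--Schmidt's Theorem~1 in the translation case, and in the nonsquare case you combine Lemma~\ref{lem:lift}, Lemma~\ref{lem:index} and the lower bound \(\pi/3\) from Lemma~\ref{lem:small-orb} to force \(\Gamma(Y,\alpha)=\DeltaTri(2,4,\infty)\), then cite Hubert--Schmidt again for the higher-genus cover (the paper likewise uses their theorem with no genus restriction). Your heuristic sketch of why Hubert--Schmidt holds is not used and is partly inaccurate: every lift to \(\SL_2(\R)\) of a projective elliptic of order four has order eight with fourth power \(-I\), so your conditional about \(-I\) is vacuous, and the sketch should simply be replaced by the citation.
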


\begin{proof}
In the convention of Hubert and Schmidt, the Hecke group of index
\(4\) is the Fuchsian triangle group of triangle type
\((2,4,\infty)\); in our notation it is \(\DeltaTri(2,4,\infty)\).
They use the projective Veech group in \(\PSL_2(\R)\), and their
Theorem~1 states that this group is not realizable as the Veech group
of a translation surface \cite[Theorem~1]{HubertSchmidt2001}.

Suppose now that \(q\) is nonsquare and
\(\Gamma(X,q)=\DeltaTri(2,4,\infty)\).
Let \((Y,\alpha)\) be the canonical orientation cover.  By
Lemma~\ref{lem:lift},
\[
  \Gamma(X,q)\subseteq\Gamma(Y,\alpha).
\]
If this inclusion is an equality, then
\[
  \Gamma(Y,\alpha)=\DeltaTri(2,4,\infty),
\]
contradicting the translation-surface obstruction of Hubert--Schmidt.

Suppose instead that the inclusion is strict.  Since
\(\DeltaTri(2,4,\infty)\) has covolume \(\pi/2\), its index is at
least two, and Lemma~\ref{lem:index} gives
\[
  \Area\bigl(\Gamma(Y,\alpha)\backslash\Hh\bigr)
  \leq\frac{\pi}{4}.
\]
The group \(\Gamma(Y,\alpha)\) is a Fuchsian lattice by
Lemma~\ref{lem:index}.  It contains the parabolic elements of
\(\Gamma(X,q)\), so its quotient is noncompact.  Lemma~\ref{lem:small-orb} therefore gives
\[
  \Area\bigl(\Gamma(Y,\alpha)\backslash\Hh\bigr)
  \geq\frac{\pi}{3},
\]
which is a contradiction.
\end{proof}

\begin{corollary}\label{cor:reduction}
The area bound in Theorem~\ref{thm:main} is equivalent to the
nonexistence of a holomorphic quadratic differential on a closed
Riemann surface of genus two with projective Veech group
\(\PSL_2(\Z)\).
\end{corollary}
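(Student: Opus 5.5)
The corollary asserts an equivalence, so I will prove both directions. The area bound means \(\Area(\Gamma(X,q)\backslash\Hh)\geq 3\pi/5\) for every genus-two half-translation surface whose projective Veech group is a lattice. The other statement is the nonexistence of a genus-two \((X,q)\) with \(\Gamma(X,q)\) conjugate to \(\PSL_2(\Z)\). Conjugating a Veech group amounts to replacing \((X,q)\) by a point of its \(\SL_2(\R)\)-orbit, which stays in genus two. So "equal to \(\PSL_2(\Z)\)" and "conjugate to \(\PSL_2(\Z)\)" may be used interchangeably.

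\emph{Easy direction.} Assume the area bound holds. If some genus-two \((X,q)\) had \(\Gamma(X,q)=\PSL_2(\Z)\), it would be a lattice of covolume \(\pi/3<3\pi/5\), which contradicts the bound. Hence no such differential exists.

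\emph{Main direction.} Assume no genus-two \((X,q)\) has Veech group conjugate to \(\PSL_2(\Z)\). Suppose, for contradiction, that \(\Gamma(X,q)\) is a lattice with area strictly less than \(3\pi/5\).
\begin{enumerate}
\item In genus at least two a Veech lattice is nonuniform, as recalled in the introduction. The quotient \(\Gamma(X,q)\backslash\Hh\) is an orientable hyperbolic orbifold because \(\Gamma(X,q)<\PSL_2(\R)\). So it is an orientable, noncompact, finite-area hyperbolic orbifold.
\item By Lemma~\ref{lem:small-orb}, its signature is \((0;2,3;1)\) or \((0;2,4;1)\).
\item By the uniformization discussion following that lemma, \(\Gamma(X,q)\) is conjugate either to \(\DeltaTri(2,3,\infty)\cong\PSL_2(\Z)\) or to \(\DeltaTri(2,4,\infty)\).
\item The first case is excluded by hypothesis.
\item In the second case, conjugate by a suitable element of \(\SL_2(\R)\) so that \(\Gamma(X,q)=\DeltaTri(2,4,\infty)\) exactly. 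Proposition~\ref{prop:hecke4} forbids this.
\end{enumerate}
This contradiction shows that every lattice Veech group in genus two has area at least \(3\pi/5\).

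\emph{Point to check.} The only care needed is in step (iii) of the main direction, where the signature classification identifies the group only up to conjugacy. Proposition~\ref{prop:hecke4} and the hypothesis are stated for actual equality. This gap is closed by the orbit-invariance remark above: \(\Gamma(A\cdot(X,q))=A\,\Gamma(X,q)\,A^{-1}\) for \(A\in\SL_2(\R)\), and \(A\cdot(X,q)\) is again a genus-two surface. I expect no other obstacle; the corollary is a bookkeeping consequence of Lemmas~\ref{lem:index} and~\ref{lem:small-orb} and Proposition~\ref{prop:hecke4}.
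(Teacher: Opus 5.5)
Your proposal is correct and follows essentially the same argument as the paper: Lemma~\ref{lem:small-orb} reduces a counterexample to $\DeltaTri(2,3,\infty)$ or $\DeltaTri(2,4,\infty)$, Proposition~\ref{prop:hecke4} excludes the latter, and the converse comes from the covolume $\pi/3<3\pi/5$. Your explicit treatment of noncompactness and of replacing conjugacy by equality via the $\SL_2(\R)$-action only spells out steps the paper leaves implicit.
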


\begin{proof}
By Lemma~\ref{lem:small-orb}, a counterexample has group
\(\DeltaTri(2,3,\infty)\) or \(\DeltaTri(2,4,\infty)\).
Proposition~\ref{prop:hecke4} excludes the second.  Conversely,
\(\PSL_2(\Z)\) has covolume \(\pi/3<3\pi/5\).
\end{proof}

Thus every possible violation of the desired lower bound has now been
reduced to the modular group \(\PSL_2(\Z)\).  Section~\ref{sec:quotients}
next applies the quotient geometry of Section~\ref{sec:covers} to
the three nonsquare strata in genus two.

\section{Quotient data in genus two and affine descent}
\label{sec:quotients}

We now apply the covering--descent construction of
Section~\ref{sec:covers} to nonsquare quadratic differentials in
genus two.  We first classify the possible zero patterns and compute the
topological and flat data of the corresponding Vasilyev quotients.  We
then prove that affine automorphisms preserve the full quotient datum
and record the arithmeticity consequences needed later.

\subsection{The nonsquare strata}\label{sub:nonsquare-strata}

The zero orders of a holomorphic quadratic differential in genus two
form a partition of \(4\).  Masur and Smillie prove that the
nonsquare singularity data \((4;-1)\) and \((3,1;-1)\) are not
realizable; see \cite[Theorem~1 and p.~293]{MasurSmillie1993}.
Consequently, the only nonsquare strata in genus two are
\[
\cQ_2^{\mathrm{ns}}(2,2),\qquad
\cQ_2(2,1,1),\qquad
\cQ_2(1,1,1,1).
\]
Equivalently, \(\cQ_2(3,1)=\varnothing\), and every quadratic
differential in \(\cQ_2(4)\) is a square.

\subsection{The quotient datum by stratum}

For a nonsquare genus-two half-translation surface \((X,q)\), let
\[
  \pi_{\mathrm{or}}:(Y,\alpha)\longrightarrow(X,q)
\]
be its canonical orientation double cover, and write
\[
  \mathcal V(X,q)=(Z,\omega;B_V,j),
  \qquad
  \pi_V:Y\longrightarrow Z,
\]
for the Vasilyev datum and quotient map constructed in
Subsection~\ref{sub:vas}.  The construction originates in Vasilyev's
work \cite{Vasilyev2005}.  The form needed here also records the
branch-value set and the residual involution.  We state the resulting
data by stratum below; Appendix~\ref{app:quotients} gives a
self-contained genus-two proof using explicit Riemann-surface models.

\begin{proposition}
	\label{prop:quot-data}
	For a closed Riemann surface \(S\), let \(g(S)\) denote its genus.
	The downstairs data have the following form in the three nonsquare
	strata:
	\begin{center}
		\small
		\begin{tabular}{ccccc}
			\toprule
			\((X,q)\) & \(g(Y)\) & \((Z,\omega)\) & \(B_V\subset Z\)
			& action of \(j\)\\
			\midrule
			\(\cQ_2^{\mathrm{ns}}(2,2)\) & \(3\) & flat torus
			& four regular points & fixed-point-free on \(B_V\)\\
			\(\cQ_2(2,1,1)\) & \(4\) & \(\cH_2(2)\)
			& two regular points & interchanges them\\
			\(\cQ_2(1,1,1,1)\) & \(5\) & \(\cH_3(2,2)\)
			& \(\varnothing\) & fixes both double zeros\\
			\bottomrule
		\end{tabular}
	\end{center}
	In particular, every point of \(B_V\) is regular for \(\omega\).
	In the genus-three case, the two double zeros of \(\omega\) are
	Weierstrass points fixed by \(j\), and \(\tau\) acts on the four
	zeros of \(\alpha\) as a product of two disjoint transpositions.
\end{proposition}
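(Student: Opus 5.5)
The plan is to prove Proposition~\ref{prop:quot-data} with explicit algebraic models. Every quantity in the table can then be read off from function fields and divisors of explicit one-forms, with the Riemann--Hurwitz formula as a consistency check. Write \(X\) as \(y^2=f(x)\) with \(\deg f=6\), after a M\"obius change in \(x\) so that \(\infty\) is not a Weierstrass point. Put \(\eta=\mathrm dx/y\), so the holomorphic one-forms are \((a+bx)\eta\). By Lemma~\ref{lem:mult}\textup{(ii)} we may write
\[
q=c\,(x-a_1)(x-a_2)\,\eta^2 ,
\]
allowing one factor to be constant if a root sits at \(\infty\). Since \((x-a)\eta\) has a double zero at a Weierstrass point over \(a\) and two simple zeros otherwise, the zero pattern of \(q\) is governed by whether \(a_1,a_2\) are roots of \(f\):
\begin{itemize}[leftmargin=2em]
\item both are distinct roots of \(f\): pattern \((2,2)\), nonsquare;
\item exactly one is a root of \(f\): pattern \((2,1,1)\);
\item neither is a root and \(a_1\neq a_2\): pattern \((1,1,1,1)\).
\end{itemize}
If \(a_1=a_2\), then \(q\) is a square and is excluded. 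This matches the strata list in Subsection~\ref{sub:nonsquare-strata}.

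Next I will identify the three surfaces and the two involutions concretely.
\begin{itemize}[leftmargin=2em]
\item The orientation cover \(Y\) is the normalization of the curve obtained by adjoining \(s\) with \(s^2=c(x-a_1)(x-a_2)\), and \(\alpha=s\,\eta\).
\item The deck involution is \(\iota:s\mapsto -s\).
\item The hyperelliptic lift preserving \(\alpha\) is \(\tau:(x,y,s)\mapsto(x,-y,-s)\).
\end{itemize}
The genus of \(Y\) follows from Riemann--Hurwitz: the cover \(Y\to X\) is branched exactly over the odd zeros of \(q\), so there are \(0,2,4\) branch points and \(g(Y)=3,4,5\). The invariant field of \(\tau\) is generated by \(x\) and \(t=ys\), with
\[
t^2=c\,f(x)(x-a_1)(x-a_2).
\]
After cancelling square factors:
\begin{itemize}[leftmargin=2em]
\item in the \((2,2)\) case this becomes \(u^2=g(x)\) with \(\deg g=4\), so \(g(Z)=1\);
\item in the \((2,1,1)\) case it is a degree-six equation, so \(g(Z)=2\);
\item in the \((1,1,1,1)\) case it is a degree-eight equation, so \(g(Z)=3\).
\end{itemize}
In each case the descended form is \(\omega=t\,\mathrm dx/f(x)\), rewritten in the normalized coordinate, and the residual involution is \(j:t\mapsto -t\). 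This \(j\) is the hyperelliptic involution of \(Z\), consistent with \(Z/\langle j\rangle\cong\mathbb P^1\).

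The divisor computations come next. On \(Z\):
\begin{itemize}[leftmargin=2em]
\item for the \((2,2)\) case, \(\omega=\mathrm dx/u\), which has no zeros;
\item for the \((2,1,1)\) case, \(\omega\) is a constant times \((x-a_2)\,\mathrm dx/u\), and \(x-a_2\) is a branch value of \(u^2\), so \(\omega\) has a single double zero and lies in \(\cH_2(2)\);
\item for the \((1,1,1,1)\) case, \(\omega=(x-a_1)(x-a_2)\,\mathrm dx/u\), which has double zeros at the two Weierstrass points over \(a_1,a_2\); these are fixed by \(j\), giving \(\cH_3(2,2)\).
\end{itemize}
For the branch locus \(B_V\) I will locate the fixed points of \(\tau\), and cross-check their number by Riemann--Hurwitz for \(Y\to Z\): \(2g(Y)-2=2(2g(Z)-2)+\#\mathrm{Fix}(\tau)\), which gives \(4,2,0\) fixed points respectively. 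Fixed points of \(\tau\) lie over the Weierstrass points of \(X\) and require \(s=0\) or \(s=\infty\) locally, that is, \(\tau\) must act trivially on the fibre. Concretely:
\begin{itemize}[leftmargin=2em]
\item in the \((2,2)\) case they are the four points of \(Y\) over the two Weierstrass points \(x=a_1,a_2\), each of which splits in \(Y\) because \(q\) has an even zero there;
\item in the \((2,1,1)\) case they are the two points over \(x=a_1\).
\end{itemize}
Their images in \(Z\) lie over \(x=a_i\), where \(u^2=g\) is unramified, so they are regular points of \(\omega\). They form fibres \(\{(a_i,\pm u)\}\) of \(\rho_Z\), so \(j\) acts on them without fixed points and, in the genus-two case, interchanges the two.

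For the final claim I count zeros: \(\alpha\) has four double zeros, one over each simple zero of \(q\). The map \(\tau\) has no fixed points, and \(\pi_V\) pulls each double zero of \(\omega\) back to two zeros of \(\alpha\), so \(\tau\) acts as two disjoint transpositions on these four zeros.

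The main obstacle will be the local analysis at Weierstrass points and at \(\infty\). One has to confirm in each case which points of \(Y\) are fixed by \(\tau\) rather than exchanged; the sign of \(s\) under \(\tau\) in a uniformizer is what decides this. One also has to confirm that the normalized equation for \(Z\) has no extra branching or zero at infinity. I would handle this by working in local uniformizers at each relevant point and checking the totals against Riemann--Hurwitz and the degree \(2g(Z)-2\) of the canonical divisor.
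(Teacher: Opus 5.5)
Your proposal is correct and follows essentially the same route as the paper's appendix: explicit hyperelliptic models for \(X\), \(Y\), \(Z\), with \(\tau(x,y,s)=(x,-y,-s)\) and the \(\tau\)-invariant field generated by \(x\) and \(ys\) after cancelling square factors, followed by divisor and fixed-point computations in local uniformizers. The differences are cosmetic. The paper uses a quintic model with a Weierstrass point outside \(\operatorname{div}(q)\) placed at infinity, which avoids your root-at-infinity subcases (where, for instance, the \((2,1,1)\) equation has degree five rather than six). It also locates the fixed points of \(\tau\) and the two transpositions by direct local computation, rather than through your Riemann--Hurwitz count for \(Y\to Z\).
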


The proof is given in Appendix~\ref{app:quotients}.

The proposition gives the three concrete forms of the Vasilyev quotient
datum \(\mathcal V(X,q)\) defined in \eqref{eq:vas-data}.

\subsection{Affine functoriality}

The underlying translation quotient alone is not enough for the group
comparison.  We must show that every affine symmetry of \((X,q)\)
preserves the full quotient datum \(\mathcal V(X,q)\).  For a closed
oriented surface \(S\), let \(\Mod(S)\) denote its
orientation-preserving mapping class group.

\begin{lemma}
\label{lem:hyp}
Every element of \(\Aff^+(X,q)\) commutes with the hyperelliptic
involution \(h_X\).
\end{lemma}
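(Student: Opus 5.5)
Let \(\phi\in\Aff^+(X,q)\).  I will show that \(\phi h_X\phi^{-1}\) is again an involution with exactly the properties that characterize \(h_X\), and then invoke uniqueness.

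Since \(\phi\) is an affine homeomorphism that preserves \(\Sigma(q)\), it is locally \(\zeta\mapsto A\zeta+b\) in natural coordinates.  It is therefore quasiconformal rather than holomorphic, so the usual fact that a biholomorphism normalizes the hyperelliptic involution does not apply directly.

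The first step is to prove that \(\psi:=\phi h_X\phi^{-1}\) is holomorphic.  By Lemma~\ref{lem:mult}\textup{(iii)}, \(h_X\) satisfies \(h_X^*q=q\).  It is therefore an affine automorphism of \((X,q)\) with derivative \(\pm I\).  The composition \(\psi\) is then affine, and its derivative is
\[
A(\pm I)A^{-1}=\pm I.
\]
In natural coordinates \(\psi\) is thus locally \(\zeta\mapsto\pm\zeta+c\).  Such maps are holomorphic away from \(\Sigma(q)\) and preserve \(q\).  Since \(\psi\) is a homeomorphism, the removable singularity theorem extends holomorphy across the finite set \(\Sigma(q)\).  Hence \(\psi\) is a conformal automorphism of \(X\) with \(\psi^*q=q\) and \(\psi^2=\id_X\).

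The second step identifies \(\psi\) with \(h_X\).  There are two routes, and I would use the first, keeping the second as a cross-check.

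The first route is topological.  The hyperelliptic involution of a genus-two surface is the unique conformal involution with six fixed points, equivalently with quotient of genus zero.  The fixed-point set of \(\psi\) is \(\phi(\operatorname{Fix}h_X)\), which has six points.  By Riemann--Hurwitz, \(X/\langle\psi\rangle\) then has genus zero, so \(\psi\) is a hyperelliptic involution.  Uniqueness of the hyperelliptic involution gives \(\psi=h_X\), that is, \(\phi h_X=h_X\phi\).

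The second route is through the mapping class group.  In genus two the hyperelliptic mapping class is central in \(\Mod(X)\).  So \(\psi\) is isotopic to \(h_X\).  Two conformal automorphisms of a surface of genus at least two that are isotopic are equal, since \(\Aut(X)\) injects into \(\Mod(X)\).  This again gives \(\psi=h_X\).

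The only delicate step is checking that \(\psi\) is genuinely holomorphic.  This is where the sign ambiguity in natural coordinates matters.  The derivative of \(\psi\) is only defined up to \(\pm\), but both \(I\) and \(-I\) are complex-linear, so conjugating by \(A\) still produces \(\pm I\).  The removable-singularity argument at the cone points also needs \(\psi\) to be a homeomorphism that is holomorphic on the punctured surface, and this holds here.
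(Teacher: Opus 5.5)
Your proof is correct, and your first route differs from the paper's. The paper never shows directly that \(\phi h_X\phi^{-1}\) is conformal. It first proves that \(\Aff^+(X,q)\to\Mod(X)\) is injective: an affine map isotopic to the identity must have conformal linear part by Teichm\"uller extremality, and a conformal automorphism isotopic to the identity is trivial. It then uses Birman--Hilden centrality of \([h_X]\) in \(\Mod(X)\) to get \([\phi h_X\phi^{-1}]=[h_X]\), and concludes by injectivity.

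You instead use the fact that the derivative of \(h_X\) is \(\pm I\), which is central, so \(\psi=\phi h_X\phi^{-1}\) is immediately a conformal involution preserving \(q\). After that, Riemann--Hurwitz and the classical uniqueness of the hyperelliptic involution finish the argument, with no Teichm\"uller theory and no mapping-class-group input. This is more elementary and also more general. The same argument shows that on any hyperelliptic surface of genus \(g\geq2\) with \(h_X^*q=q\), every affine automorphism commutes with \(h_X\). The paper's Birman--Hilden step, by contrast, is special to genus two, since the mapping class group has trivial center in genus at least three.

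Your second route is a hybrid. It keeps the Birman--Hilden centrality, but because \(\psi\) is already known to be conformal, it only needs injectivity of \(\Aut(X)\to\Mod(X)\), so it bypasses the Teichm\"uller extremality step. What the paper's version buys is a principle that does not depend on the special derivative \(\pm I\): any element of \(\Aff^+(X,q)\) whose mapping class is central in \(\Mod(X)\) is central in \(\Aff^+(X,q)\).
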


\begin{proof}
The natural homomorphism
\[
  \Aff^+(X,q)\longrightarrow\Mod(X)
\]
is injective.  Indeed, an affine automorphism with nonconformal linear
part is the Teichm\"uller extremal map in its isotopy class, whereas
the identity class has extremal dilatation one.  Hence an affine
automorphism isotopic to the identity must have conformal linear part.
It is then a conformal automorphism of \(X\), and a conformal
automorphism of a closed Riemann surface of genus at least two that is
isotopic to the identity is the identity.  See
\cite{Veech1989,MollerAffineGroups}.

By the Birman--Hilden theorem, the mapping class of the hyperelliptic
involution is central in the genus-two mapping class group:
\[
  [h_X]\in Z(\Mod(X));
\]
see \cite{BirmanHilden1971,BirmanHilden1973}.  Lemma~\ref{lem:mult}
gives \(h_X^*q=q\), so \(h_X\in\Aff^+(X,q)\).  For every
\(\phi\in\Aff^+(X,q)\), centrality gives
\[
  [\phi h_X\phi^{-1}]=[h_X].
\]
Injectivity now implies \(\phi h_X\phi^{-1}=h_X\).
\end{proof}

\begin{remark}
General results on affine coverings compare Veech groups only after the
relevant ramification and branch data have been retained.  In
particular, branching over regular points may change the
commensurability class obtained after the branch values are forgotten;
see \cite{HubertSchmidt2000,HubertSchmidt2001}.  This is why the regular
branch-value set \(B_V\) is retained in \(\mathcal V(X,q)\).  The
involution \(j\) is separate equivariant structure, and is retained
because affine maps descending from \((X,q)\) must commute with it.
The proposition below gives the derivative-preserving inclusion
needed in this paper, rather than only a commensurability statement.
No converse lifting assertion is required.
\end{remark}

\begin{proposition}
	\label{prop:descent}
	Let \((X,q)\) be a genus-two half-translation surface with \(q\)
	nonsquare, and let
	\[
	\mathcal V(X,q)=(Z,\omega;B_V,j)
	\]
	be its Vasilyev datum.  Then
	\begin{equation}\label{eq:desc-inc}
		\Gamma(X,q)
		\subseteq
		\Gamma\bigl(\mathcal V(X,q)\bigr)
		\subseteq
		\Gamma(Z,\omega).
	\end{equation}
\end{proposition}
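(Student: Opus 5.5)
The second inclusion in \eqref{eq:desc-inc} is immediate from the definitions: \(\Aff^+(Z,\omega;B_V,j)\) is a subgroup of \(\Aff^+(Z,\omega)\), so its image under \(\overline D\) lies in \(\Gamma(Z,\omega)\). The content is the first inclusion. The plan is to take \(\phi\in\Aff^+(X,q)\) and carry out three steps. First, lift \(\phi\) to \(Y\) by Lemma~\ref{lem:lift}. Second, choose the lift so that it commutes with \(\tau\). Third, push it down to \(Z\) and check that the resulting map lies in \(\Aff^+(Z,\omega;B_V,j)\) and has the same projective derivative.

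\emph{Step 1: commuting with \(\tau\).} Let \(\widehat\phi\) be a lift of \(\phi\) given by Lemma~\ref{lem:lift}. It commutes with \(\iota\) and has derivative \(B=\pm A\), where \(A\) represents \(\overline D(\phi)\). Consider \(\psi:=\widehat\phi\,\tau\,\widehat\phi^{-1}\). Since \(\pi_{\mathrm{or}}\circ\widehat\phi=\phi\circ\pi_{\mathrm{or}}\), the map \(\psi\) covers \(\phi h_X\phi^{-1}\). By Lemma~\ref{lem:hyp} this equals \(h_X\). So \(\psi\) is a lift of \(h_X\), and hence \(\psi\in\{\tau,\tau\iota\}\). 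Its derivative is \(B\cdot I\cdot B^{-1}=I\), so \(\psi^*\alpha=\alpha\). The lift \(\tau\iota\) satisfies \((\tau\iota)^*\alpha=-\alpha\), so \(\psi=\tau\). Thus both lifts of \(\phi\) commute with \(\tau\), and also with \(\iota\).

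\emph{Step 2: descent.} Because \(\widehat\phi\) commutes with \(\tau\), it permutes \(\tau\)-orbits and induces a homeomorphism \(\bar\phi:Z\to Z\) with \(\bar\phi\circ\pi_V=\pi_V\circ\widehat\phi\). Away from the branch values, \(\pi_V\) is a local isometry of translation charts, since \(\pi_V^*\omega=\alpha\). Hence \(\bar\phi\) is affine there with derivative \(B\). The branch points are finitely many, and \(\bar\phi\) is Lipschitz, so it extends over them exactly as in the proof of Lemma~\ref{lem:lift}.

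It then remains to check that \(\bar\phi\) preserves the required data.
\begin{enumerate}
\item \(\bar\phi\) maps \(\Sigma(\omega)\) to itself, because the cone angles are preserved by an affine map.
\item \(\bar\phi\) preserves \(B_V\). This set is the image under \(\pi_V\) of the fixed points of \(\tau\), and \(\widehat\phi\) conjugates \(\tau\) to itself, so it maps \(\mathrm{Fix}(\tau)\) onto itself.
\item \(\bar\phi\) commutes with \(j\). This follows from \(\widehat\phi\iota=\iota\widehat\phi\) together with \(j\circ\pi_V=\pi_V\circ\iota\), using surjectivity of \(\pi_V\).
\end{enumerate}
The derivative of \(\bar\phi\) is \(B=\pm A\), so \(\overline D(\bar\phi)=\overline D(\phi)\). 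This proves \(\Gamma(X,q)\subseteq\Gamma(\mathcal V(X,q))\).

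\emph{Main obstacle.} The key point is Step 1, the choice between \(\tau\) and \(\tau\iota\). It requires both Lemma~\ref{lem:hyp}, to identify the image as a lift of \(h_X\), and the derivative computation, to rule out \(\tau\iota\). A smaller point is the extension across the cone points and branch values. There one must also check that \(\bar\phi\) is holomorphic-affine, not merely a homeomorphism. This follows because, on the complement of finitely many points, \(\bar\phi\) is a homeomorphism that is affine in charts.
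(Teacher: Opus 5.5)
Your proof is correct and follows the paper's argument essentially step for step: Lemma~\ref{lem:hyp} together with the derivative computation forces \(\widehat\phi\tau\widehat\phi^{-1}=\tau\). The lift then descends through \(\pi_V\), preserves \(\operatorname{Fix}(\tau)\) and hence \(B_V\), commutes with \(j\) by surjectivity of \(\pi_V\), and has projective derivative \(\overline D(\phi)\) because \(\pi_V^*\omega=\alpha\); the only cosmetic point is that no separate extension step is needed, since \(\bar\phi\) is already a homeomorphism of all of \(Z\) that is affine off a finite set, and ``holomorphic-affine'' should read ``affine in natural coordinates''.
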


\begin{proof}
Let \(\gamma\in\Gamma(X,q)\), and choose
\(\phi\in\Aff^+(X,q)\) with \(\overline D(\phi)=\gamma\).  By
Lemma~\ref{lem:lift}, \(\phi\) has an affine lift
\(\widehat\phi\in\Aff^+(Y,\alpha;\iota)\) with the same projective
derivative.

We claim that \(\widehat\phi\) commutes with \(\tau\).  By
Lemma~\ref{lem:hyp}, \(\phi\) commutes with \(h_X\),
so
\(
\widehat\phi\tau\widehat\phi^{-1}
\)
is a lift of \(h_X\).  The two lifts of \(h_X\) are \(\tau\) and
\(\iota\tau\), with actual derivatives \(I\) and \(-I\), respectively.
If \(A=D\widehat\phi\), then
\[
  D(\widehat\phi\tau\widehat\phi^{-1})=AIA^{-1}=I.
\]
Therefore \(\widehat\phi\tau\widehat\phi^{-1}=\tau\).

It follows that \(\widehat\phi\) descends uniquely through
\(\pi_V:Y\to Z\) to an affine automorphism
\(\phi_Z\in\Aff^+(Z,\omega)\) satisfying
\[
  \phi_Z\circ\pi_V=\pi_V\circ\widehat\phi.
\]
The identity \(\pi_V^*\omega=\alpha\) implies
\(\overline D(\phi_Z)=\gamma\).  Since \(\widehat\phi\) commutes with
\(\tau\), it preserves the fixed-point set \(\operatorname{Fix}(\tau)\)
of \(\tau\), and hence
\(\phi_Z(B_V)=B_V\).  Moreover,
\[
\begin{aligned}
  \phi_Z\circ j\circ\pi_V
  &=\phi_Z\circ\pi_V\circ\iota
   =\pi_V\circ\widehat\phi\circ\iota\\
  &=\pi_V\circ\iota\circ\widehat\phi
   =j\circ\phi_Z\circ\pi_V.
\end{aligned}
\]
The surjectivity of \(\pi_V\) gives
\(\phi_Z\circ j=j\circ\phi_Z\).  Thus
\(\phi_Z\in\Aff^+(\mathcal V(X,q))\), proving the first inclusion in
\eqref{eq:desc-inc}.  The second follows from
\(
\Aff^+(\mathcal V(X,q))\subseteq\Aff^+(Z,\omega)
\).
\end{proof}

\subsection{Arithmeticity consequences}

The quotient construction separates the three nonsquare strata by the
genus of \(Z\).  We record the arithmeticity of the two lower-genus
cases.  These consequences will also eliminate two nonsquare cases in
the equality analysis.

A translation surface is called
\emph{primitive} if it admits no translation covering of degree greater
than one onto a translation surface of smaller genus.  In genus two, a
nonprimitive translation surface therefore admits a translation
covering onto a translation torus.

\begin{proposition}\label{prop:q22-arith}
Every Veech surface in \(\cQ_2^{\mathrm{ns}}(2,2)\) is arithmetic.
\end{proposition}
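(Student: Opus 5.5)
By Proposition~\ref{prop:quot-data}, the Vasilyev quotient \((Z,\omega)\) is a flat torus. By Proposition~\ref{prop:descent}, \(\Gamma(X,q)\subseteq\Gamma(Z,\omega;B_V,j)\subseteq\Gamma(Z,\omega)\). The plan is to show that \(\Gamma(X,q)\) sits with finite index inside a group commensurable with \(\PSL_2(\Z)\). Since \((X,q)\) is a Veech surface, \(\Gamma(X,q)\) is a lattice. Lemma~\ref{lem:index} then gives that every intermediate group in the chain is a lattice and contains \(\Gamma(X,q)\) with finite index. So it suffices to show that \(\Gamma(Z,\omega;B_V,j)\) is commensurable, up to conjugacy, with \(\PSL_2(\Z)\).

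After applying an element of \(\GL_2^+(\R)\), which conjugates all Veech groups simultaneously and does not affect arithmeticity, we may assume that \((Z,\omega)=\C/\Z^2\) with \(\omega=dz\). Then \(\Gamma(Z,\omega)=\PSL_2(\Z)\): every affine automorphism of the torus has derivative in \(\SL_2(\Z)\), and every such matrix is realized. It remains to check that the marked subgroup \(\Gamma(Z,\omega;B_V,j)\) is commensurable with \(\PSL_2(\Z)\), not merely contained in it. Containment together with the lattice property already gives finite index, by Lemma~\ref{lem:index} applied to \(\Gamma(Z,\omega;B_V,j)\subseteq\PSL_2(\Z)\), since \(\Gamma(X,q)\) has finite covolume.

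So the argument closes immediately: \(\Gamma(X,q)\) is a finite-covolume subgroup of \(\PSL_2(\Z)\) after conjugation, hence of finite index, hence commensurable with \(\PSL_2(\Z)\). The only step needing care is the normalization \(\Gamma(\C/\Lambda,dz)\) conjugate to \(\PSL_2(\Z)\). For a general lattice \(\Lambda=g\Z^2\) with \(g\in\GL_2^+(\R)\), affine self-maps have linear parts preserving \(\Lambda\), namely \(g\SL_2(\Z)g^{-1}\), and the projective group is the corresponding conjugate after rescaling to determinant one. I expect no serious obstacle.

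A secondary remark: if one prefers a finer statement, the marked points \(B_V\) can be chosen to be torsion points of the torus (commensurability would still follow from the lattice property alone). This is not needed, because the inclusion from Proposition~\ref{prop:descent} plus finite covolume suffices.
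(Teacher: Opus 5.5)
Your proof is correct and is essentially the paper's argument: the torus quotient from Proposition~\ref{prop:quot-data}, the inclusion \(\Gamma(X,q)\subseteq\Gamma(Z,\omega)\) from Proposition~\ref{prop:descent}, the normalization of the period lattice giving \(\Gamma(Z,\omega)=\PSL_2(\Z)\), and Lemma~\ref{lem:index} to conclude finite index and hence commensurability. The detour through the marked group \(\Gamma(Z,\omega;B_V,j)\) and the torsion-point remark are harmless but not needed.
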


\begin{proof}
By Proposition~\ref{prop:quot-data}, the quotient \((Z,\omega)\)
is a translation torus.  After a linear normalization of its period
lattice,
\[
  \Gamma(Z,\omega)=\PSL_2(\Z).
\]
By Proposition~\ref{prop:descent},
\[
  \Gamma(X,q)\subseteq\Gamma(Z,\omega).
\]
If \((X,q)\) is a Veech surface, then \(\Gamma(X,q)\) has finite
covolume.  A finite-covolume Fuchsian subgroup of the lattice
\(\PSL_2(\Z)\) has finite index, by Lemma~\ref{lem:index}.
Hence \(\Gamma(X,q)\) is commensurable with \(\PSL_2(\Z)\), so
\((X,q)\) is arithmetic.
\end{proof}

\begin{proposition}\label{prop:q211-arith}
	Every Veech surface in \(\cQ_2(2,1,1)\) is arithmetic.
\end{proposition}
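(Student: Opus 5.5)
We follow the pattern of Proposition~\ref{prop:q22-arith}, with the torus replaced by a genus-two quotient. By Proposition~\ref{prop:quot-data}, when \((X,q)\in\cQ_2(2,1,1)\) the Vasilyev quotient \((Z,\omega)\) lies in \(\cH_2(2)\). The marked set \(B_V\) consists of two regular points, and \(j\) interchanges them. Proposition~\ref{prop:descent} gives
\[
  \Gamma(X,q)\subseteq\Gamma(Z,\omega;B_V,j)\subseteq\Gamma(Z,\omega).
\]
If \((X,q)\) is Veech, then \(\Gamma(X,q)\) has finite covolume. Lemma~\ref{lem:index} then shows that \(\Gamma(Z,\omega;B_V)\) and \(\Gamma(Z,\omega)\) are lattices containing \(\Gamma(X,q)\) with finite index, so all three groups are commensurable. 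It therefore suffices to show that \((Z,\omega)\) is arithmetic.

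If \((Z,\omega)\) is not primitive, it covers a translation torus by a translation covering. Its Veech group is then commensurable with \(\PSL_2(\Z)\) by Gutkin--Judge (after normalizing the period lattice), and we are done.

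So assume \((Z,\omega)\) is a primitive Veech surface in \(\cH_2(2)\). We use that \(\Gamma(Z,\omega;B_V)\) is a lattice, which means \(Z\) carries a finite nonempty set of regular points that is invariant under a finite-index subgroup of \(\Aff^+(Z,\omega)\). Each point of \(B_V\) then has a finite orbit and is a periodic point. Möller's classification \cite{Moller2006} says that on a primitive genus-two Veech surface the only periodic points are the Weierstrass points, apart from the zeros. So each point of \(B_V\) is a regular Weierstrass point and is fixed by the hyperelliptic involution \(h_Z\).

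We now obtain the contradiction from \(j\).

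\begin{itemize}
\item Since \(j^*\omega=-\omega\), the map \(j\) is an affine involution with derivative \(-I\).
\item On a genus-two translation surface, such an involution is the hyperelliptic involution: \(jh_Z\) has derivative \(I\), hence is a translation automorphism, and on a primitive surface in \(\cH_2(2)\) with a single zero this forces \(jh_Z=\id\). Alternatively, \(jh_Z\) is a conformal automorphism preserving \(\omega\), and in \(\cH_2(2)\) it fixes the unique zero; a nontrivial such map would give a translation covering and contradict primitivity.
\item Hence \(j=h_Z\), so \(j\) fixes every Weierstrass point.
\end{itemize}

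This contradicts the fact that \(j\) interchanges the two points of \(B_V\), which we showed are Weierstrass points. Therefore \((Z,\omega)\) is not primitive, and \((X,q)\) is arithmetic.

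The main obstacle is the periodic-point step: we must check that Möller's theorem applies to points whose orbit is finite under the full affine group, and that invariance of \(B_V\) under the finite-index subgroup \(\Aff^+(Z,\omega;B_V,j)\) really makes each point of \(B_V\) periodic in Möller's sense. If it turns out instead that \(j\neq h_Z\) can occur, the fallback is to use Proposition~\ref{prop:quot-data} directly. That proposition shows \(Z/\langle j\rangle\cong\mathbb P^1\). With \(j^*\omega=-\omega\), Riemann--Hurwitz then gives \(j\) six fixed points, so \(j\) is the hyperelliptic involution.
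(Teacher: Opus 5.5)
Your argument is correct and is essentially the paper's proof. It uses the same descent inclusions and the same primitive/nonprimitive split via Gutkin--Judge; the paper additionally notes that the torus covering is branched over at most one point, because the unique zero is the only possible ramification point. It also reaches the same contradiction between M\"oller's periodic-point theorem and the fact that \(j\) exchanges the two points of \(B_V\). The paper takes \(j\) to be the hyperelliptic involution of \(Z\) directly from Proposition~\ref{prop:cover-model}, which is your fallback.

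The step you flag as the main obstacle is settled as in the paper. The kernel of \(\overline D\) on \(\Aff^+(Z,\omega)\) consists of conformal automorphisms, so it is finite. Hence \(\Aff^+(Z,\omega;B_V,j)\) has finite index in \(\Aff^+(Z,\omega)\), and each point of \(B_V\) has finite affine orbit.
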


\begin{proof}
	Let \((X,q)\in\cQ_2(2,1,1)\) be a Veech surface.  By
	Proposition~\ref{prop:quot-data},
	\[
	(Z,\omega)\in\cH_2(2),
	\]
	and the branch-value set
	\[
	B_V=\{b_1,b_2\}
	\]
	consists of two regular points exchanged by the hyperelliptic
	involution \(j\) of \(Z\).  Since the Weierstrass points of \(Z\) are
	precisely the fixed points of \(j\), neither \(b_1\) nor \(b_2\) is a
	Weierstrass point.
	
	By Proposition~\ref{prop:descent},
	\[
	\Gamma(X,q)
	\subseteq
	\Gamma\bigl(\mathcal V(X,q)\bigr)
	\subseteq
	\Gamma(Z,\omega).
	\]
	Since \(\Gamma(X,q)\) is a lattice,
	Lemma~\ref{lem:index} shows that the other two groups are also
	lattices and that both inclusions have finite index.
	
	We next compare the corresponding affine automorphism groups.  The
	projective derivative maps give a commutative diagram
	\[
	\begin{tikzcd}[column sep=large,row sep=large]
		\Aff^+\bigl(\mathcal V(X,q)\bigr)
		\arrow[r,hook]
		\arrow[d,"\overline D"']
		&
		\Aff^+(Z,\omega)
		\arrow[d,"\overline D"]
		\\
		\Gamma\bigl(\mathcal V(X,q)\bigr)
		\arrow[r,hook,"\text{\rm finite index}"']
		&
		\Gamma(Z,\omega).
	\end{tikzcd}
	\]
	The vertical maps are surjective by definition.  The kernel of the
	right-hand vertical map is finite.  Indeed, an affine automorphism
	with trivial projective derivative has actual derivative \(I\) or
	\(-I\), and is therefore a conformal automorphism of the genus-two
	Riemann surface \(Z\).  The conformal automorphism group of \(Z\) is
	finite.  It follows that
	\[
	\left[
	\Aff^+(Z,\omega):
	\Aff^+\bigl(\mathcal V(X,q)\bigr)
	\right]
	<\infty.
	\]	
	By definition,
	\(\Aff^+\bigl(\mathcal V(X,q)\bigr)\) preserves
	\(B_V=\{b_1,b_2\}\).  Choose finitely many left-coset representatives
	\(F_1,\ldots,F_m\) for
	\(\Aff^+\bigl(\mathcal V(X,q)\bigr)\) in
	\(\Aff^+(Z,\omega)\).  Then, for \(i=1,2\),
	\[
	\Aff^+(Z,\omega)\cdot b_i
	\subseteq
	\bigcup_{k=1}^m F_k(B_V).
	\]
	Thus each \(b_i\) has finite orbit under \(\Aff^+(Z,\omega)\).
	
	Suppose, for contradiction, that \((Z,\omega)\) is nonarithmetic.
	Then it is primitive.  Indeed, a nonprimitive genus-two translation
	surface admits a translation covering onto a torus.  In the stratum
	\(\cH_2(2)\), the unique zero is the only possible ramification point,
	so the torus covering is branched over at most one point.  The
	Gutkin--Judge arithmeticity criterion would then imply that
	\((Z,\omega)\) is arithmetic, a contradiction; see
	\cite[Theorem~5.5]{GutkinJudge2000}.

	M\"oller's periodic-point theorem now applies to the primitive
	Veech surface \((Z,\omega)\): every regular point with finite orbit
	under its affine automorphism group is a Weierstrass point; see
	\cite[Theorem~5.1]{Moller2006}.  This contradicts the fact that
	\(b_1\) is regular, has finite affine orbit, and is not a Weierstrass
	point.  Hence \((Z,\omega)\) is arithmetic.
	
	Finally, Lemma~\ref{lem:index} gives
	\[
	[\Gamma(Z,\omega):\Gamma(X,q)]<\infty.
	\]
	Hence \(\Gamma(X,q)\) is commensurable with the arithmetic lattice
	\(\Gamma(Z,\omega)\), and therefore \((X,q)\) is arithmetic.
\end{proof}

The quotient construction is now in the form needed below.  The
appendix proves the three quotient data, Proposition~\ref{prop:descent}
gives the Veech-group inclusions associated with the Vasilyev quotient,
and the two lower-genus quotient cases are arithmetic.  We now use these
results to exclude the full modular group.

\section{Exclusion of the modular group}
\label{sec:modular}

The purpose of this section is to prove
Theorem~\ref{thm:modular}.  This also completes the
lower-bound argument by Corollary~\ref{cor:reduction}.
Suppose, for contradiction, that the projective Veech group of
\((X,q)\) is conjugate to \(\PSL_2(\Z)\).  After applying a suitable
element of \(\SL_2(\R)\), we may assume that
\[
  \Gamma(X,q)=\PSL_2(\Z).
\]

We first treat the square locus.  For the stratum
\(\cQ_2(2,1,1)\), the Vasilyev quotient transfers the modular symmetry
to a translation surface in \(\cH_2(2)\).  For the remaining two
nonsquare strata, we work directly on the canonical orientation cover.
The projective element of order three in the modular group is the main
obstruction; in the latter two cases we also use the kernel of the
actual derivative map and its induced action on the zeros of the
orientation-cover form.

\subsection{The square locus}

\begin{proposition}\label{prop:sq-mod}
  There is no genus-two translation surface \((X,\omega)\) such that
  the associated square half-translation surface \((X,\omega^2)\)
  has projective Veech group
  \[
    \Gamma(X,\omega^2)=\PSL_2(\Z).
  \]
\end{proposition}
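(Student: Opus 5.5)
We split according to the stratum of \((X,\omega)\), which is either \(\cH_2(2)\) or \(\cH_2(1,1)\), exactly as announced in the introduction. First note that \(\Gamma(X,\omega^2)=\Gamma(X,\omega)\): an affine automorphism of \((X,\omega^2)\) pulls \(\omega\) back to \(\pm\) the form obtained by applying its derivative, so it is an affine automorphism of \((X,\omega)\) with actual derivative \(A\) or \(-A\). The converse inclusion is clear. So we must rule out \(\Gamma(X,\omega)=\PSL_2(\Z)\), the projectivization of the actual Veech group \(\SL(X,\omega)\).

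\emph{Case \(\cH_2(2)\).} The group \(\PSL_2(\Z)\) contains a projective elliptic element of order three, namely the class of \(\begin{pmatrix}0&-1\\1&1\end{pmatrix}\). Hubert--Leli\`evre's obstruction \cite{HubertLelievre2006} states that no Veech group of a surface in \(\cH_2(2)\) contains a projective elliptic element of order three. This case is therefore immediate once we check that their statement is about projective classes, which it is.

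\emph{Case \(\cH_2(1,1)\).} Here \(\PSL_2(\Z)\) contains the class of \(S=\begin{pmatrix}0&-1\\1&0\end{pmatrix}\), whose square is \(-I\). We lift to \(\phi\in\Aff^+(X,\omega)\) with actual derivative \(\pm S\). Then \(\phi^2\) has derivative \(-I\), so \(\phi^2\) is a conformal automorphism with \(\phi^{2*}\omega=-\omega\).

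The hyperelliptic involution \(h_X\) also satisfies \(h_X^*\omega=-\omega\). Hence \(\phi^2h_X\) is a translation automorphism of \((X,\omega)\). We want to show \(\phi^2=h_X\). A translation automorphism commutes with \(h_X\) by Lemma~\ref{lem:hyp}, so it descends to an automorphism of the genus-zero quotient fixing the branch data. Alternatively, in genus two the translation automorphism group of a surface in \(\cH_2(1,1)\) is trivial unless the surface is a cover of a torus; we would handle that subcase separately, or avoid it as follows.

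The main point concerns the two simple zeros \(z_1,z_2\). The hyperelliptic involution exchanges them: they are not Weierstrass points, since a zero of \(\omega\) at a Weierstrass point has even order. Now \(\phi\) permutes \(\{z_1,z_2\}\), so \(\phi^2\) fixes each zero. At a fixed simple zero, cone angle \(4\pi\), the local action of \(\phi^2\) with derivative \(-I\) is rotation by \(\pi+2\pi k\) in the \(4\pi\) cone, with rotation angle \(\pi\) or \(3\pi\). The linear part \(S\) of \(\phi\) at a fixed zero acts as rotation by \(\pi/2\) modulo \(2\pi\), so its square is rotation by \(\pi\) or \(3\pi\) around the zero, consistent so far.

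Instead we compare with \(h_X\). The map \(\psi=\phi^2h_X\) is a translation automorphism that swaps \(z_1\) and \(z_2\). By Lemma~\ref{lem:hyp} and the injectivity into \(\Mod(X)\), \(\phi^2\) is a conformal automorphism commuting with all of \(\Aff^+\). We expect the contradiction to come from showing \(\phi^2=h_X\), which cannot hold because \(\phi^2\) fixes the zeros while \(h_X\) swaps them. To prove \(\phi^2=h_X\), we show that the kernel of the actual derivative restricted to translations is trivial here. If \(\psi\ne\id\) is a nontrivial translation automorphism, then \(X\to X/\langle\psi\rangle\) is an unramified or ramified translation cover of a torus. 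We argue from Riemann--Hurwitz and the zero data: a translation automorphism swapping two simple zeros gives a degree-two cover of genus one branched at two regular points. In that arithmetic case we compute \(\Gamma\) directly. The \(-I\) element then lies in the lattice of the torus cover, and a parity check on the branch points under \(S\) should give the contradiction.

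We expect this last subcase, where \(X\) double covers a torus and there are extra translations, to be the main obstacle. We would first try to avoid it by choosing the lift \(\phi\) so that \(\phi^2\) equals \(h_X\), composing with \(\psi\) if needed, and then use the fixed-versus-swapped zero contradiction.
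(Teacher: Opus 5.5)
Your reduction to the actual Veech group and your treatment of \(\cH_2(2)\) via Hubert--Leli\`evre match the paper. The gap is in \(\cH_2(1,1)\). You correctly arrive at \(\psi=\phi^2h_X\), an affine automorphism with actual derivative \(I\). It exchanges the two simple zeros, because \(\phi^2\) fixes each zero while \(h_X\) swaps them. But you never turn this into a contradiction.

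The missing observation is elementary. A nontrivial automorphism with derivative \(I\) has no regular fixed points: in a translation chart it is \(z\mapsto z+c\), a fixed point forces \(c=0\), so the map is the identity near that point and hence everywhere. Since \(\psi\) also fixes neither zero, it acts freely. It is nontrivial (it moves \(z_1\)) and is a conformal automorphism of a genus-two surface, so it has finite order \(n\geq2\). Riemann--Hurwitz for the unramified quotient then gives \(2=n(2g_0-2)\), which has no solution. This is exactly the paper's argument. The paper phrases it as: every element of \(\ker D\) fixes both zeros individually, whereas \(F^2h_X\in\ker D\) exchanges them.

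Your proposed handling of the torus-cover subcase is incorrect as stated. A translation automorphism swapping the zeros cannot give ``a degree-two cover of genus one branched at two regular points'', since it has no fixed points at all. Moreover, the translation involutions that exhibit a genus-two surface in \(\cH_2(1,1)\) as a double cover of a torus are ramified precisely at the two zeros, so they fix the zeros rather than exchange them. For the same reason, your intermediate target is false in general. The kernel of \(D\) need not be trivial, i.e.\ \(\phi^2=h_X\) can fail, because such torus double covers carry a nontrivial translation involution. What is true, and sufficient, is the weaker statement that no element of the kernel exchanges the zeros.

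Two further points. The ``parity check on the branch points under \(S\)'' is never specified. And the assertion that \(\phi^2\) commutes with all of \(\Aff^+\) does not follow from Lemma~\ref{lem:hyp}, which concerns \(h_X\) only; fortunately it is not needed.
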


\begin{proof}
  Suppose that such a translation surface \((X,\omega)\) exists, and
  set \(q=\omega^2\).  Then
  \[
    \Gamma(X,q)=\PSL_2(\Z).
  \]
  Let
  \[
    \operatorname{pr}:\SL_2(\R)\longrightarrow\PSL_2(\R)
  \]
  be the natural projection.

  We first recover the actual Veech group of the translation surface
  \((X,\omega)\).  Every affine automorphism of \((X,\omega)\) is
  affine for the half-translation structure defined by \(q\).
  Conversely, let \(f\in\Aff^+(X,q)\).  In translation coordinates
  obtained by integrating \(\omega\), the derivative of \(f\) is
  locally one of the two lifts of its projective derivative.  The
  choice of sign is locally constant on the connected surface
  \(X\setminus Z(\omega)\), and hence is constant.  Thus \(f\) is
  also affine for \((X,\omega)\), and
  \[
    \operatorname{pr}\bigl(\SL(X,\omega)\bigr)
    =\Gamma(X,q)=\PSL_2(\Z).
  \]
  The genus-two hyperelliptic involution \(h_X\) satisfies
  \(h_X^*\omega=-\omega\), so it has actual derivative \(-I\).
  Therefore
  \[
    \SL(X,\omega)
    =\operatorname{pr}^{-1}\bigl(\PSL_2(\Z)\bigr)
    =\SL_2(\Z).
  \]

  There are two possible zero patterns of \(\omega\).  Suppose first
  that \(\omega\in\cH_2(2)\).  The matrix
  \[
    \begin{pmatrix}
      0&-1\\
      1&1
    \end{pmatrix}
    \in\SL_2(\Z)
  \]
  has cube \(-I\), so its projective class is an elliptic element of
  order three.  This contradicts
  \cite[Proposition~4.5]{HubertLelievre2006}, which states that the
  projective Veech group of a translation surface in \(\cH_2(2)\)
  contains no elliptic element of order three.

  It remains to consider \(\omega\in\cH_2(1,1)\). Assume that
  \(\{P_1,P_2\}\) is the zero set of \(\omega\).
  Since \(h_X^*\omega=-\omega\), the hyperelliptic involution
  preserves this two-point set.  Neither zero is fixed by \(h_X\).
  Indeed, near a fixed point choose a coordinate \(z\) in which
  \(h_X(z)=-z\), and write \(\omega=f(z)\,\mathrm{d}z\).  The identity
  \(h_X^*\omega=-\omega\) gives \(f(-z)=f(z)\), so a zero fixed by
  \(h_X\) has even order.  Since \(P_1\) and \(P_2\) are simple,
  \[
    h_X(P_1)=P_2,
    \qquad
    h_X(P_2)=P_1.
  \]

  Let
  \[
    K:=\ker\left(
      D:\Aff^+(X,\omega)\longrightarrow\SL_2(\R)
    \right).
  \]
  We claim that every element of \(K\) fixes \(P_1\) and \(P_2\)
  individually.  Suppose that a nontrivial \(k\in K\) exchanges them.
  Since \(Dk=I\), the map \(k\) is locally a translation at every
  regular point and has no regular fixed point.  It fixes neither zero,
  so it acts freely on \(X\).  As a conformal automorphism of a
  genus-two Riemann surface, \(k\) has finite order, say \(n\geq2\).
  The unramified quotient would then satisfy
  \[
    2=2g(X)-2=n\bigl(2g(X/\langle k\rangle)-2\bigr),
  \]
  which is impossible.  This proves the claim.

  Now choose \(F\in\Aff^+(X,\omega)\) such that
  \[
    DF=
    \begin{pmatrix}
      0&-1\\
      1&0
    \end{pmatrix}.
  \]
  Such an affine automorphism exists because
  \(\SL(X,\omega)=\SL_2(\Z)\).  The map \(F\) preserves the set
  \(\{P_1,P_2\}\), so the square of its induced permutation is the
  identity.  Hence
  \[
    F^2(P_i)=P_i,
    \qquad i=1,2.
  \]
  On the other hand,
  \[
    D(F^2\circ h_X)
    =
    \begin{pmatrix}
      0&-1\\
      1&0
    \end{pmatrix}^{\!2}
    (-I)
    =I.
  \]
  Thus \(F^2\circ h_X\in K\), and the claim says that this map fixes
  both zeros individually.  But \(h_X\) exchanges the two zeros while
  \(F^2\) fixes them individually, so \(F^2\circ h_X\) exchanges them.
  This is a contradiction.
\end{proof}

\subsection{Actual derivatives on the orientation cover}

From now on, suppose that \(q\) is nonsquare and let
\[
  \pi_{\mathrm{or}}:(Y,\alpha)\longrightarrow(X,q)
\]
be its canonical orientation cover, with deck involution \(\iota\).
Recall that
\[
  \Aff^+(Y,\alpha;\iota)
  =
  \{F\in\Aff^+(Y,\alpha):F\circ\iota=\iota\circ F\}.
\]
The lifting and descent statements of
Lemma~\ref{lem:lift} give
\[
  D\bigl(\Aff^+(Y,\alpha;\iota)\bigr)
  =
  \operatorname{pr}^{-1}\bigl(\Gamma(X,q)\bigr),
\]
where
\[
  \operatorname{pr}:\SL_2(\R)\longrightarrow\PSL_2(\R)
\]
is the natural projection.  Indeed, an affine automorphism commuting
with \(\iota\) descends to \((X,q)\).  Conversely, every affine
automorphism of \((X,q)\) has two lifts to \(Y\), differing by
\(\iota\).  If one lift has derivative \(A\), then the other has
derivative \(-A\), since \(D\iota=-I\).  Thus both elements of
\(\operatorname{pr}^{-1}([A])=\{A,-A\}\) occur.

Let \(\tau\) be the lift of the hyperelliptic involution satisfying
\(\tau^*\alpha=\alpha\).  The proof of
Proposition~\ref{prop:descent} shows that every element of
\(\Aff^+(Y,\alpha;\iota)\) commutes with \(\tau\).  We shall use the following elementary
information about the kernel of the actual derivative map.

\begin{lemma}\label{lem:kernel}
  Let
  \[
    K_Y:=\ker\left(
      D:\Aff^+(Y,\alpha;\iota)\longrightarrow\SL_2(\R)
    \right).
  \]
  Then:
  \begin{enumerate}[label=\textup{(\roman*)}]
    \item if \(q\in\cQ_2^{\mathrm{ns}}(2,2)\), then
    \(|K_Y|\in\{2,4,8\}\);
    \item if \(q\in\cQ_2(1,1,1,1)\), then
    \(|K_Y|\in\{2,4,6,12\}\).
  \end{enumerate}
\end{lemma}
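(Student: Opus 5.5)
Every element \(k\in K_Y\) has actual derivative \(I\), so it is a conformal automorphism of \(Y\) preserving \(\alpha\); in particular it is locally a translation and has no regular fixed points. Moreover \(K_Y\) is a finite group, being a subgroup of the finite group \(\Aut(Y)\) (here \(g(Y)\geq3\)). It contains \(\tau\), since \(\tau^*\alpha=\alpha\), \(\tau\) commutes with \(\iota\), and \(\tau\neq\id\). Hence \(|K_Y|\) is even, and it suffices to bound \(|K_Y|\) and pin down which even orders occur. The plan is to combine three constraints: (a) every nontrivial element of \(K_Y\) acts freely away from the zero set \(Z(\alpha)\); (b) \(K_Y\) commutes with \(\iota\) and \(\tau\), because elements of \(\Aff^+(Y,\alpha;\iota)\) commute with \(\tau\) by the proof of Proposition~\ref{prop:descent}; and (c) the Riemann--Hurwitz formula for \(Y\to Y/K_Y\).

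First I describe the zeros and stabilizers. In \(\cQ_2^{\mathrm{ns}}(2,2)\) the cover is unramified, and \(\alpha\) has four simple zeros lying over the two double zeros of \(q\); by Proposition~\ref{prop:quot-data}, \(g(Y)=3\). In \(\cQ_2(1,1,1,1)\) the four simple zeros of \(q\) are branch points, so \(\alpha\) has four double zeros (cone angle \(6\pi\)), and \(g(Y)=5\). At a zero of \(\alpha\) of cone angle \(2(k+1)\pi\), a translation-type automorphism fixing that zero acts by a rotation through a multiple of \(2\pi\) in the flat cone angle. Its local rotation order therefore divides \(k+1\), so a stabilizer in \(K_Y\) of a zero is cyclic of order dividing \(k+1\). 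This gives stabilizers of order \(1\) or \(2\) in case (i) and of order dividing \(3\) in case (ii). Thus \(K_Y\) acts on the finite set \(Z(\alpha)\) of size four with cyclic stabilizers of these orders, and it acts freely everywhere else.

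Next I apply Riemann--Hurwitz. Write \(N=|K_Y|\) and \(h=g(Y/K_Y)\). Then
\[
2g(Y)-2=N(2h-2)+\sum_{p\in Z(\alpha)}\bigl(|\operatorname{Stab}(p)|-1\bigr).
\]
The ramification sum is at most \(4\) in case (i), where each of the four points contributes at most \(1\), and at most \(8\) in case (ii), where each contributes at most \(2\). Each orbit of size \(N/s\) with stabilizer order \(s\) contributes \((N/s)(s-1)\).

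In case (i) we have \(4=N(2h-2)+R\) with \(R\le 4\). If \(h\geq2\) then \(N\le2\). If \(h=1\) then \(R=4\), so every zero has stabilizer of order \(2\), and the orbit structure forces \(N/2\) to divide into \(4\), giving \(N\in\{2,4,8\}\). The value \(h=0\) would require \(R\geq 4+2N>4\), which is impossible. I would then record the resulting list, noting that the number of orbits times \(N/2\) equals \(4\); together with the evenness of \(N\), this leaves \(N\in\{2,4,8\}\).

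In case (ii) we have \(8=N(2h-2)+R\), where \(R=\sum (N/s)(s-1)\) with \(s\in\{1,3\}\) and \(R\le 8\). Then \(h\geq2\) gives \(N\le4\), hence \(N\in\{2,4\}\). The case \(h=1\) gives \(R=8\), so all four zeros have \(s=3\); then \(3\mid N\) and the orbit sizes \(N/3\) sum to \(4\), so \(N\in\{6,12\}\) (the option \(N=3\) is excluded by evenness). The case \(h=0\) needs \(R=8+2N>8\), which is impossible.

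The main obstacle is justifying the local stabilizer claim, namely that a finite-order translation automorphism fixing a zero of cone angle \(2(k+1)\pi\) has order dividing \(k+1\), and that it is determined by this rotation. I would handle it with a local model: near the zero, \(\alpha=z^k\,dz\), and an automorphism preserving \(\alpha\) is \(z\mapsto\zeta z\) with \(\zeta^{k+1}=1\). Injectivity of the local action follows because an automorphism fixing a regular point with derivative \(I\) is the identity. A secondary check is that \(K_Y\) is genuinely finite and that orbit sizes are \(N/s\); both follow from finiteness of \(\Aut(Y)\) and the orbit--stabilizer theorem.
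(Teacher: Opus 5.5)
Your proof is correct, but it takes a different route from the paper's. The paper never computes a quotient genus. It notes that $K_Y$ acts freely on the finite set of positive horizontal separatrix germs at the zeros of $\alpha$: an element fixing such a germ is locally $u\mapsto\zeta u$ with $\zeta=1$, hence the identity. So $|K_Y|$ divides $\sum_P(m_P+1)$, which equals $8$ in case (i) and $12$ in case (ii), and combined with $\tau\in K_Y$ this gives both lists immediately.

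You use the same local fact, in the form that the stabilizer of a zero of order $k$ embeds in the $(k+1)$-st roots of unity. You then feed it, together with freeness off $Z(\alpha)$ and $g(Y)=3,5$, into Riemann--Hurwitz for $Y\to Y/K_Y$ and split on $h$. The case analysis checks out: $h=0$ is impossible, and $h\geq2$ gives $N\le2$ resp.\ $N\le4$. For $h=1$, every zero is forced to have maximal stabilizer, giving $N\in\{2,4,8\}$ resp.\ $N\in\{3,6,12\}$ before evenness is imposed.

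The paper's divisibility argument is shorter and needs neither $g(Y)$ nor Hurwitz finiteness of $\Aut(Y)$. It also applies verbatim to the translation automorphism group of any translation surface with a zero. Yours costs a case split but records extra structure. For example, $|K_Y|>2$ in case (i) forces $Y/K_Y$ to be a torus with every zero of $\alpha$ fixed by an involution of $K_Y$. Likewise, $|K_Y|\in\{6,12\}$ in case (ii) forces a genus-one quotient with each double zero fixed by an element of order three. Your observation (b), that $K_Y$ commutes with $\iota$ and $\tau$, is never used and can be dropped.
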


\begin{proof}
	Let \(\Sigma_\alpha\) be the zero set of \(\alpha\).
	For each \(P\in\Sigma_\alpha\), let \(m_P\) be the order of \(P\).
	Choose a local coordinate \(u\) centred at \(P\) such that
	\(\alpha=u^{m_P}\,\mathrm{d}u\).
	The associated local translation coordinate is
	\[
	z=\frac{u^{m_P+1}}{m_P+1}.
	\]
	A positive horizontal separatrix germ issuing from \(P\) is a germ
	of an arc starting at \(P\) whose image under \(z\) is the positive
	real ray.  There are exactly \(m_P+1\) such germs at \(P\).  Although
	the coordinate \(u\) is not unique, a different normalized choice
	merely permutes these \(m_P+1\) germs.
	
	Let
	\[
	\mathscr S^+(\alpha)
	:=
	\coprod_{P\in\Sigma_\alpha}
	\mathscr S_P^+(\alpha)
	\]
	denote the finite set of all positive horizontal separatrix germs
	issuing from the zeros of \(\alpha\).  Since every element of
	\(K_Y\) has actual derivative \(I\), it preserves \(\alpha\), and
	hence \(K_Y\) acts on \(\mathscr S^+(\alpha)\).
	
	Suppose that \(F\in K_Y\) fixes a separatrix germ
	\(\mathfrak s\in\mathscr S_P^+(\alpha)\), in the sense that
	\(F(P)=P\) and
	\(F(\mathfrak s)=\mathfrak s\)
	as germs of oriented arcs issuing from \(P\).  Choose a local
	coordinate \(u\) centred at \(P\) such that
	\(\alpha=u^{m_P}\,\mathrm{d}u\).
	Since \(F\in K_Y\), we have \(F^*\alpha=\alpha\).  Therefore
	\[
	F(u)^{m_P}F'(u)\,\mathrm{d}u
	=
	u^{m_P}\,\mathrm{d}u,
	\]
	and hence
	\[
	\mathrm{d}\left(\frac{F(u)^{m_P+1}}{m_P+1}\right)
	=
	\mathrm{d}\left(\frac{u^{m_P+1}}{m_P+1}\right).
	\]
	Since \(F(P)=P\), the integration constant is zero, and thus
	\[
	F(u)^{m_P+1}=u^{m_P+1}.
	\]
	It follows that
	\(F(u)=\zeta u\)
	near \(P\), for some \((m_P+1)\)-st root of unity \(\zeta\).
	
	In this coordinate, the \(m_P+1\) positive horizontal separatrix
	germs issuing from \(P\) are represented by the rays
	\[
	\arg u=\frac{2\pi k}{m_P+1},
	\qquad
	k=0,\ldots,m_P.
	\]
	Multiplication by \(\zeta\) permutes these germs.  Since \(F\) fixes
	the specified germ \(\mathfrak s\), we must have \(\zeta=1\).
	Consequently, \(F\) is the identity in a neighbourhood of \(P\), and
	hence
	\(F=\id_Y\).
	This proves that the action of \(K_Y\) on
	\(\mathscr S^+(\alpha)\) is free.
	
	For every
	\(\mathfrak s\in\mathscr S^+(\alpha)\), the orbit map
	\[
	K_Y\longrightarrow K_Y\cdot\mathfrak s,
	\qquad
	F\longmapsto F(\mathfrak s),
	\]
	is bijective, since the stabilizer of \(\mathfrak s\) is trivial.
	Thus every \(K_Y\)-orbit in \(\mathscr S^+(\alpha)\) has cardinality
	\(|K_Y|\).  Since \(\mathscr S^+(\alpha)\) is the disjoint union of
	its \(K_Y\)-orbits, it follows that
	\[
	|K_Y|\mid|\mathscr S^+(\alpha)|.
	\]
	
	Moreover, the involution \(\tau\) belongs to \(K_Y\), because
	\(\tau^*\alpha=\alpha\), and \(\tau\neq\id_Y\).  Hence
	\[
	2\mid |K_Y|.
	\]
	
	If \(q\in\cQ_2^{\mathrm{ns}}(2,2)\), then \(\alpha\) has four
	simple zeros.  Each of them has two positive horizontal
	separatrix germs, so
	\[
	|\mathscr S^+(\alpha)|=4\cdot2=8.
	\]
	Consequently,
	\[
	|K_Y|\mid8
	\qquad\text{and}\qquad
	2\mid|K_Y|,
	\]
	which gives
	\[
	|K_Y|\in\{2,4,8\}.
	\]
	
	If \(q\in\cQ_2(1,1,1,1)\), then \(\alpha\) has four double zeros.
	Each of them has three positive horizontal separatrix germs, so
	\[
	|\mathscr S^+(\alpha)|=4\cdot3=12.
	\]
	Consequently,
	\[
	|K_Y|\mid12
	\qquad\text{and}\qquad
	2\mid|K_Y|,
	\]
	which gives
	\[
	|K_Y|\in\{2,4,6,12\}.
	\]
\end{proof}

\subsection{The stratum \texorpdfstring{\(\cQ_2(2,1,1)\)}{Q(2,1,1)}}

\begin{proposition}\label{prop:q211-mod}
  There is no half-translation surface
  \((X,q)\in\cQ_2(2,1,1)\) such that
  \[
    \Gamma(X,q)=\PSL_2(\Z).
  \]
\end{proposition}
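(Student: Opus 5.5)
The plan is to push the modular symmetry down to the Vasilyev quotient and use the order-three obstruction there, as announced in the introduction. Suppose \((X,q)\in\cQ_2(2,1,1)\) has \(\Gamma(X,q)=\PSL_2(\Z)\). By Proposition~\ref{prop:quot-data}, the Vasilyev datum is \(\mathcal V(X,q)=(Z,\omega;B_V,j)\) with \((Z,\omega)\in\cH_2(2)\). Proposition~\ref{prop:descent} then gives
\[
  \PSL_2(\Z)=\Gamma(X,q)\subseteq\Gamma\bigl(\mathcal V(X,q)\bigr)\subseteq\Gamma(Z,\omega).
\]
So \(\Gamma(Z,\omega)\) contains the projective class of
\(\left(\begin{smallmatrix}0&-1\\1&1\end{smallmatrix}\right)\), which is an elliptic element of order three. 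This contradicts \cite[Proposition~4.5]{HubertLelievre2006}, which says that no translation surface in \(\cH_2(2)\) has a projective Veech group containing an elliptic element of order three.

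The key steps, in order, are the following.

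\begin{enumerate}
\item Apply Proposition~\ref{prop:quot-data} to identify the stratum of the quotient \((Z,\omega)\) as \(\cH_2(2)\).
\item Apply Proposition~\ref{prop:descent}. This is the only substantive input, because it guarantees that each element of \(\Gamma(X,q)\) is realized on \((Z,\omega)\) with the same projective derivative, not merely up to commensurability.
\item Check that the matrix above has order three in \(\PSL_2(\Z)\): its cube is \(-I\).
\item Invoke Hubert--Leli\`evre.
\end{enumerate}

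One point needs care. The Hubert--Leli\`evre statement concerns the projective Veech group of a translation surface in \(\cH_2(2)\). Here the quotient \(\Gamma(Z,\omega)\) is exactly such a group, with no marked points retained, so the obstruction applies directly. Passing from the marked group \(\Gamma(\mathcal V(X,q))\) to the larger group \(\Gamma(Z,\omega)\) only enlarges the group, so the order-three element survives.

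I expect the main obstacle to be already absorbed in earlier results: the correctness of the quotient datum, namely that the genus-four cover \(Y\) quotients by \(\tau\) to a genus-two surface with a single double zero. Given Proposition~\ref{prop:quot-data}, the remaining argument is short.

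A possible alternative is Proposition~\ref{prop:q211-arith}, which shows that \((Z,\omega)\) is arithmetic. That route is unnecessary here; the order-three element alone suffices.
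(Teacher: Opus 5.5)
Your proposal is correct and matches the paper's proof: Proposition~\ref{prop:quot-data} places the Vasilyev quotient in \(\cH_2(2)\), Proposition~\ref{prop:descent} gives \(\PSL_2(\Z)\subseteq\Gamma(Z,\omega)\), and the resulting order-three elliptic element contradicts the Hubert--Leli\`evre obstruction. Your explicit check that the matrix cubes to \(-I\) is the only detail you add beyond the paper's argument.
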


\begin{proof}
  Suppose that such a half-translation surface \((X,q)\) exists.
  By Proposition~\ref{prop:quot-data}, the Vasilyev
  quotient satisfies \((Z,\omega)\in\cH_2(2)\).  Moreover,
  Proposition~\ref{prop:descent} gives
  \[
    \PSL_2(\Z)=\Gamma(X,q)\subseteq\Gamma(Z,\omega).
  \]
  Thus \(\Gamma(Z,\omega)\) contains an elliptic element of projective
  order three.  This contradicts Hubert--Leli\`evre's obstruction
  \cite[Proposition~4.5]{HubertLelievre2006}.
\end{proof}

\subsection{The stratum
  \texorpdfstring{\(\cQ_2^{\mathrm{ns}}(2,2)\)}{Qns(2,2)}}

\begin{proposition}\label{prop:q22-mod}
  There is no half-translation surface
  \((X,q)\in\cQ_2^{\mathrm{ns}}(2,2)\) such that
  \[
    \Gamma(X,q)=\PSL_2(\Z).
  \]
\end{proposition}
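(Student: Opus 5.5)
We argue by contradiction on the canonical orientation cover \((Y,\alpha)\), which has genus three, with \(\alpha\) having four simple zeros. Suppose \(\Gamma(X,q)=\PSL_2(\Z)\). By the discussion preceding Lemma~\ref{lem:kernel}, \(D(\Aff^+(Y,\alpha;\iota))=\SL_2(\Z)\).

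\emph{Choice of elliptic element.} Choose \(F\in\Aff^+(Y,\alpha;\iota)\) with
\[
DF=R:=\begin{pmatrix}0&-1\\1&1\end{pmatrix}.
\]
The matrix \(R\) has order six and is conjugate in \(\SL_2(\R)\) to the rotation by \(\pi/3\). Then \(F^6\in K_Y\), and Lemma~\ref{lem:kernel}(i) gives \(|K_Y|\in\{2,4,8\}\). Hence \(F\) has finite order \(6m\) with \(m\mid 8\). After replacing \((Y,\alpha)\) by its image under a suitable element of \(\SL_2(\R)\), which changes neither the combinatorics nor the groups up to conjugation, \(F\) becomes a conformal automorphism of \(Y\) with
\[
F^*\alpha=e^{i\pi/3}\alpha.
\]
It still commutes with \(\iota\) and with \(\tau\), where \(\tau\in K_Y\). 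I would also keep available the element \(G\) with \(DG=\begin{pmatrix}0&-1\\1&0\end{pmatrix}\) as a backup obstruction.

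\emph{Local analysis.} At a zero \(P\) of \(\alpha\) fixed by \(F^k\), write \(\alpha=u\,\mathrm du\). Then \(F^k(u)=cu\) with \(c^2=e^{ik\pi/3}\). This determines the rotation number of \(F^k\) at \(P\) up to sign, and in particular forces \(c\) to have order \(12/\gcd(k,6)\)-type constraints; at a regular fixed point the rotation is \(e^{ik\pi/3}\) itself. The four zeros of \(\alpha\) are the preimages of the two double zeros of \(q\). Thus \(\iota\) pairs them, and by Proposition~\ref{prop:quot-data} \(\tau\) acts on them freely, since \(B_V\) lies over regular points. The group \(H=\langle F,\iota,\tau\rangle\), which is abelian modulo \(K_Y\), permutes this four-element set. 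I would first determine the possible permutation actions of \(F\), \(F^2\) and \(F^3\) on the zeros. Next I would determine the fixed points of \(F^2\) (projective order three) and of \(F^3\) (derivative \(-I\)).

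\emph{Global contradiction.} Apply Riemann--Hurwitz to the cyclic quotients \(Y\to Y/\langle F^2\rangle\) and \(Y\to Y/\langle F^3\rangle\), with \(g(Y)=3\). Count fixed points with the holomorphic Lefschetz formula, where \(F^k\) acts on \(H^0(Y,\Omega)\) with \(\alpha\) as an eigenvector of eigenvalue \(e^{ik\pi/3}\). Compare this with the descended action on \(X\) of genus two, where \(f=\pi_{\mathrm{or}}(F)\) induces an automorphism of \(\mathbb P^1=X/h_X\) of order dividing three. That automorphism permutes the six branch points and the image of the zeros of \(q\). Such an automorphism has exactly two fixed points on \(\mathbb P^1\). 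Since it must permute the six Weierstrass images in orbits of length three, it forces the two double zeros of \(q\) to map to fixed points of the order-three rotation of \(\mathbb P^1\). The local rule \(c^2=e^{i\pi/3}\) at the corresponding zeros of \(\alpha\) should then be incompatible with the Lefschetz count or with the Riemann--Hurwitz integrality.

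\emph{Main obstacle.} The hard step is this last compatibility check. One must combine the downstairs orbit structure, namely where the zeros of \(q\) and the Weierstrass points sit relative to the order-three rotation of \(\mathbb P^1\), with the upstairs rotation numbers, and handle the ambiguity coming from \(K_Y\) (\(m=1,2,4,8\)). If the order-three element does not close the argument, I would rerun the same bookkeeping with \(G\), where \(G^2\circ\iota\in K_Y\), in the spirit of the \(\cH_2(1,1)\) argument of Proposition~\ref{prop:sq-mod}.
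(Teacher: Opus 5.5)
Your proposal never reaches a contradiction. The decisive step is explicitly deferred (``should then be incompatible''), and several of the inputs you prepare for it are wrong.

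First, in \(\cQ_2^{\mathrm{ns}}(2,2)\) the involution \(\tau\) does not act freely on the zeros of \(\alpha\); it fixes all four. Indeed \(\alpha=\pi_V^*\omega\) with \(\omega\) nowhere zero on the torus \(Z\). So the zeros of \(\alpha\) are exactly the four ramification points of \(\pi_V\), that is, \(\operatorname{Fix}(\tau)\), and \(B_V\) is their image. The ``product of two disjoint transpositions'' in Proposition~\ref{prop:quot-data} refers to \(\cQ_2(1,1,1,1)\).

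Second, your downstairs conclusion is reversed. By Proposition~\ref{prop:normal-form}, the two double zeros of a nonsquare \(q\) in this stratum sit at Weierstrass points. Their images are therefore among the six branch values, which you yourself place in orbits of length three, so they cannot be fixed by the rotation.

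Third, the claim that \(f\) induces a M\"obius map of order dividing three is unjustified. The kernel of \(\overline D\) on \(\Aff^+(X,q)\) may be larger than \(\langle h_X\rangle\).

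The idea you are missing is a parity obstruction coming from the free involution \(\iota\), applied to an element of exact order three rather than order \(6m\). By Lemma~\ref{lem:kernel}(i), \(K_Y\) is a \(2\)-group. Hence \(D^{-1}(\langle A\rangle)\), for \(A\) of actual order three, has order \(3|K_Y|\), and Cauchy's theorem gives \(F\) of exact order three with \(DF=A\). Because all zeros of \(q\) have even order, \(\iota\) is fixed-point-free, and it commutes with \(F\). A \(3\)-cycle on the four zeros would leave a unique \(F\)-fixed zero, which \(\iota\) would then have to fix; so \(F\) fixes all four zeros. Riemann--Hurwitz, \(4=3(2g_0-2)+2r\) with \(r\geq4\), forces \(r=5\). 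But \(\iota\) acts freely on \(\operatorname{Fix}(F)\), so \(r\) is even, a contradiction.

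Alternatively, your downstairs route closes at once after the correction above:
\begin{enumerate}
\item Conjugate an order-three elliptic element into \(\mathrm{PSO}(2)\), so that the corresponding affine map \(f\) is conformal and commutes with \(h_X\).
\item Its image \(\bar f\in\Aut(\mathbb P^1)\) has order divisible by three. Indeed, if \(\bar f^n=\id\), then \(f^n\in\langle h_X\rangle\), which has trivial projective derivative. So some power \(g\) of \(\bar f\) has order exactly three.
\item The six branch values split into \(g\)-orbits of size one or three, with at most two fixed. Hence all their orbits have size three.
\item The images of the two double zeros would then form a \(g\)-invariant two-element set of branch values, which is impossible.
\end{enumerate}
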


\begin{proof}
  Suppose that such a half-translation surface \((X,q)\) exists.
  The lifting and descent relation above gives
  \[
    D\bigl(\Aff^+(Y,\alpha;\iota)\bigr)=\SL_2(\Z).
  \]
    Let
  \[
  A=
  \begin{pmatrix}
  	0&1\\
  	-1&-1
  \end{pmatrix}.
  \]
  Then \(A^3=I\) and \(A\neq I\), so
  \(\langle A\rangle\cong\Z/3\Z\).
  The restriction of the derivative homomorphism to
  \[
  \widetilde C
  :=
  D^{-1}(\langle A\rangle)
  \subseteq
  \Aff^+(Y,\alpha;\iota)
  \]
  gives a short exact sequence
  \[
  1
  \longrightarrow K_Y
  \longrightarrow \widetilde C
  \xrightarrow{\,D\,}
  \langle A\rangle
  \longrightarrow 1,
  \]
  where \[
  K_Y
  =
  \ker\left(
  D:\Aff^+(Y,\alpha;\iota)
  \longrightarrow\SL_2(\R)
  \right).
  \]
  
  By Lemma~\ref{lem:kernel}\textup{(i)}, \(K_Y\) is a finite
  \(2\)-group.  It follows from the exact sequence that
  \(\widetilde C\) is finite and
  \[
  |\widetilde C|
  =
  |K_Y|\,|\langle A\rangle|
  =
  3|K_Y|.
  \]
  Hence Cauchy's theorem gives an element
  \(F\in\widetilde C\) of order three.  Since \(K_Y\) is a
  \(2\)-group, it contains no element of order three.  Therefore
  \(F\notin K_Y\), and hence
  \(DF\neq I\).
  Since \(DF\in\langle A\rangle\), we have
  \(DF=A\) or \(DF=A^2\).  Replacing \(F\) by \(F^2\) in the
  latter case, we may assume that
  \(DF=A\).
  
 Let
 \[
 \Sigma_\alpha=\{P_1,P_2,P_3,P_4\}
 \]
 be the zero set of \(\alpha\).  Both \(F\) and \(\iota\) preserve
 \(\Sigma_\alpha\): the map \(F\) is an affine automorphism of
 \((Y,\alpha)\), while \(\iota^*\alpha=-\alpha\).  Since \(F\) has
 order three, its induced permutation of the four-element set
 \(\Sigma_\alpha\) has order dividing three.  Hence this permutation
 is either the identity or a three-cycle together with one fixed
 point.
 
 Suppose that the induced permutation is nontrivial.  Then there is
 a unique zero \(P\in\Sigma_\alpha\) fixed by \(F\).  Since
 \(F\circ\iota=\iota\circ F\), we have
 \[
 F(\iota(P))
 =
 \iota(F(P))
 =
 \iota(P).
 \]
 Thus \(\iota(P)\) is also a zero fixed by \(F\).  By the uniqueness
 of \(P\), it follows that
 \[
 \iota(P)=P.
 \]
 This contradicts the fact that \(\iota\) is fixed-point-free.
 Therefore the induced permutation of \(\Sigma_\alpha\) is trivial,
 and \(F\) fixes all four zeros of \(\alpha\).

  Let \(r\) be the number of fixed points of \(F\), and let \(g_0\) be the genus of
  \(Y/\langle F\rangle\).  Riemann--Hurwitz gives
  \[
    4
    =2g(Y)-2
    =3(2g_0-2)+2r.
  \]
  Since \(r\geq4\), this forces
  \[
    g_0=0
    \ \text{and}\
    r=5.
  \]
  On the other hand, \(F\) commutes with \(\iota\), and \(\iota\)
  acts without fixed points on \(Y\).  It therefore acts freely on
  the fixed point set of \(F\), so \(r\) must be even.  This contradiction proves the
  proposition.
\end{proof}

\subsection{The stratum
  \texorpdfstring{\(\cQ_2(1,1,1,1)\)}{Q(1,1,1,1)}}

\begin{proposition}\label{prop:q1111-mod}
  There is no half-translation surface
  \((X,q)\in\cQ_2(1,1,1,1)\) such that
  \[
    \Gamma(X,q)=\PSL_2(\Z).
  \]
\end{proposition}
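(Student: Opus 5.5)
The plan is to imitate the order-three argument of Proposition~\ref{prop:q22-mod} on the canonical orientation cover \((Y,\alpha)\). Here \(g(Y)=5\), \(\alpha\) has four double zeros \(\Sigma_\alpha=\{P_1,\dots,P_4\}\), and \(\iota\) fixes each of them. Unlike the \((2,2)\) case, the kernel \(K_Y\) may now contain elements of order three, since \(|K_Y|\in\{2,4,6,12\}\) by Lemma~\ref{lem:kernel}\textup{(ii)}. The contradiction will therefore come from the local structure at the zeros and the involution \(\tau\), not from parity.

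\emph{Step 1: an element of \(3\)-power order.} As before,
\[
D\bigl(\Aff^+(Y,\alpha;\iota)\bigr)=\SL_2(\Z).
\]
Put \(A=\begin{pmatrix}0&1\\-1&-1\end{pmatrix}\) and \(\widetilde C=D^{-1}(\langle A\rangle)\), so that \(|\widetilde C|=3|K_Y|\). The \(3\)-part of \(|\widetilde C|\) is strictly larger than that of \(|K_Y|\). Hence a Sylow \(3\)-subgroup of \(\widetilde C\) is not contained in \(K_Y\), and it surjects onto \(\langle A\rangle\). This yields \(F\in\widetilde C\) of order \(N=3^k\) with \(DF=A\).

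Next I will normalize. Replacing \((X,q)\) by its image under a suitable element of \(\SL_2(\R)\) conjugates the group but keeps all the finite-group data. In particular, I may assume that \(DF\) is the rotation by \(2\pi/3\). Then \(F\) is a conformal automorphism of \(Y\), and
\[
F^*\alpha=\lambda\alpha,\qquad \lambda=e^{\pm 2\pi i/3}.
\]

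\emph{Step 2: \(F\) fixes every zero.} By the proof of Proposition~\ref{prop:descent}, \(F\) commutes with \(\tau\). By Proposition~\ref{prop:quot-data}, \(\tau\) acts on \(\Sigma_\alpha\) as a product of two disjoint transpositions. The centralizer of such a permutation in \(S_4\) is dihedral of order \(8\), so it contains no element of order three. The permutation induced by \(F\) has \(3\)-power order and lies in this centralizer, so it is trivial. Thus \(F\) fixes all four double zeros.

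\emph{Step 3: local multipliers and Riemann--Hurwitz.} At a fixed zero \(P\), choose a coordinate \(u\) with \(\alpha=u^2\,\mathrm{d}u\). Since \(F\) is conformal of finite order fixing \(P\), after linearizing we may write \(F(u)=cu\). The identity \(F^*\alpha=\lambda\alpha\) gives \(c^3=\lambda\), so \(c\) is a primitive ninth root of unity. Hence the stabilizer of each \(P_i\) in \(\langle F\rangle\) is the whole group, and \(N\ge 9\). Riemann--Hurwitz for \(Y\to Y/\langle F\rangle\) then gives
\[
8=2g(Y)-2\;\ge\; N(2g_0-2)+4(N-1)\;\ge\;-2N+4N-4=2N-4 .
\]
This forces \(N\le 6\), contradicting \(N\ge 9\). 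Therefore \(\Gamma(X,q)\neq\PSL_2(\Z)\).

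The step needing the most care is Step 3. I must justify linearizing \(F\) near \(P\) as \(u\mapsto cu\) in a coordinate that also satisfies \(\alpha=u^2\,\mathrm{d}u\), and check \(c^3=\lambda\) after the \(\SL_2(\R)\)-normalization. This follows from the same integration argument as in Lemma~\ref{lem:kernel}, now with \(F^*\alpha=\lambda\alpha\) in place of \(F^*\alpha=\alpha\): it gives \(F(u)^3=\lambda u^3\). I also need that the data \(\tau\), \(\iota\), and \(\Sigma_\alpha\) from Proposition~\ref{prop:quot-data} survive the normalization, which they do because the normalization is a \(\GL_2^+(\R)\)-action and preserves the affine structure.
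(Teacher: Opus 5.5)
Your proposal is correct and is essentially the paper's proof: the same Sylow $3$-subgroup of $D^{-1}(\langle A\rangle)$, the same centralizer argument with the double transposition induced by $\tau$ forcing $F$ to fix all four zeros, and the same combination of the local multiplier at a fixed double zero with Riemann--Hurwitz. The only difference is organizational. The paper treats the cases where $F$ has order $3$ and order $9$ separately. You instead observe that the multiplier $c$ with $c^3=\lambda$ is a primitive ninth root of unity, so $9\mid N$, and then a single Riemann--Hurwitz bound $N\le 6$ handles both cases at once.
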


\begin{proof}
	Suppose that such a half-translation surface \((X,q)\) exists.
	The lifting and descent relation above gives
	\[
	D\bigl(\Aff^+(Y,\alpha;\iota)\bigr)=\SL_2(\Z).
	\]
	Let
	\[
	A=
	\begin{pmatrix}
		0&1\\
		-1&-1
	\end{pmatrix}.
	\]
	Then \(A^3=I\) and \(A\neq I\), so
	\(
	\langle A\rangle\cong\Z/3\Z
	\).
	Set
	\[
	\widetilde C
	:=
	D^{-1}(\langle A\rangle)
	\subseteq
	\Aff^+(Y,\alpha;\iota).
	\]
	Recall that
	\[
	K_Y
	=
	\ker\left(
	D:\Aff^+(Y,\alpha;\iota)
	\longrightarrow\SL_2(\R)
	\right).
	\]
	Restriction of the derivative homomorphism gives a short exact
	sequence
	\[
	1
	\longrightarrow K_Y
	\longrightarrow \widetilde C
	\xrightarrow{\,D\,}
	\langle A\rangle
	\longrightarrow 1.
	\]
	By Lemma~\ref{lem:kernel}\textup{(ii)},
	\[
	|K_Y|\in\{2,4,6,12\}.
	\]
	Moreover,
	\[
	\widetilde C/K_Y
	\cong
	\langle A\rangle
	\cong
	\Z/3\Z,
	\]
	and hence
	\[
	|\widetilde C|=3|K_Y|
	\in\{6,12,18,36\}.
	\]
	Let \(P\) be a Sylow \(3\)-subgroup of \(\widetilde C\).
	It follows that
	\[
	|P|\in\{3,9\}.
	\]
	
Let
\[
\Sigma_\alpha=\{P_1,P_2,P_3,P_4\}
\]
be the set of the four double zeros of \(\alpha\), and let
	\[
	\rho:P\longrightarrow S_4
	\]
	be the permutation representation induced by the action of \(P\)
	on \(\Sigma_\alpha\).  Every element of \(P\) commutes with
	\(\tau\).  Moreover, Proposition~\ref{prop:quot-data} shows that the
	permutation of \(\Sigma_\alpha\) induced by \(\tau\), which we
	denote by \(t\), is a product of two disjoint transpositions.
	Consequently,
	\[
	\rho(P)\subseteq C_{S_4}(t).
	\]
	The centralizer \(C_{S_4}(t)\) has order eight, so
	\(|\rho(P)|\) divides eight.  On the other hand, \(\rho(P)\) is a
	\(3\)-group.  Therefore \(|\rho(P)|=1\), and every element of \(P\)
	fixes each of the four zeros of \(\alpha\).
	
	We next show that \(D(P)\) is nontrivial.  If
	\(|K_Y|\in\{2,4\}\), then \(|P|=3\), whereas if
	\(|K_Y|\in\{6,12\}\), then \(|P|=9\).  In either case,
	\[
	|P|\nmid |K_Y|.
	\]
	Therefore \(P\not\subseteq K_Y\) by Lagrange's theorem, and hence
	\(D(P)\) is nontrivial.  Since
	\(D(P)\subseteq\langle A\rangle\)
	and \(\langle A\rangle\) has prime order three, it follows that
	\[
	D(P)=\langle A\rangle.
	\]
	Choose \(F\in P\) such that
	\(DF=A\).
	Since \(F\neq\id_Y\) and \(F\in P\), the order of \(F\) is either
	three or nine.
	
	Suppose first that \(F\) has order three.  After replacing
	\((Y,\alpha)\) by an \(\SL_2(\R)\)-equivalent translation surface,
	we may assume that \(F\) is conformal.  Since its derivative is a
	nontrivial element of order three, there is a primitive cube root of
	unity \(\zeta\) such that
	\(
	F^*\alpha=\zeta\alpha
	\).
	
	Let \(P_0\) be one of the double zeros of \(\alpha\).  Since \(F\)
	fixes \(P_0\), choose a local coordinate \(u\) centred at \(P_0\)
	such that
	\(\alpha=u^2\,\mathrm{d}u
	\).
	Write
	\[
	F(u)=\lambda u+O(u^2),
	\qquad \lambda\neq0.
	\]
	Comparing the lowest-order terms in \(F^*\alpha=\zeta\alpha\)
	gives
	\(\lambda^3=\zeta\). On the other hand, \(F^3=\id_Y\) implies
	\(
	\lambda^3=1
	\),
	contradicting \(\zeta\neq1\).
	
	Suppose now that \(F\) has order nine.  Let \(r\) be the number
	of fixed points of \(F\), and let \(g_0\) be the genus of
	\(Y/\langle F\rangle\).  Since \(F\) fixes each of the four zeros
	of \(\alpha\), we have \(r\geq4\).

	Every fixed point of \(F\) is totally ramified in the quotient map
	\[
	Y\longrightarrow Y/\langle F\rangle
	\]
	and hence contributes \(9-1=8\) to the ramification term.  The
	Riemann--Hurwitz formula therefore gives
	\[
	8
	=
	2g(Y)-2
	\geq
	9(2g_0-2)+8r
	\geq
	9(2g_0-2)+32.
	\]
	Since \(g_0\geq0\), the last expression is at least
	\[
	-18+32=14,
	\]
	a contradiction.

	This contradiction proves the proposition.
\end{proof}

\begin{proof}[Proof of Theorem~\ref{thm:modular}]
  Suppose that \(\Gamma(X,q)\) is conjugate to \(\PSL_2(\Z)\).
  After applying an element of \(\SL_2(\R)\), we may assume that
  \[
    \Gamma(X,q)=\PSL_2(\Z).
  \]
  If \(q\) is a square, this is excluded by
  Proposition~\ref{prop:sq-mod}.  If \(q\) is nonsquare, then the classification recalled in
  Subsection~\ref{sub:nonsquare-strata} places \((X,q)\) in one of
  \[
    \cQ_2^{\mathrm{ns}}(2,2),
    \qquad
    \cQ_2(2,1,1),
    \qquad
    \cQ_2(1,1,1,1).
  \]
  These three cases are excluded by
  Propositions~\ref{prop:q22-mod},~\ref{prop:q211-mod},
  and~\ref{prop:q1111-mod}, respectively.
\end{proof}

\begin{corollary}\label{cor:bound}
  Let \((X,q)\) be a half-translation surface of genus two.  If its
  projective Veech group is a lattice, then
  \[
    \Area\bigl(\Gamma(X,q)\backslash\Hh\bigr)
    \geq \frac{3\pi}{5}.
  \]
\end{corollary}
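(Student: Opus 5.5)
The proof of Corollary~\ref{cor:bound} is a short combination of the results already established, and I would argue by contradiction. Suppose \(\Gamma(X,q)\) is a lattice with \(\Area(\Gamma(X,q)\backslash\Hh)<3\pi/5\). As recalled in the introduction, in genus at least two a Veech lattice is nonuniform, so the quotient orbifold \(\Gamma(X,q)\backslash\Hh\) is orientable, noncompact, and of finite area.

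Lemma~\ref{lem:small-orb} then restricts its signature to \((0;2,3;1)\) or \((0;2,4;1)\). By the uniformization discussion following that lemma, \(\Gamma(X,q)\) is conjugate in \(\PSL_2(\R)\) to \(\DeltaTri(2,3,\infty)\cong\PSL_2(\Z)\) or to \(\DeltaTri(2,4,\infty)\).

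The second case needs one extra step. I would replace \((X,q)\) by its image under the conjugating element of \(\SL_2(\R)\), which conjugates the projective Veech group accordingly. This gives a half-translation surface whose projective Veech group equals \(\DeltaTri(2,4,\infty)\), which Proposition~\ref{prop:hecke4} forbids. The first case is excluded directly by Theorem~\ref{thm:modular}, proved just above. Together these give the bound; this is exactly the equivalence recorded in Corollary~\ref{cor:reduction}, so one can also cite that corollary together with Theorem~\ref{thm:modular}.

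I do not expect a real obstacle, since all the substantive work is contained in Theorem~\ref{thm:modular} and Proposition~\ref{prop:hecke4}. The only points to state explicitly are that the quotient is noncompact, so that Lemma~\ref{lem:small-orb} applies, and that conjugacy in \(\PSL_2(\R)\) is realized by the \(\SL_2(\R)\)-action on surfaces, so that Proposition~\ref{prop:hecke4}, stated for equality of groups, covers the conjugate case.
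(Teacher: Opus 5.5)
Your proposal is correct and follows the paper's route: the paper proves this corollary by citing Corollary~\ref{cor:reduction}, which applies Lemma~\ref{lem:small-orb} and Proposition~\ref{prop:hecke4}, together with Theorem~\ref{thm:modular}. Your explicit remarks on noncompactness of the quotient and on realizing $\PSL_2(\R)$-conjugation by the $\SL_2(\R)$-action on surfaces fill in details the paper leaves implicit.
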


\begin{proof}
  This follows from Corollary~\ref{cor:reduction} and
  Theorem~\ref{thm:modular}.
\end{proof}

\section{The equality case}
\label{sec:equality}

We now classify the Teichm\"uller curves attaining the minimum.
Suppose that
\[
\Area\bigl(\Gamma(X,q)\backslash\Hh\bigr)
=\frac{3\pi}{5}.
\]
By Lemma~\ref{lem:small-orb}, the projective Veech group
\(\Gamma(X,q)\) is conjugate in \(\PSL_2(\R)\) to
\(\DeltaTri(2,5,\infty)\).  After applying a suitable element of
\(\SL_2(\R)\), we may and shall assume that
\[
\Gamma(X,q)=\DeltaTri(2,5,\infty).
\]
This replacement does not change the Teichm\"uller curve, the
square or nonsquare character of \(q\), or the
\(\GL_2^+(\R)\)-orbit appearing in the equality statement.

\subsection{The square locus}

Let \(X_5\) be the closed Riemann surface obtained by normalizing and
completing
\[
y^2=x^5-1,
\]
and let
\[
\omega_{\mathrm{pent}}:=\frac{\mathrm{d}x}{y}.
\]
Set
\[
\PentagonOrbit
:=\GL_2^+(\R)\cdot(X_5,\omega_{\mathrm{pent}}).
\]
Thus \(\PentagonOrbit\) is the \(\GL_2^+(\R)\)-orbit of the
double-pentagon translation surface in \(\cH_2(2)\).

\begin{proposition}\label{prop:sq-eq}
	Let \(X\) be a closed Riemann surface of genus two and let
	\(\omega\) be a nonzero holomorphic one-form on \(X\).  Set
	\(q=\omega^2\).
	If
	\[
	\Gamma(X,q)=\DeltaTri(2,5,\infty),
	\]
	then
	\[
	(X,\omega)\in\PentagonOrbit.
	\]
\end{proposition}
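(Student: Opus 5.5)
Proof plan. The argument follows the outline in the introduction: an order-five elliptic element determines the underlying Riemann surface and the relevant eigenform. First recover the actual Veech group exactly as in the proof of Proposition~\ref{prop:sq-mod}. Every element of \(\Aff^+(X,q)\) is affine for \((X,\omega)\), and \(h_X\) has derivative \(-I\). Hence
\[
\SL(X,\omega)=\operatorname{pr}^{-1}\bigl(\DeltaTri(2,5,\infty)\bigr).
\]
This group contains a matrix \(E\) whose projective class is elliptic of order five. Replacing \(E\) by \(-E\) if necessary, we may take \(E\) of order five in \(\SL_2(\R)\), with eigenvalues \(\zeta^{\pm1}\) for a primitive fifth root of unity \(\zeta\). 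Conjugating by an element of \(\SL_2(\R)\) (which keeps us in \(\PentagonOrbit\)), we may assume \(E\) is a rotation. Choose \(F\in\Aff^+(X,\omega)\) with \(DF=E\). Then \(F\) is conformal, and \(F^*\omega=\zeta\omega\) for a suitable choice of \(\zeta\).

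Next, show that \(F\) itself has finite order divisible by five. The map \(F^5\) has derivative \(I\), so it is a translation automorphism of \((X,\omega)\). The kernel of \(D\) on \(\Aff^+(X,\omega)\) is a finite group of conformal automorphisms. Therefore \(F\) generates a finite cyclic group of conformal automorphisms of \(X\), and it contains an element \(G\) of order exactly five with \(G^*\omega=\zeta'\omega\), \(\zeta'\ne1\).

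The key step is to identify \(X\). A genus-two surface with an automorphism of order five is \(X_5\): the automorphism commutes with \(h_X\) and descends to an order-five Möbius map of \(\mathbb P^1\). That map has two fixed points and permutes the six branch points. Hence it fixes one branch point and cycles the other five, and after normalization the branch set is \(\{\infty\}\cup\mu_5\). So \(X\cong X_5\), and \(G\) acts by \((x,y)\mapsto(\zeta^a x,\pm y)\). On the basis \(\mathrm dx/y\), \(x\,\mathrm dx/y\), the eigenvalues of \(G^*\) are \(\zeta^a\) and \(\zeta^{2a}\), which are distinct. So \(\omega\) is an eigenvector, hence a scalar multiple of either \(\mathrm dx/y\) or \(x\,\mathrm dx/y\). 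A complex scalar is an element of \(\GL_2^+(\R)\) (rotation and scaling), so \((X,\omega)\) lies in the orbit of either the double pentagon or the regular decagon surface.

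Finally, exclude the decagon. By \cite{EarleGardiner1997}, \((X_5,(x\,\mathrm dx/y)^2)\) has projective Veech group conjugate to \(\DeltaTri(5,\infty,\infty)\). Its covolume is \(8\pi/5\), not \(3\pi/5\). The projective Veech group is a conjugacy invariant along the \(\GL_2^+(\R)\)-orbit, and the covolume is unchanged by conjugation, so this case is excluded. Alternatively, \(x\,\mathrm dx/y\) has two simple zeros, whereas a double pentagon form lies in \(\cH_2(2)\); this stratum distinction is available if needed.

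The main obstacle is making the step "order-five automorphism implies \(X\cong X_5\)" rigorous. In particular, one must check that an order-five Möbius map with six invariant points fixes exactly one of them. This holds because there are only two fixed points and \(6=1+5\): the other fixed point of the Möbius map is not a branch point. One must also confirm the eigenvalue computation, and the claim that \(E\) can be chosen of order exactly five rather than ten. The latter is handled by the sign choice \(\pm E\).
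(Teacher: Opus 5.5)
Your proposal is correct and follows the paper's proof essentially step for step: the full preimage of the projective group, a rotation lift of order five, a conformal order-five automorphism forcing \(X\cong X_5\) through its action on the six branch points, the eigenform dichotomy \(\mathrm{d}x/y\) versus \(x\,\mathrm{d}x/y\), and exclusion of the decagon via Earle--Gardiner, with only cosmetic differences such as bounding the order of \(F\) through the finite kernel of \(D\) and comparing covolumes rather than signatures. One small caveat is that your fallback ``stratum distinction'' is not by itself an exclusion, since it only restates that \((X_5,x\,\mathrm{d}x/y)\) lies outside \(\PentagonOrbit\); it would become one if combined with the \(\cH_2(1,1)\) argument of Proposition~\ref{prop:sq-mod}, which uses only an elliptic element of order two and so applies to \(\DeltaTri(2,5,\infty)\) as well.
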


\begin{proof}
	As in the proof of Proposition~\ref{prop:sq-mod}, the actual
	Veech group of the translation surface \((X,\omega)\) is the full
	inverse image of its projective Veech group:
	\[
	\SL(X,\omega)
	=
	\operatorname{pr}^{-1}
	\bigl(\Gamma(X,\omega^2)\bigr),
	\]
	where
	\[
	\operatorname{pr}:\SL_2(\R)\longrightarrow\PSL_2(\R)
	\]
	is the natural projection.  Indeed, every affine automorphism of
	the half-translation surface \((X,\omega^2)\) is affine for
	\((X,\omega)\), and the hyperelliptic involution has derivative
	\(-I\).
	
	Let \(\bar R\in\DeltaTri(2,5,\infty)\) be an elliptic element of
	order five.  Choose its lift
	\[
	R\in\SL_2(\R)
	\]
	having actual order five.  After replacing \((X,\omega)\) by an
	\(\SL_2(\R)\)-equivalent translation surface, we may assume that
	\(R\in\SO(2)\).  This replacement does not affect the desired
	conclusion, since \(\PentagonOrbit\) is a
	\(\GL_2^+(\R)\)-orbit.
	
	Choose
	\[
	f\in\Aff^+(X,\omega)
	\qquad\text{with}\qquad
	Df=R.
	\]
	Since \(R\in\SO(2)\), the affine automorphism \(f\) is conformal,
	and
	\(f^*\omega=\lambda\omega\)
	for some \(\lambda\in\C^*\).  The automorphism group \(\Aut(X)\) is finite, so
	\(f\) has finite order, say \(N\).  Since the derivative map is a
	homomorphism,
	\[
	(Df)^N=D(f^N)=I.
	\]
	Thus the order of \(Df\) divides \(N\), and hence \(5\mid N\).
	Consequently, the cyclic group \(\langle f\rangle\) contains an automorphism
	\(\sigma\in\Aut(X)\)
	of order five.  As \(\sigma\) is a power of \(f\), the one-form \(\omega\)
	is an eigenform of \(\sigma^*\); that is,
	\(\sigma^*\omega=\mu\omega\) for some \(\mu\in\C^*\).
	
	We next determine the Riemann surface \(X\).  Since \(X\) has genus
	two, its hyperelliptic involution \(h_X\) is central in
	\(\Aut(X)\).  Hence \(\sigma\) descends through the hyperelliptic
	quotient to an automorphism
	\[
	\bar\sigma
	\in
	\Aut\bigl(X/\langle h_X\rangle\bigr)
	\cong\Aut(\mathbb P^1).
	\]
	The kernel of the descent homomorphism is
	\(\langle h_X\rangle\), which has order two.  Hence
	\[
	\langle\sigma\rangle\cap\langle h_X\rangle=\{\id_X\},
	\]
	since \(\sigma\) has order five.  The descent homomorphism is
	therefore injective on \(\langle\sigma\rangle\), and
	\(\bar\sigma\) has order five.
	
	The six branch points of the hyperelliptic map
	\[
	\pi:X\longrightarrow\mathbb P^1
	\]
	are the images of the fixed-point set \(\operatorname{Fix}(h_X)\) of
	\(h_X\).  Since \(\sigma\) commutes with \(h_X\), it preserves this set.
	The identity
	\[
	\pi\circ\sigma=\bar\sigma\circ\pi
	\]
	therefore shows that the six branch points form a
	\(\bar\sigma\)-invariant set.  A nontrivial M\"obius
	transformation of order five has two fixed points, and every other
	orbit has length five.  It follows that the branch set consists of
	one fixed point of \(\bar\sigma\) and one orbit of length five.

	After a M\"obius change of coordinate, we may assume that the two
	fixed points of \(\bar\sigma\) are \(0\) and \(\infty\).  Hence
	\(\bar\sigma(x)=\zeta_5x\)
	for some primitive fifth root of unity \(\zeta_5\).  Interchanging
	\(0\) and \(\infty\), if necessary, we may assume that the fixed
	branch point is \(\infty\).  The remaining five branch points then
	form an orbit
	\[
	\{c,\zeta_5c,\zeta_5^2c,\zeta_5^3c,\zeta_5^4c\}
	\]
	for some \(c\in\C^*\).  After rescaling the coordinate \(x\), we may
	further assume that \(c=1\).  Thus the remaining branch points are
	the fifth roots of unity.  Therefore
	\[
	X\cong X_5,
	\qquad
	X_5:\ y^2=x^5-1.
	\]
	Under this identification, the order-five lift is
	\[
	\sigma(x,y)=(\zeta_5x,y).
	\]
	
	Set
	\[
	\eta_1:=\frac{\mathrm{d}x}{y},
	\qquad
	\eta_2:=\frac{x\,\mathrm{d}x}{y}.
	\]
	These forms constitute a basis of the vector space
	of holomorphic one-forms on \(X_5\), and
	\[
	\sigma^*\eta_1=\zeta_5\eta_1,
	\qquad
	\sigma^*\eta_2=\zeta_5^2\eta_2.
	\]
	Thus \(\eta_1\) and \(\eta_2\) are eigenforms of \(\sigma^*\) with
	distinct eigenvalues.  Since \(\omega\) is also an eigenform, it is a
	scalar multiple of either \(\eta_1\) or \(\eta_2\).
	
	The two eigenforms have divisors
	\[
	\operatorname{div}(\eta_1)=2P_\infty,
	\qquad
	\operatorname{div}(\eta_2)=P_0^++P_0^-,
	\]
	where \(P_0^\pm\) are the two points above \(x=0\).
	Earle and Gardiner identify the corresponding Teichm\"uller
	curves explicitly.  The eigenform \(\eta_1\) gives the
	double-pentagon translation surface, whose projective Veech group
	is \(\DeltaTri(2,5,\infty)\), whereas \(\eta_2\) gives the
	regular-decagon translation surface, whose projective Veech group
	is \(\DeltaTri(5,\infty,\infty)\); see
	\cite[Sections~10--11]{EarleGardiner1997}.  These groups have
	different orbifold signatures, and hence the possibility that
	\(\omega\) is a scalar multiple of \(\eta_2\) is excluded.
	
	Therefore
	\[
	\omega=c\,\frac{\mathrm{d}x}{y}
	\]
	for some \(c\in\C^*\).  Multiplication by \(c\) is the action of
	an element of \(\GL_2^+(\R)\).  Taking into account the initial
	\(\SL_2(\R)\)-change of the translation surface, we conclude that
	\[
	(X,\omega)
	\in
	\GL_2^+(\R)\cdot(X_5,\omega_{\mathrm{pent}})
	=
	\PentagonOrbit.
	\]
\end{proof}

\begin{remark}
  Alternatively, one may combine the description of primitive
  Teichm\"uller curves in genus two with McMullen's theorem that the
  regular pentagon and regular decagon generate the only genus-two
  Teichm\"uller curves of discriminant five
  \cite[Theorem~1.4]{McMullen2005}.  
\end{remark}

\subsection{Two arithmetic nonsquare strata}

\begin{proposition}\label{prop:arith-eq}
  There is no half-translation surface
  \[
    (X,q)\in
    \cQ_2^{\mathrm{ns}}(2,2)\cup\cQ_2(2,1,1)
  \]
  such that
  \(\Gamma(X,q)=\DeltaTri(2,5,\infty)\).
\end{proposition}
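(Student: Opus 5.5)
The plan is to use the arithmeticity results already proved: Propositions~\ref{prop:q22-arith} and~\ref{prop:q211-arith}. Suppose \((X,q)\) lies in \(\cQ_2^{\mathrm{ns}}(2,2)\cup\cQ_2(2,1,1)\) and \(\Gamma(X,q)=\DeltaTri(2,5,\infty)\). Then \(\Gamma(X,q)\) is a lattice, so \((X,q)\) is a Veech surface. By the relevant proposition it is arithmetic, which means \(\DeltaTri(2,5,\infty)\) is commensurable, up to conjugacy, with \(\PSL_2(\Z)\). So the whole proof reduces to showing that \(\DeltaTri(2,5,\infty)\) is not commensurable with the modular group.

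For this last step I would use the trace field. Commensurability up to conjugacy preserves the invariant trace field \(\Q(\operatorname{tr}\gamma^2:\gamma\in\Gamma)\). For \(\PSL_2(\Z)\) this field is \(\Q\).

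On the other hand, \(\DeltaTri(2,5,\infty)\) contains an elliptic element \(\bar R\) of order five. A lift \(R\in\SL_2(\R)\) has trace \(\pm2\cos(\pi k/5)\) for some \(k\) prime to \(5\). Then
\[
  \operatorname{tr}(R^2)=\operatorname{tr}(R)^2-2=2\cos(2\pi k/5)\in\Bigl\{\tfrac{-1\pm\sqrt5}{2}\Bigr\},
\]
which is irrational. Hence the invariant trace field of \(\DeltaTri(2,5,\infty)\) contains \(\Q(\sqrt5)\), and the two groups are not commensurable.

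An alternative argument avoids invariant trace fields. A finite-index subgroup \(\Gamma'\) of \(\DeltaTri(2,5,\infty)\) still contains some power \(R^m\) with \(5\nmid m\), because the orbits of \(\langle\bar R\rangle\) on cosets have length \(1\) or \(5\). If \(\Gamma'\) were conjugate to a finite-index subgroup of \(\PSL_2(\Z)\), then \(\Gamma'\) would contain an elliptic element of order five. But finite-order elements of \(\PSL_2(\Z)\) have order \(1\), \(2\) or \(3\), since their traces lie in \(\{0,\pm1\}\). This is a contradiction.

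The second argument is cleaner and self-contained, so I would present it and mention the trace-field viewpoint only in passing. The step that needs the most care is the claim that a finite-index subgroup keeps an order-five element. The argument is this: \(\langle\bar R\rangle\) acts on the finite coset set of \(\Gamma'\), and its orbits have length \(1\) or \(5\). If every orbit has length \(5\), I would instead pass to the finite-index normal core of \(\Gamma'\). Then \(\bar R^{\,n}\) lies in the core for \(n\) equal to the index, and since \(\bar R\) has order exactly five, \(\bar R^{\,n}\) is nontrivial whenever \(5\nmid n\). Even so, this needs care: a cleaner route is to note that \(R\) fixes a point of \(\Hh\) that projects to a cone point of order five on \(\DeltaTri(2,5,\infty)\backslash\Hh\). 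That cone point lifts to cone points of orders dividing five on the finite cover, and if all of them were regular the cover would be locally unramified there, so the orders could all equal \(1\).

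So the order-five argument can fail: a torsion-free finite-index subgroup of \(\DeltaTri(2,5,\infty)\) exists by Selberg's lemma. I would therefore commit to the invariant trace field argument as the actual proof. The invariant trace field \(k\Gamma\) is a commensurability invariant, and it is unchanged by conjugation in \(\PSL_2(\R)\). We have \(k\PSL_2(\Z)=\Q\), while \(k\DeltaTri(2,5,\infty)\ni\frac{-1+\sqrt5}{2}\). This contradiction rules out both strata. The main obstacle is only citing the commensurability invariance of \(k\Gamma\) correctly, which is a standard result in the theory of arithmetic Fuchsian groups.
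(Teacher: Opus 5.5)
Your argument is correct and has the same structure as the paper's proof. Propositions~\ref{prop:q22-arith} and~\ref{prop:q211-arith} make every Veech surface in these two strata arithmetic, so it suffices to know that \(\DeltaTri(2,5,\infty)\) is not commensurable with \(\PSL_2(\Z)\). The paper simply cites this from Hubert--Schmidt, while you prove it with the invariant trace field, using \(\operatorname{tr}(R^2)=(-1\pm\sqrt5)/2\notin\mathbb{Q}\). You were right to discard the order-five torsion argument, since torsion-free finite-index subgroups of \(\DeltaTri(2,5,\infty)\) exist; leave it out of the final write-up.
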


\begin{proof}
  By Propositions~\ref{prop:q22-arith} and~\ref{prop:q211-arith},
  every Veech surface in either stratum is
  arithmetic.  The Hecke triangle group
  \(\DeltaTri(2,5,\infty)\) is nonarithmetic
  \cite[Section~1.4]{HubertSchmidt2001}, so it cannot be the projective
  Veech group in either stratum.
\end{proof}

\subsection{The stratum
  \texorpdfstring{\(\cQ_2(1,1,1,1)\)}{Q(1,1,1,1)}}

\begin{proposition}\label{prop:q1111-eq}
  There is no half-translation surface
  \((X,q)\in\cQ_2(1,1,1,1)\) such that
  \[
    \Gamma(X,q)=\DeltaTri(2,5,\infty).
  \]
\end{proposition}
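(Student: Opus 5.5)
The plan is to follow the pattern of Proposition~\ref{prop:q1111-mod}, replacing the order-three elliptic element by an order-five one. The point is that \(5\) is coprime to every possible value of \(|K_Y|\) from Lemma~\ref{lem:kernel}\textup{(ii)}.

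\emph{Step 1: an order-five lift on the orientation cover.} Suppose \((X,q)\in\cQ_2(1,1,1,1)\) satisfies \(\Gamma(X,q)=\DeltaTri(2,5,\infty)\), and let \((Y,\alpha)\) be its canonical orientation cover, with \(g(Y)=5\) and four double zeros \(\Sigma_\alpha=\{P_1,\dots,P_4\}\) by Proposition~\ref{prop:quot-data}. By the lifting relation of Section~\ref{sec:modular},
\[
D\bigl(\Aff^+(Y,\alpha;\iota)\bigr)=\operatorname{pr}^{-1}\bigl(\DeltaTri(2,5,\infty)\bigr).
\]
Choose an elliptic element of order five in \(\DeltaTri(2,5,\infty)\), and let \(R\in\SL_2(\R)\) be its lift of actual order five. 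Set \(\widetilde C:=D^{-1}(\langle R\rangle)\). This gives the exact sequence \(1\to K_Y\to\widetilde C\to\langle R\rangle\to1\), so \(|\widetilde C|=5|K_Y|\). Since \(|K_Y|\in\{2,4,6,12\}\) is prime to \(5\), Cauchy's theorem gives an \(F\in\widetilde C\) of order five with \(F\notin K_Y\). Hence \(DF\) generates \(\langle R\rangle\), and in particular \(F\neq\id_Y\) has order exactly five.

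\emph{Step 2: \(F\) fixes every zero.} The permutation that \(F\) induces on the four-element set \(\Sigma_\alpha\) has order dividing \(5\). A four-element set carries no permutation of order \(5\), so the induced permutation is trivial and \(F\) fixes \(P_1,\dots,P_4\). After an \(\SL_2(\R)\)-normalization, \(DF\in\SO(2)\), so \(F\) is a conformal automorphism of order five with at least four fixed points.

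\emph{Step 3: Riemann--Hurwitz.} Let \(r\geq4\) be the number of fixed points of \(F\) and \(g_0\) the genus of \(Y/\langle F\rangle\). Because \(5\) is prime, every point with nontrivial stabilizer is a fixed point of \(F\) and is totally ramified, contributing \(4\). Riemann--Hurwitz gives
\[
8=2g(Y)-2=5(2g_0-2)+4r .
\]
If \(g_0=0\), then \(4r=18\), which has no integer solution. If \(g_0=1\), then \(r=2<4\). If \(g_0\geq2\), the right-hand side exceeds \(8\). Each case is a contradiction, which proves the proposition.

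The main obstacle is Step~1: one must ensure the chosen \(F\) has order exactly five rather than merely a derivative of order five. This is where the coprimality of \(5\) with \(|K_Y|\) from Lemma~\ref{lem:kernel} is essential. Unlike in the order-three case, no Sylow or centralizer argument involving \(\tau\) is needed. Note also that the local eigenvalue comparison used in Proposition~\ref{prop:q1111-mod} gives no contradiction here, since \(\lambda^3=\zeta\) is solvable among fifth roots of unity. For that reason the final contradiction comes from Riemann--Hurwitz alone.
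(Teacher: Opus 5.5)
Your proposal is correct and matches the paper's proof step for step. The paper also obtains an order-five $F$ by Cauchy's theorem from $|\widetilde C|=5|K_Y|$ with $5\nmid|K_Y|$, notes that $S_4$ has no element of order five, and runs the same Riemann--Hurwitz case analysis $8=5(2g_0-2)+4r$ with $r\geq4$. Your explicit $\SL_2(\R)$-normalization making $F$ conformal, so that Riemann--Hurwitz applies, is a small clarification that the paper leaves implicit.
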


\begin{proof}
  Suppose that such a half-translation surface \((X,q)\) exists.
  Let
  \[
    \pi_{\mathrm{or}}:(Y,\alpha)\longrightarrow(X,q)
  \]
  be the canonical orientation cover, and set
  \[
    K_Y:=\ker\left(
      D:\Aff^+(Y,\alpha;\iota)\longrightarrow\SL_2(\R)
    \right).
  \]
  Lemma~\ref{lem:lift} gives
  \[
    D\bigl(\Aff^+(Y,\alpha;\iota)\bigr)
    =\operatorname{pr}^{-1}\bigl(\DeltaTri(2,5,\infty)\bigr).
  \]
  The inverse image of an elliptic element of projective order five
  contains an element \(R\in\SL_2(\R)\) of actual order five.  Hence
  \[
    \widetilde C:=D^{-1}(\langle R\rangle)
    \subseteq\Aff^+(Y,\alpha;\iota)
  \]
  is a finite group of order \(5|K_Y|\).  By
  Lemma~\ref{lem:kernel}\textup{(ii)}, one has
  \(5\nmid|K_Y|\).  Cauchy's theorem therefore gives an element
  \(F\in\widetilde C\) of order five whose derivative is nontrivial.
  Replacing \(F\) by a suitable power, we may assume that
  \(DF=R\).

  The four double zeros of \(\alpha\) form an \(F\)-invariant set.
  Since a permutation of four points has no element of order five,
  \(F\) fixes all four zeros individually.  Let \(r\) be the number of fixed points of \(F\) and
  let \(g_0\) be the genus of \(Y/\langle F\rangle\).  Since
  \(g(Y)=5\), Riemann--Hurwitz gives
  \[
    8
    =5(2g_0-2)+4r.
  \]
  Here \(r\geq4\).  If \(g_0=0\), the equation gives
  \(4r=18\); if \(g_0=1\), it gives \(r=2\); and if \(g_0\geq2\),
  the right-hand side is at least \(10+16>8\).  All possibilities are
  impossible.
\end{proof}

\begin{proof}[Proof of Theorem~\ref{thm:main}]
  The area bound is Corollary~\ref{cor:bound}.  In the equality case, Lemma~\ref{lem:small-orb} shows that
  \(\Gamma(X,q)\) is conjugate in \(\PSL_2(\R)\) to
  \(\DeltaTri(2,5,\infty)\).  After applying a suitable element of
  \(\SL_2(\R)\), we may assume that
  \[
  \Gamma(X,q)=\DeltaTri(2,5,\infty).
  \]
  Proposition~\ref{prop:sq-eq} identifies the square equality case
  with the \(\GL_2^+(\R)\)-orbit of the double-pentagon translation
  surface.  Propositions~\ref{prop:arith-eq} and
  \ref{prop:q1111-eq} exclude all nonsquare strata.
  
Conversely, by the computation of Earle and Gardiner \cite{EarleGardiner1997}, the
double-pentagon translation surface has projective Veech group
\(\DeltaTri(2,5,\infty)\), and hence its Teichm\"uller curve has
hyperbolic area \(3\pi/5\).
\end{proof}

\appendix

\section{Algebraic proof of Vasilyev's construction}
\label{app:quotients}

The quotient construction originates in Vasilyev's work
\cite[Sections~3--4]{Vasilyev2005}.  For a closed Riemann surface
\(S\), we write \(\C(S)\) for its field of meromorphic functions.
This appendix gives a self-contained algebraic proof, specialized to
genus two, of all the ramification, zero, and equivariance data used
in the main argument.  The algebraic model also describes explicitly
the orientation cover, the Vasilyev quotient, and the induced
involutions. 

Whenever an affine
algebraic model below is singular, the corresponding closed Riemann
surface is understood to be obtained by normalizing and completing
that model.

\subsection{Algebraic normal form}

The classification in Subsection~\ref{sub:nonsquare-strata}
gives the possible zero divisors.  We now choose an algebraic model
that also records their positions relative to the hyperelliptic
involution.

\begin{proposition}
	\label{prop:normal-form}Let \((X,q)\) be a half-translation surface on a closed Riemann
	surface of genus two, and suppose that \(q\) is not a global square.
	There is a Weierstrass point
	\(W_\infty\notin\operatorname{div}(q)\) and an affine plane curve model 
	\begin{equation}\label{eq:x-model}
		y^2=P(x)
	\end{equation}
	of $X$	where \(P\) is squarefree of degree five, \(W_\infty\) is the point at
	infinity, and the hyperelliptic involution is
	\[
	h_X(x,y)=(x,-y).
	\]
	In these coordinates
	\begin{equation}\label{eq:q-model}
		q=R(x)\left(\frac{\mathrm{d}x}{y}\right)^2,
	\end{equation}
where \(R\) is a squarefree polynomial of degree two.
	
	Let
	\[
	D=\gcd(P,R),
	\qquad r=\deg D\in\{0,1,2\}.
	\]
	Then the hyperelliptic positions of the zeros are as follows:
	\begin{center}
		\begin{tabular}{ccl}
			\toprule
			\(r\) & zero pattern of \(q\) & hyperelliptic position of the zeros\\
			\midrule
			\(2\) & \((2,2)\)
			& two distinct Weierstrass points\\
			\(1\) & \((2,1,1)\)
			& one Weierstrass point and one conjugate pair\\
			\(0\) & \((1,1,1,1)\)
			& two conjugate pairs\\
			\bottomrule
		\end{tabular}
	\end{center}
\end{proposition}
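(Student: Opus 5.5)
The argument is a direct normal-form computation on the hyperelliptic model. I will place a Weierstrass point avoiding the zeros of \(q\) at infinity and then read off the zero positions from \(R(x)(\mathrm{d}x/y)^2\).

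\emph{Step 1: choose the model.} Since \(\deg\operatorname{div}(q)=4\), \(q\) vanishes at no more than four points, while \(X\) has six Weierstrass points. So some Weierstrass point \(W_\infty\) is not a zero of \(q\). Let \(x\) be a degree-two meromorphic function with a double pole at \(W_\infty\), that is, a coordinate on \(\mathbb P^1=X/\langle h_X\rangle\) sending the image of \(W_\infty\) to \(\infty\). The other five branch values are then finite and distinct, and \(X\) has a model \(y^2=P(x)\) with \(P\) squarefree of degree five and \(h_X(x,y)=(x,-y)\). The forms \(\mathrm{d}x/y\) and \(x\,\mathrm{d}x/y\) are a basis of holomorphic one-forms. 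By Lemma~\ref{lem:mult}\textup{(i)}, every holomorphic quadratic differential is therefore \(R(x)(\mathrm{d}x/y)^2\) with \(\deg R\le2\).

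\emph{Step 2: compute the divisor.} The form \(\mathrm{d}x/y\) has divisor \(2W_\infty\), and \((\mathrm{d}x/y)^2\) has divisor \(4W_\infty\). A root \(a\) of \(R\) that is not a root of \(P\) gives simple zeros of the function \(x-a\) at the two conjugate points over \(a\). A root \(a\) of \(P\) gives a double zero of \(x-a\) at the Weierstrass point \((a,0)\). The function \(R(x)\) has a pole of order \(2\deg R\) at \(W_\infty\). Since \(q\) does not vanish at \(W_\infty\), we need \(4-2\deg R=0\), so \(\deg R=2\).

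\emph{Step 3: squarefreeness and the case split.} If \(R=c(x-a)^2\), then \(q=\bigl(\sqrt c\,(x-a)\,\mathrm{d}x/y\bigr)^2\) is a square, which is excluded. Hence \(R\) is squarefree with distinct roots \(a_1,a_2\). Let \(r=\deg\gcd(P,R)\).
\begin{itemize}
\item If \(r=2\), both roots are branch values, giving two Weierstrass double zeros and the pattern \((2,2)\). This is nonsquare because \(\operatorname{div}(q)/2=W_1+W_2\) is not canonical: \(h^0(W_1+W_2)=1\) for distinct Weierstrass points.
\item If \(r=1\), one root is a branch value and one is not, giving one Weierstrass double zero and one conjugate pair of simple zeros: the pattern \((2,1,1)\).
\item If \(r=0\), both roots give conjugate pairs, hence four simple zeros: the pattern \((1,1,1,1)\).
\end{itemize}
This yields the table.

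The only delicate points are the local order computation at a Weierstrass point, where \(y\) is a local parameter and \(x-a\sim y^2\), and the vanishing of \(q\) at \(W_\infty\). Neither is a real obstacle.
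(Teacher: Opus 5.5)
Your proposal is correct and follows essentially the same route as the paper. You choose a Weierstrass point off the zero divisor and expand \(q=R(x)(\mathrm{d}x/y)^2\) via Lemma~\ref{lem:mult}\textup{(i)}. You force \(\deg R=2\) from the order \(4-2\deg R\) at infinity, exclude a double root of \(R\) by nonsquareness, and read off the zero pattern from whether each root of \(R\) is a branch value. The differences are cosmetic: you work with the global divisors of \(\mathrm{d}x/y\) and \(x-a\) rather than explicit local coordinates. Your extra check that the \((2,2)\) case is genuinely nonsquare is correct but not needed for the statement.
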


\begin{proof}
	A closed Riemann surface of genus two has six Weierstrass points,
	whereas \(\deg\operatorname{div}(q)=4\).  We may therefore choose a
	Weierstrass point \(W_\infty\) outside the zero divisor of \(q\).
	Putting its image under the hyperelliptic quotient at infinity gives
	\eqref{eq:x-model} with \(P\) squarefree of degree five.
	
	In the model \eqref{eq:x-model}, the space
	\(H^0(X,\Omega_X)\) of holomorphic one-forms has basis
	\[
	\frac{\mathrm{d}x}{y},
	\qquad
	\frac{x\,\mathrm{d}x}{y}.
	\]
	Lemma~\ref{lem:mult} shows that the three symmetric products
	\[
	\left(\frac{\mathrm{d}x}{y}\right)^2,
	\qquad
	x\left(\frac{\mathrm{d}x}{y}\right)^2,
	\qquad
	x^2\left(\frac{\mathrm{d}x}{y}\right)^2
	\]
	form a basis of the space
	\(H^0(X,\Omega_X^{\otimes 2})\) of holomorphic quadratic differentials.
	Hence
	\[
	q=R(x)\left(\frac{\mathrm{d}x}{y}\right)^2
	\]
	for a polynomial \(R\) of degree at most two.  If \(t\) is a local
	coordinate of \(X\) at infinity with \(x=t^{-2}\), then
	\[
	\frac{\mathrm{d}x}{y}=u(t)t^2\,\mathrm{d}t,
	\qquad u(0)\neq0.
	\]
	Thus the order of \(q\) at infinity is \(4-2\deg R\).  By the choice
	of \(W_\infty\), the differential \(q\) is nonzero there, and hence
	\(\deg R=2\).
	
	Every holomorphic one-form has the form
	\[
	(a+bx)\frac{\mathrm{d}x}{y}.
	\]
	It follows that \(q\) is a global square if and only if \(R\) is a
	constant multiple of the square of a linear polynomial.  Since \(q\)
	is nonsquare and \(\deg R=2\), the polynomial \(R\) has two distinct
	roots.
	
	Let \(a_i\) be a zero of \(R\).  We distinguish two cases according
	to whether \(a_i\) is a branch value of the hyperelliptic projection
	\(x:X\to\mathbb P^1\).
	
	First suppose that \(P(a_i)\neq0\).  Then the fiber over \(x=a_i\)
	consists of two distinct points
	\[
	p_i^\pm=(a_i,\pm\sqrt{P(a_i)}),
	\]
	which are exchanged by \(h_X\).  Since \(x\) is unramified at these
	points, \(x-a_i\) is a local coordinate, and
	\(\mathrm{d}x/y\) is nonvanishing there.  Because \(R\) has a simple
	zero at \(a_i\), the differential \(q\) has a simple zero at each of
	\(p_i^+\) and \(p_i^-\).
	
	Now suppose that \(P(a_i)=0\).  Then the fiber over \(x=a_i\) consists
	of a single Weierstrass point \(p_i=(a_i,0)\).  Since \(a_i\) is a
	simple zero of \(P\), there is a local coordinate \(s\) centered at
	\(p_i\) such that
	\[
	x-a_i=s^2,
	\qquad
	y=s\,u(s),
	\qquad
	u(0)\neq0.
	\]
	Consequently,
	\[
	\frac{\mathrm{d}x}{y}=\frac{2}{u(s)}\,\mathrm{d}s
	\]
	is nonvanishing at \(p_i\).  Since
	\(R(x)=(x-a_i)\widetilde R(x)\), with
	\(\widetilde R(a_i)\neq0\), the differential \(q\) has a double zero
	at \(p_i\).
	
	Thus a root of \(R\) which is not a branch value of the hyperelliptic
	projection gives a conjugate pair of simple zeros, whereas a common
	root of \(P\) and \(R\) gives a double zero at the corresponding
	Weierstrass point.  Since \(P\) and \(R\) are squarefree,
	\(r=\deg\gcd(P,R)\) counts their common roots.  The three cases
	\(r=2,1,0\) therefore give the zero patterns \((2,2)\),
	\((2,1,1)\), and \((1,1,1,1)\), respectively.
\end{proof}

\subsection{An algebraic model for the orientation cover and the
	Vasilyev quotient}

Fix the normal form
\eqref{eq:x-model}--\eqref{eq:q-model}.  Write
\begin{equation}\label{eq:factor}
	P=DP_0,
	\qquad
	R=DR_0,
	\qquad
	\gcd(P_0,R_0)=1,
\end{equation}
where \(D=\gcd(P,R)\) and \(r=\deg D\).  

\begin{proposition}
	\label{prop:cover-model}
	Let
	\[
	\pi_{\mathrm{or}}:(Y,\alpha)\longrightarrow(X,q)
	\]
	be the connected canonical orientation double cover.  Then \(Y\) is
	the closed Riemann surface obtained by normalizing and completing the
	affine model
	\begin{equation}\label{eq:y-model}
		y^2=P(x),
		\qquad
		z^2=R(x),
	\end{equation}
	and
	\begin{equation}\label{eq:alpha-model}
		\alpha=z\frac{\mathrm{d}x}{y}.
	\end{equation}
	The deck involution of \(\pi_{\mathrm{or}}\) and the unique lift of
	the hyperelliptic involution preserving \(\alpha\) are respectively
	\[
	\iota(x,y,z)=(x,y,-z),
	\qquad
	\tau(x,y,z)=(x,-y,-z).
	\]
	They satisfy
	\[
	\iota^*\alpha=-\alpha,
	\qquad
	\tau^*\alpha=\alpha,
	\qquad
	\iota\circ\tau=\tau\circ\iota.
	\]
	
	Let
	\[
	\pi_V:Y\longrightarrow Z=Y/\langle\tau\rangle.
	\]
	Then \(Z\) is the closed Riemann surface obtained by normalizing and
	completing the affine model
	\begin{equation}\label{eq:z-model}
		v^2=P_0(x)R_0(x),
	\end{equation}
	where
	\[
	v=\frac{yz}{D(x)}
	\]
	in the meromorphic function field of \(Y\).  Moreover,
	\[
	\alpha=\pi_V^*\omega,
	\qquad
	\omega=R_0(x)\frac{\mathrm{d}x}{v}.
	\]
	The involution \(\iota\) descends to
	\[
	j(x,v)=(x,-v),
	\]
	and \(j^*\omega=-\omega\).  Finally,
	\[
	Z/\langle j\rangle
	\cong
	Y/\langle\iota,\tau\rangle
	\cong
	X/\langle h_X\rangle
	\cong\mathbb P^1.
	\]
	In particular, when \(g(Z)\geq2\), the involution \(j\) is the
	hyperelliptic involution of \(Z\).
\end{proposition}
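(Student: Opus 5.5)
We work throughout in the function field \(\C(Y)\). Let \(Y'\) be the normalization and completion of the model \eqref{eq:y-model}. First we check that the model is irreducible, so that \(Y'\) is connected, and that \(\C(Y')=\C(X)(z)\) with \(z^2=R(x)\) has degree two over \(\C(X)\).

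Irreducibility amounts to \(R\) not being a square in \(\C(X)=\C(x)(\sqrt{P})\). The squares of \(\C(x)^\times\) in \(\C(X)\) are \(\C(x)^{\times2}\cup P\,\C(x)^{\times2}\). Since \(R\) is squarefree of degree two and \(P\) is squarefree of degree five, \(R\) lies in neither class.

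Next we identify \(Y'\) with the orientation cover. On \(Y'\) we have \(\alpha^2=z^2(\mathrm{d}x/y)^2=R(\mathrm{d}x/y)^2=q\) pulled back. So \(Y'\to X\) is a connected double cover carrying a meromorphic square root of \(q\). We must check that \(\alpha\) is holomorphic. Away from the points over zeros of \(q\) and over \(W_\infty\), this is clear because \(\mathrm{d}x/y\) is holomorphic and nonvanishing. At the remaining points, a local computation in a uniformizer shows \(\alpha^2=q\) is holomorphic, hence so is \(\alpha\). Uniqueness of the canonical double cover then identifies \(Y'\) with \(Y\); equivalently, the normalization of \(\lambda^2=q\) is the normalization of \(\C(X)(\sqrt{R})\).

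The formulas for \(\iota\) and \(\tau\) are then immediate:
\begin{itemize}
\item \(\iota=(x,y,-z)\) is the nontrivial deck transformation and negates \(\alpha\);
\item \((x,-y,-z)\) lifts \(h_X\) and preserves \(\alpha=z\,\mathrm{d}x/y\), so it is the map \(\tau\) of Subsection~\ref{sub:vas};
\item \(\iota\) and \(\tau\) commute because they are diagonal sign changes.
\end{itemize}

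For the quotient we compute the invariant field \(\C(Y)^\tau\). It contains \(x\) and \(yz\), and \([\C(Y):\C(x,yz)]=2\). Indeed, \(\C(Y)\) has degree four over \(\C(x)\), with Galois group \((\Z/2)^2\) generated by \(\iota\) and \(\tau\), and \(\C(x,yz)\) is the fixed field of \(\tau\). Set \(v=yz/D\). Then
\[
v^2=\frac{PR}{D^2}=P_0R_0,
\]
so \(\C(Z)=\C(x,v)\) with the relation \eqref{eq:z-model}.

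Now we descend the form and the involution:
\begin{itemize}
\item In \(\C(Y)\) we have \(\alpha=z\,\mathrm{d}x/y=z^2\,\mathrm{d}x/(yz)=R\,\mathrm{d}x/(Dv)=R_0\,\mathrm{d}x/v\). This form is \(\tau\)-invariant and lies in the subfield, so it equals \(\pi_V^*\omega\) with \(\omega=R_0\,\mathrm{d}x/v\).
\item \(\iota\) sends \(v\mapsto -v\) and fixes \(x\), giving \(j(x,v)=(x,-v)\) and \(j^*\omega=-\omega\).
\item The fixed field of \(\langle\iota,\tau\rangle\) is \(\C(x)\), which gives the chain of isomorphisms with \(\mathbb P^1\).
\end{itemize}

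Finally, \(P_0R_0\) is squarefree, since \(P\) and \(R\) are squarefree and \(\gcd(P_0,R_0)=1\). So \(Z\) is hyperelliptic or elliptic with \(j\) the sheet involution, and \(Z/\langle j\rangle\cong\mathbb P^1\). When \(g(Z)\ge 2\), the hyperelliptic involution is the unique involution with rational quotient, so \(j\) is that involution.

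The main obstacle is the identification of the normalized model with the canonical orientation cover, specifically the holomorphy and branching check of \(\alpha\) at points over \(\infty\) and over common roots of \(P\) and \(R\). We would handle this case by case, following the local coordinates used in Proposition~\ref{prop:normal-form}.
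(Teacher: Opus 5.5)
Your proposal is correct and follows essentially the paper's route: you adjoin $z$ to $\C(X)$, check $\alpha^2=\pi_{\mathrm{or}}^*q$, read off $\iota$ and $\tau$ as sign changes, identify $\C(Y)^{\tau}=\C(x,v)$, and descend $\alpha$ and $\iota$. Your square-class irreducibility check and Galois-theoretic fixed-field argument are only minor variants of the paper's ``$q$ nonsquare'' and $\C(Y)=\C(x,v)(y)$ steps, and the ``main obstacle'' you flag is not actually an obstacle: holomorphy of $\alpha$ already follows from $\alpha^2=\pi_{\mathrm{or}}^*q$ being holomorphic, and the identification with the canonical cover is a function-field statement, so no case-by-case local analysis is needed.
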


\begin{proof}
	Adjoin to
	\[
	\C(X)=\C(x,y),
	\qquad
	y^2=P(x),
	\]
	an element \(z\) satisfying \(z^2=R(x)\).  Since \(q\) is not a
	global square, \(R\) is not a square in \(\C(X)\).  Hence
	\[
	\C(Y)=\C(x,y,z)
	\]
	is a quadratic extension of \(\C(X)\), and its associated closed
	Riemann surface is the normalization and completion of
	\eqref{eq:y-model}.  The form
	\[
	\alpha=z\frac{\mathrm{d}x}{y}
	\]
	satisfies \(\alpha^2=\pi_{\mathrm{or}}^*q\).  Thus this is the
	connected canonical orientation cover, with deck involution
	\(\iota(x,y,z)=(x,y,-z)\).
	
	The two lifts of \(h_X(x,y)=(x,-y)\) are
	\[
	(x,y,z)\longmapsto(x,-y,z)
	\quad\text{and}\quad
	(x,y,z)\longmapsto(x,-y,-z).
	\]
	They act on \(\alpha\) with opposite signs.  Therefore the unique lift
	preserving \(\alpha\) is
	\(\tau(x,y,z)=(x,-y,-z)\), and the displayed formulas give
	\(\iota\tau=\tau\iota\).
	
	Set \(v=yz/D(x)\).  Then
	\[
	v^2=\frac{P(x)R(x)}{D(x)^2}=P_0(x)R_0(x).
	\]
	Both \(x\) and \(v\) are fixed by \(\tau\).  Moreover,
	\[
	\C(Y)=\C(x,v)(y),
	\qquad
	y^2=P(x),
	\]
	and \(\tau(y)=-y\).  Hence
	\[
	[\C(Y):\C(x,v)]=2,
	\qquad
	\C(Y)^{\langle\tau\rangle}=\C(x,v).
	\]
	It follows that \(Z=Y/\langle\tau\rangle\) is the normalization and
	completion of \eqref{eq:z-model}.  The one-form
	\[
	\omega=R_0(x)\frac{\mathrm{d}x}{v}
	\]
	satisfies
	\[
	\pi_V^*\omega
	=R_0(x)\frac{\mathrm{d}x}{yz/D(x)}
	=z\frac{\mathrm{d}x}{y}
	=\alpha.
	\]
	Since \(\iota(v)=-v\), the involution \(\iota\) descends to
	\(j(x,v)=(x,-v)\), and \(j^*\omega=-\omega\).  Since \(\iota\) and
	\(\tau\) commute, quotienting first by \(\tau\) and then by the
	induced involution \(j\) is equivalent to quotienting \(Y\) by both
	involutions.  Therefore
	\[
	Z/\langle j\rangle
	\cong
	Y/\langle\iota,\tau\rangle
	\cong
	X/\langle h_X\rangle
	\cong\mathbb P^1.
	\]
\end{proof}

\subsection{Proof of Proposition~\ref{prop:quot-data}}

\begin{proof}[Proof of Proposition~\ref{prop:quot-data}]
	Retain the notation
	\[
	P=DP_0,
	\qquad
	R=DR_0,
	\qquad
	r=\deg D
	\]
	from \eqref{eq:factor}.  By Proposition~\ref{prop:normal-form}, the
	cases \(r=2,1,0\) correspond respectively to
	\[
	\cQ_2^{\mathrm{ns}}(2,2),
	\qquad
	\cQ_2(2,1,1),
	\qquad
	\cQ_2(1,1,1,1).
	\]
	
	The differential \(q\) has \(4-2r\) odd-order zeros.  The orientation
	cover \(\pi_{\mathrm{or}}:Y\to X\) has degree two and is simply
	ramified precisely over these points.  Since \(g(X)=2\),
	Riemann--Hurwitz gives \(g(Y)=5-r\).
	
	The projection \(x:Z\to\mathbb P^1\) also has degree two.  The
	polynomial \(P_0R_0\) is squarefree of degree \(7-2r\), so the branch
	values are its \(7-2r\) finite roots together with infinity.  Hence
	Riemann--Hurwitz gives \(g(Z)=3-r\).
	
	We next determine the branch values of
	\(\pi_V:Y\to Z\).  At infinity, take a local coordinate \(t\) on
	\(X\) with \(x=t^{-2}\).  The two points of \(Y\) above infinity may
	be distinguished by a sign \(\varepsilon\in\{\pm1\}\), and locally
	one may write
	\[
	y=t^{-5}u(t^2),
	\qquad
	z=\varepsilon t^{-2}w(t^2),
	\qquad
	u(0)w(0)\neq0.
	\]
	The involution \(\tau\) sends
	\((t,\varepsilon)\) to \((-t,-\varepsilon)\), and therefore exchanges
	the two points above infinity.  Thus \(\pi_V\) is unramified there.
	
	An affine fixed point of \(\tau\) must satisfy \(y=z=0\), and hence
	lies over a common root \(a\) of \(P\) and \(R\).  Put \(s=x-a\) and
	write
	\[
	P=s\,p(s),
	\qquad
	R=s\,r_1(s),
	\qquad
	p(0)r_1(0)\neq0.
	\]
	On the normalization of \(Y\), the quotient \(z/y\) has the two
	holomorphic values obtained from the two square roots of
	\(r_1(s)/p(s)\).  Thus the affine point \(x=a,\ y=z=0\) separates
	into two points of \(Y\).  On each branch, \(y\) is a local
	coordinate and \(\tau\) acts by \(y\mapsto-y\).  Each of the two
	points is therefore a simple ramification point of \(\pi_V\).
	
	There are no other fixed points of \(\tau\).  Consequently, each
	common root of \(P\) and \(R\) gives two simple ramification points,
	and \(\pi_V\) has exactly \(2r\) such points.
	
	At the same value \(x=a\), the equation
	\[
	v^2=P_0(x)R_0(x)
	\]
	gives two distinct points of \(Z\), because
	\(P_0(a)R_0(a)\neq0\).  These are the corresponding branch values,
	and they are interchanged by \(j\).  Moreover,
	\(\omega=R_0(x)\,\mathrm{d}x/v\) is nonvanishing at both points.
	Thus \(B_V\) consists of \(2r\) regular points.  For \(r=2\), these
	are four points on which \(j\) acts without fixed points; for \(r=1\),
	they are two points interchanged by \(j\); and for \(r=0\), one has
	\(B_V=\varnothing\).
	
	It remains to determine the zeros of \(\omega\).  Let \(b\) be a
	root of \(R_0\).  Since \(b\) is not a root of \(P_0\), the point of
	\(Z\) above \(x=b\) is a branch point of
	\(x:Z\to\mathbb P^1\).  In a local coordinate \(t\) with
	\(x-b=t^2\), the form \(\mathrm{d}x/v\) is nonvanishing, whereas
	\(R_0(x)\) has order two.  Hence \(\omega\) has a double zero at this
	point.
	
	These are all the finite zeros.  At infinity, with \(x=t^{-2}\), the
	functions \(v\) and \(R_0\) have pole orders \(7-2r\) and \(4-2r\),
	respectively, while \(\mathrm{d}x\) has pole order three.  It follows
	that
	\[
	\omega=R_0(x)\frac{\mathrm{d}x}{v}
	\]
	is nonvanishing at infinity.  Thus \(\omega\) has precisely
	\(2-r\) zeros, each of order two.
	
	When \(r=2\), the surface \(Z\) has genus one and \(\omega\) has no
	zeros, so \((Z,\omega)\) is a flat torus.  When \(r=1\), one has
	\((Z,\omega)\in\cH_2(2)\).  When \(r=0\), one has
	\((Z,\omega)\in\cH_3(2,2)\).  Since the zeros of \(\omega\) are fixed
	by \(j\), they are Weierstrass points whenever \(g(Z)\geq2\).
	
	Finally, suppose that \(r=0\).  If \(b\) is a root of \(R\), the two
	points
	\[
	(b,\sqrt{P(b)},0),
	\qquad
	(b,-\sqrt{P(b)},0)
	\]
	are zeros of \(\alpha=z\,\mathrm{d}x/y\), and \(\tau\) exchanges
	them.  Since \(R\) has two roots, \(\tau\) acts on the four zeros of
	\(\alpha\) as a product of two disjoint transpositions.
\end{proof}

\end{document}